\let\eps\varepsilon 
\renewcommand{\epsilon}{\varepsilon}
\def\@cite#1#2{{\normalfont[{\bfseries#1\if@tempswa , #2\fi}]}}
\title{Partitioning two-coloured complete multipartite graphs into monochromatic paths and cycles}
\author{Oliver Schaudt\\
Institut f\"ur Informatik, Universit\"at zu K\"oln, K\"oln, Germany\\
\texttt{schaudto@uni-koeln.de}\\~\\
Maya Stein\thanks{Supported by Fondecyt Regular no.~1140766.}\\
Centro de Modelamiento Matem\'atico, Universidad de Chile,\\ 
Santiago, Chile\\
\texttt{mstein@dim.uchile.cl}}
\begin{document}

\newtheorem{theorem}{Theorem}[section]
\newtheorem{lemma}[theorem]{Lemma}
\newtheorem{observation}[theorem]{Observation}
\newtheorem{corollary}[theorem]{Corollary}
\newtheorem{claim}[theorem]{Claim}
\newtheorem{conjecture}[theorem]{Conjecture}
\newtheorem{problem}[theorem]{Problem}

\maketitle

\begin{abstract}
\noindent 
We show that any complete $k$-partite graph $G$ on $n$ vertices, with $k \ge 3$, whose edges are two-coloured, can be covered with two vertex-disjoint monochromatic paths of distinct colours.
We prove this under the necessary assumption that the largest partition class of $G$ contains at most $n/2$ vertices.
This extends known results for complete and complete bipartite graphs.

Secondly, we show that in the same situation, all but $o(n)$ vertices of the graph can be covered with two vertex-disjoint monochromatic cycles of distinct colours, if colourings close to a split colouring are excluded.
From this we derive that the whole graph, if large enough, may be covered with 14 vertex-disjoint monochromatic cycles.

\noindent \textbf{keywords:} monochromatic path partition, monochromatic cycle partition, two-coloured graph

\noindent \textbf{MSC:} 05C38, 05C55.
\end{abstract}

\section{Introduction}
\subsection{State of the art}

Gerencs\'er and Gy\'arf\'as~\cite{GG67} observed the vertex set of any complete graph whose edges are coloured red and blue\footnote{Note that a colouring is never meant to be a proper colouring in this paper: any assignment of colours red any blue to the edges will do.} can be partitioned into a red and a blue path. This is fairly easy: just take a maximal set $S$ of vertices that span two paths $P_1$, $P_2$, one in each colour, which only meet in one of their endvertices, call this vertex~$x$. One quickly checks that any vertex $v\notin S$ can be used to augment $S$: we can add the edge $xv$ to the path $P_i$ of the same colour, and then go from $v$ on reversely through $P_{3-i}$.
It is a long-standing conjecture that this phenomenon carries over to arbitrarily many colours.

\begin{conjecture}[Gy\'arf\'as~\cite{Gya89}]\label{con:gyarfas}
Let $G$ be a complete graph whose edges are coloured with $r$ colours.
Then $G$ can be partitioned into $r$ monochromatic paths.
\end{conjecture}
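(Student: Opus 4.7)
The plan is to push the augmentation idea from the $r=2$ case as far as possible, and then combine it with a regularity-based analysis of the remaining colour classes. I should say upfront that the statement is a long-standing open conjecture for $r \ge 4$, so what follows is a strategy rather than a complete recipe.

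First I would try the natural greedy setup: let $\mathcal{P} = \{P_1, \dots, P_r\}$ be a collection of vertex-disjoint monochromatic paths in $r$ distinct colours, chosen so that $|V(\mathcal{P})|$ is maximum. If some vertex $v$ is uncovered, one would like to augment by re-routing. The obstruction is that, unlike in the $r=2$ case, the endpoints of the $P_i$ no longer need to share a single common vertex, so the clean move (attach $v$ to the path of its colour at $v$, walk back through the other path from the shared endpoint) breaks down. One can still extract something useful: by iterating local exchanges, one obtains a structural description of any inextendible $\mathcal{P}$ in terms of which colours at $v$ are forced to be ``blocked'' by which endpoints, and this in turn limits the monochromatic neighbourhoods of the uncovered set.

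Next I would pass to a weaker notion of path cover, where each $P_i$ is allowed to be a short ``skeleton'' together with a long segment lifted from a monochromatic connected matching of the Szemer\'edi regularity partition of $G$. After applying the regularity lemma, the reduced multigraph is $r$-edge-coloured, and on each reduced edge a majority colour can be fixed; a \L{}uczak-style argument then lifts each large monochromatic connected matching in the reduced graph to a long monochromatic path of $G$. The target is to show that these $r$ matchings can be chosen to cover all but $o(n)$ vertices of the reduced graph, which reduces the conjecture to a fractional/extremal question about monochromatic connected matching covers of a coloured complete reduced graph. The $o(n)$ leftover vertices are then fed into a pre-reserved absorbing gadget $A_i$ inside each colour class: a short monochromatic path of colour $i$ that can swallow any small vertex set while remaining a monochromatic path.

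The main obstacle is the extremal step, namely showing that the $r$ connected matchings can be chosen simultaneously so that their union covers (almost) everything in the reduced graph. Even for $r = 3$ this required delicate stability arguments, separating colourings close to an ``affine'' or split-type extremal colouring from the generic case; for $r \ge 4$ the combinatorics of $r$ interacting monochromatic components is, as far as I know, still unsolved. I would therefore expect any honest attempt to bifurcate into a stability case (colourings near the construction that makes $r$ paths tight) and a generic case, and the stability case is the one where I would spend the bulk of the work.
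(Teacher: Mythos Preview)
There is nothing to compare your proposal against: in the paper this statement is a \emph{conjecture}, not a theorem, and the paper gives no proof of it. It is stated as background, attributed to Gy\'arf\'as, and immediately described as ``a long-standing conjecture''. The paper's own results (Theorems~\ref{thm:mainresult}, \ref{thm:cycles}, \ref{thm:covering-all}) concern two-coloured multipartite graphs and do not address Conjecture~\ref{con:gyarfas} for general $r$.

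You correctly flag that the statement is open for $r\ge 4$, and your outline is an honest description of the standard toolkit (greedy augmentation, regularity plus \L uczak's connected-matching trick, absorption). But precisely because you identify the genuine bottleneck yourself --- showing that $r$ monochromatic connected matchings can be chosen to cover almost all of the reduced graph --- your write-up is a strategy sketch, not a proof. The augmentation argument you describe for the greedy step does not close; as you note, the shared-endpoint structure that makes $r=2$ work has no clean analogue for $r\ge 3$, and the ``structural description of any inextendible $\mathcal{P}$'' you allude to has not, to anyone's knowledge, been pushed to a contradiction. Likewise, the absorption step presupposes that the extremal/stability analysis succeeds, which is exactly the unresolved part. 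So the proposal should be read as a plausible plan of attack rather than a proof, and labelled as such.
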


A stronger conjecture, replacing paths by cycles, had been put forward by Erd\H os, Gy\'arf\'as and Pyber~\cite{EGP91}, but was recently disproved by Pokrovskiy~\cite{Pok14} for $r\geq 3$. 
(Here, and throughout the paper, a cycle is allowed to consist of a single vertex or an edge, or to be totally empty.)
In the case $r=2$, however, the stronger result with cycles does hold (even with cycles of distinct colours). 
This used to be known as Lehel's conjecture, and was shown for all $n$ by Bessy and Thomass\'e~\cite{BT10}, after having been proved for large values of $n$ by \L uczak, R\"odl and Szemer\'edi~\cite{LRS98} and by Allen~\cite{A08}.

\begin{theorem}[Bessy and Thomass\'e~\cite{BT10}]\label{thm:thomasse}
Let $G$ be a complete graph whose edges are coloured red and blue.
Then $G$ can be partitioned into a red and a blue cycle.
\end{theorem}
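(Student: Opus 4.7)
The plan is to argue by extremality. Among all pairs $(C_R, C_B)$ of vertex-disjoint monochromatic cycles of distinct colours (allowing cycles to be empty, a single vertex, or a single edge, as the statement permits), I would pick one that maximizes $|V(C_R)|+|V(C_B)|$. I want to show that this maximum pair covers all of $V(G)$.

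Suppose instead that $U = V(G)\setminus(V(C_R)\cup V(C_B))$ contains a vertex $v$. The starting observation is a simple insertion lemma: if some red edge $uu'$ of $C_R$ satisfies that both $vu$ and $vu'$ are red, then replacing $uu'$ by the path $u,v,u'$ gives a strictly longer red cycle still disjoint from $C_B$, contradicting maximality. The analogous statement holds for $C_B$. Reading this constraint around each cycle forces a restricted colour pattern on the edges from $v$ to $V(C_R)$ and to $V(C_B)$: essentially, the red edges from $v$ to $C_R$ cannot sit next to red arcs of $C_R$, and symmetrically for $C_B$.

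From here I would attempt an augmentation by exchange: using the restricted colour pattern, together with the edges between $C_R$ and $C_B$, swap a vertex or segment from one cycle into the other so as to create a slot for $v$ in the cycle it can enter, in the spirit of augmenting paths in matching theory but executed along both cycles at once. A cleaner organizing device is to partition $V(C_R)\cup V(C_B)$ into maximal monochromatic arcs and study how $v$'s edges interact with this partition, then run a parity argument to locate an augmentation.

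The main obstacle is precisely this augmentation step. The simple insertion lemma is not sufficient when one of the cycles is short or empty, or when the red and blue edges from $v$ are pathologically arranged; in such configurations one has to restart with a different cycle pair, possibly built around vertices of $U$, and show that some global move always works. Handling this uniformly for every $n$---rather than only asymptotically, as in the earlier work of \L uczak, R\"odl and Szemer\'edi and of Allen---is the hard structural heart of the Bessy--Thomass\'e argument, and I expect it to consume the bulk of the work in any attempt to reprove Theorem~\ref{thm:thomasse}.
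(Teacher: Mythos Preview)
The paper does not contain a proof of Theorem~\ref{thm:thomasse}. It is quoted as a known result of Bessy and Thomass\'e~\cite{BT10} and used as background; no argument for it appears anywhere in the paper. So there is nothing here to compare your proposal against.

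Setting that aside, your proposal is not a proof but a plan, and you yourself identify the gap: the naive extremal argument ``take a maximal pair of disjoint monochromatic cycles and try to insert a leftover vertex'' does not close on its own. The insertion lemma only forbids two consecutive red edges from $v$ into $C_R$ along a red arc, which is far too weak to force an augmentation; and the vague ``exchange'' step you describe --- swapping segments between $C_R$ and $C_B$ to create a slot for $v$ --- is exactly where the difficulty lies and where your sketch stops. The actual Bessy--Thomass\'e proof is not organised around augmenting a maximal cycle pair in this way; it proceeds through a carefully chosen extremal object (a monochromatic cycle together with a Hamiltonian path of the complement colour on the rest, sharing a vertex, with specific maximality and tie-breaking rules) and a short but delicate case analysis exploiting that setup. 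Your outline does not reach any such device, so as written it is a description of the problem rather than a proof.
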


Together with Conlon, the second author showed in~\cite{CS14} that Theorem~\ref{thm:thomasse} literally extends to $2$-local colourings: 
those are colourings with any number of colours, where each vertex is incident with at most two colours.

For arbitrary $r$, the best known bound on the number of vertex-disjoint cycles needed to cover the $r$-coloured complete graph $K_n$ is $100r\log r$,  if $n$ is large, this bound is due to Gy\'arf\'as, Ruszink\'o, S\'ark\"ozy and Szemer\'edi~\cite{GRSS06}.
  For $r=3$, the same authors show in~\cite{GRSS11} that  there is a partition of all but $o(n)$ vertices of $K_{n}$ into 3 or less monochromatic cycles. From this they deduce that  $17$ cycles partition the whole graph. 

\medskip

If one aims for similar results in complete bipartite graphs, it is reasonable to assume these are \emph{balanced}, i.e.~the two partition classes have the same size.
As observed by several authors, an obstruction for partitions of two-coloured balanced complete bipartite graphs into two paths/cycles is a certain type of colouring, which we will now describe. 

For any bipartite graph with partition classes $U$, $V$, call a red/blue colouring of $E(G)$ a \emph{split colouring} if there are partitions $U=A \cup B $ and $V=C \cup D$ such that all edges in $E_G(A,C) \cup E_G(B,D)$ are blue, and all edges in $E_G(A,D) \cup E_G(B,C)$ are red.
A split colouring is \emph{proper} if $\min\{|A|-|C|,|B|-|D|\} \ge 2$.

It is easy to see that a balanced complete bipartite graph with a proper split colouring cannot be partitioned into two monochromatic paths (even if the paths are allowed to have the same colour). That the converse is also true was shown by
Pokrovskiy~\cite{Pok14}, improving an earlier result of Gy\'arf\'as and Lehel~\cite{Gya83, GL73} (they allowed one uncovered vertex).

\begin{theorem}[Pokrovskiy~\cite{Pok14}]\label{thm:pokrovskiy}
Let $G$ be a balanced complete bipartite graph whose edges are coloured red and blue. If the colouring is not a proper split colouring,
then $G$ can be partitioned into a red and a blue path. 
\end{theorem}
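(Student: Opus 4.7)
The natural approach is extremal: take a vertex-disjoint pair $(P_R, P_B)$ of monochromatic paths, with $P_R$ red and $P_B$ blue, that maximizes $|V(P_R)|+|V(P_B)|$, breaking ties by maximizing $\min\{|V(P_R)|,|V(P_B)|\}$ so as to control the degenerate case of an empty path. Write $U,V$ for the two partition classes, each of size $n$, and set $W = V(G) \setminus (V(P_R)\cup V(P_B))$. If $W = \emptyset$ we are done, so assume for contradiction that $W \neq \emptyset$; the goal is then to show that the colouring must be a proper split colouring.

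For any $v \in W$ and any endpoint $u$ of $P_R$ adjacent to $v$ in $G$, the edge $uv$ must be blue, since otherwise we could append $v$ to $P_R$ and contradict maximality. Symmetrically, edges from $v$ to the endpoints of $P_B$ lying on the opposite side of the bipartition are red. I would then enlarge the pool of ``effective endpoints'' of each path by Pósa-type rotations along the monochromatic paths. Writing $P_R = r_1\cdots r_p$, if some chord $r_1 r_j$ is red then $r_{j-1}r_{j-2}\cdots r_1 r_j r_{j+1}\cdots r_p$ is again a red path on the same vertex set, now ending at $r_{j-1}$, so $r_{j-1}v$ is also forced to be blue. Iterating on both paths produces sets $S_R \subseteq V(P_R)$ and $S_B \subseteq V(P_B)$ whose edges to $W$ (in the correct bipartition-direction) are monochromatically forced. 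Next I would incorporate cross-path swaps: if a rotation-endpoint $r$ of $P_R$ has a red edge to an endpoint $b$ of $P_B$, and $b$ can be removed without destroying $P_B$, we can transfer $b$ to $P_R$ and then attach some $v \in W$, again contradicting maximality. Ruling out such swaps yields additional edge-colour constraints between $V(P_R)$ and $V(P_B)$.

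These combined constraints should exhibit a split structure: define $A \subseteq U$ as the set of vertices sending blue edges to a certain ``half'' of $V$ dictated by the rotation/swap analysis, take $B = U\setminus A$, and define $C,D\subseteq V$ analogously. The constraints then force that all blue edges lie in $E_G(A,C) \cup E_G(B,D)$ while all red edges lie in $E_G(A,D)\cup E_G(B,C)$, giving a split colouring. Finally, I would show that since $G$ is balanced and $W$ is nonempty, the split must in fact be proper, i.e.\ $\min\{|A|-|C|,|B|-|D|\}\ge 2$, contradicting the hypothesis that the colouring is not a proper split colouring.

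The main obstacle I anticipate is twofold. First, the rotation/swap bookkeeping is intricate: one must be confident that the catalogue of extension moves considered is exhaustive, so that the leftover constraints genuinely pin down a split structure rather than just ``most of one''. Second, translating local edge-colour constraints at individual uncovered vertices into a \emph{global} bipartition $U=A\cup B$, $V=C\cup D$, and verifying the numerical gap $\min\{|A|-|C|,|B|-|D|\}\ge 2$ required for properness, will demand careful case analysis; corner cases where $P_R$ or $P_B$ is empty, is a single vertex, or lies entirely on one side of the bipartition are particularly likely to require separate treatment.
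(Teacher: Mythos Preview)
This theorem is not proved in the present paper: it is quoted from Pokrovskiy~\cite{Pok14} as a known result and then used as a black box (notably in the proofs of Lemma~\ref{lem:equality-case} and Lemma~\ref{lem:notsofair}). There is therefore no in-paper proof to compare your proposal against.

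As to the proposal itself, you have correctly identified that it is a plan rather than a proof. The extremal setup --- take a maximal disjoint red/blue path pair and analyse extensions and rotations --- is the natural framework and broadly in the spirit of Pokrovskiy's argument. But the step you yourself flag, passing from local colour constraints at rotation-endpoints to a \emph{global} partition $U=A\cup B$, $V=C\cup D$ in which every single edge respects the split pattern, is not a bookkeeping detail; it is essentially the entire content of the theorem. Your current catalogue of moves constrains only edges incident with $W$ and with rotation-endpoints of $P_R$ and $P_B$; it says nothing about the colour of an edge between two interior vertices of the paths, and without further moves those colours are simply undetermined by your argument. So the sentence ``these combined constraints should exhibit a split structure'' is a hope, not a deduction, and the argument as written does not close.
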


It is not difficult to check that the vertices of any balanced complete bipartite graph with a proper split colouring can be partitioned into three monochromatic paths, or cycles.

 Haxell~\cite{Hax97} proved that any $r$-edge coloured balanced complete bipartite graph can be partitioned into $O((r\log r)^2)$ monochromatic cycles, and if $r=3$, then 1695 monochromatic cycles suffice. In~\cite{LSS14} we improve this number to 18.
Finally, we mention that  S\'ark\H ozy~\cite{Sar11} conjectured that any $r$-edge coloured graph $G$ can be partitioned into $r\alpha (G)$ monochromatic cycles: though this is false by Pokrovskiy's counterexample mentioned above, it is asymptotically true as shown by Balogh et al.~\cite{BBGGS}.

\subsection{Our Contribution}

We investigate monochromatic path or cycle partitions into complete multipartite graphs with more than two partition classes. We always assume that none of the partition classes is empty, {e.g.}, a complete tripartite graph is assumed to be non bipartite.
We call a multipartite graph \emph{fair} if no partition class contains more than half of the vertices of the graph. 
Note that a complete bipartite graph is fair if and only if it is balanced, and, more generally, a complete multipartite graph is fair if and only if it admits a Hamilton cycle.

We show an extension of Theorem~\ref{thm:pokrovskiy} to multipartite graphs with more than two partition classes under the necessary restriction of fairness. 
It is interesting that for these graphs,  there is no analogue for the exceptional case of the split colouring.

\begin{theorem}\label{thm:mainresult}
Let $G$ be a fair complete $k$-partite graph, with $k\geq 3$,
whose edges are coloured red and blue. Then $G$ can be partitioned into a red and a blue path. 
\end{theorem}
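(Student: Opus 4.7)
The plan is to run an extremal argument in the spirit of Gerencs\'er and Gy\'arf\'as~\cite{GG67}, while invoking Theorem~\ref{thm:pokrovskiy} as a black box whenever the multipartite graph can be reduced to a balanced complete bipartite one.

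The first step would be to look for a bipartition $V(G)=X\cup Y$ into two unions of partition classes with $|X|=|Y|$ (or differing by at most one). Fairness makes this possible in many parameter regimes, since no $V_i$ exceeds $n/2$. If such a bipartition exists and the induced colouring of the complete bipartite graph $G[X,Y]$ is not a proper split colouring, Theorem~\ref{thm:pokrovskiy} provides the required partition of $V(G)$ into a red and a blue path. Otherwise one enters the harder regime.

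In the harder regime, I would take a vertex-disjoint pair $(P_R,P_B)$ of monochromatic paths of distinct colours that maximises $|V(P_R)|+|V(P_B)|$, assume an uncovered vertex $v$ exists, and try to derive a contradiction by producing a better pair. Fairness guarantees $v$ at least $n/2$ neighbours in $G$, so $v$ has many edges into $V(P_R)\cup V(P_B)$. The routine subcases attempt to extend $P_R$ or $P_B$ by $v$ at an endpoint of the matching colour, or to reroute by swapping $v$ with an interior neighbour on one of the paths while pushing the displaced vertex to the end.

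The main obstacle, as expected, will be the case where both endpoints of some path lie in $v$'s own partition class, so $v$ cannot be attached there directly, \emph{and} the induced bipartite subgraph on every respectful bipartition $X\cup Y$ is a proper split colouring. This is precisely the obstruction that is genuine in the bipartite case of Theorem~\ref{thm:pokrovskiy}, but which, I expect, cannot survive the presence of a third partition class: the edges of $G$ lying strictly inside $X$ or inside $Y$, which exist exactly because $k\geq 3$, allow one to perform a global rotation that destroys the split structure and produces a strictly longer pair of monochromatic paths. The detailed case analysis separating the almost-split colourings from the typical ones, together with the explicit construction of the rotation in each subcase, is where I expect the technical bulk of the proof to lie, and it is ultimately what explains why no split-colouring exception appears in the statement of Theorem~\ref{thm:mainresult}.
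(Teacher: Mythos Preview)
Your proposal is an outline rather than a proof, and the outline diverges from the paper's argument in a way that leaves a genuine gap.

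The paper first reduces to $k=3$ via Lemma~\ref{lem:reductiontotripartite} and then splits according to the sizes $n_1\ge n_2\ge n_3$. Your first step, passing to a balanced bipartition into unions of classes, is exactly what handles the boundary case $n_1=n_2+n_3$: one looks at the complete bipartite graph on $V_1$ versus $V_2\cup V_3$ and invokes Theorem~\ref{thm:pokrovskiy}; if that colouring happens to be split, a single edge inside $V_2\cup V_3$ (which exists because $k\ge 3$) repairs it (Lemma~\ref{lem:equality-case}). The degenerate case $n_1=n_2$, $n_3=1$ is treated similarly (Lemma~\ref{lem:notsofair}). But note that for a tripartite graph a bipartition into unions of classes is balanced only when $n_1=n/2$, so your first step covers nothing beyond this boundary.

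The generic case $n_1\le n_2+n_3-2$ (or $n_1=n_2+n_3-1$ with $n_3>1$) is Lemma~\ref{lem:veryfair}, and its proof is \emph{not} an extremal argument on a longest vertex-disjoint pair of paths. Instead one deletes one vertex from each of $V_1$ and $V_2$, applies induction to the remaining graph to obtain a red path $R$ and a blue path $B$ that \emph{share a common endvertex} $x$, and then reinserts the two deleted vertices through a long case analysis on the locations of $x$, the four path-ends, and a carefully chosen $V_3$-edge on one of the paths (found via Lemma~\ref{lem:23edge-new}). The shared endvertex $x$ is the structural hook that drives every subcase of Section~\ref{sec:uglylemmaproof}: it lets one transfer the tail of one path onto the other at will, and most of the contradictions there are obtained by rerouting through $x$.

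Your maximal vertex-disjoint pair $(P_R,P_B)$ has no such hook. When the uncovered vertex $v$ sees only blue at the ends of $P_R$ and only red at the ends of $P_B$, or sees no endpoint at all because they lie in $v$'s own class, there is no single-step augmentation, and the ``global rotation'' you invoke is precisely the content that is missing. It is not at all clear that the rotation machinery can be made to work without the shared-endvertex invariant; the paper maintains that invariant throughout the induction for a reason, and its case analysis already runs to several pages even with that advantage.
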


It seems plausible that Theorem~\ref{thm:mainresult} can be strengthened such that instead of two paths one can partition $G$ into a path and a cycle.
Indeed, we shall see this is true if we allow for at most one vertex to be uncovered.

\begin{corollary}\label{cor:path+cycle}
Let $G$ be a fair complete $k$-partite graph, with $k\geq 3$,
whose edges are coloured red and blue. Then all but at most one vertex of $G$ can be partitioned into a monochromatic path and a monochromatic cycle of distinct colours. 
\end{corollary}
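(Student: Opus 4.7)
My plan is to invoke Theorem~\ref{thm:mainresult} to obtain a partition of $V(G)$ into a red path $P_r=u_1\cdots u_s$ and a blue path $P_b=w_1\cdots w_t$, and then to transform one of the two paths into a monochromatic cycle of its own colour while losing at most one vertex.

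Because the paper's convention allows a cycle to be empty, a single vertex, or a single edge, the short cases $s\le 2$ and $t\le 2$ are immediate: the short path is itself a monochromatic cycle of its colour, so pairing it with the other (unchanged) path already gives a path-plus-cycle partition in different colours with no uncovered vertex. Hence I may assume $s,t\ge 3$.

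The main step is then to close one of the two paths, possibly after dropping a single endpoint. For $P_b$ there are three candidate closing edges: $w_1w_t$ (loss $0$), $w_1w_{t-1}$ (loss $\{w_t\}$), and $w_2w_t$ (loss $\{w_1\}$); three symmetric red candidates work for $P_r$. If any one of these six candidates exists and has the matching colour, we immediately obtain a path-plus-cycle partition with at most one uncovered vertex. If none of them does, then each of the six pairs is either a non-edge (so its two endpoints share a partition class) or carries the opposite colour, which places strong structural constraints on the endpoints and on the second-to-last vertices of both paths.

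The hard step is to handle this remaining case. I would exploit fairness together with the hypothesis $k\ge 3$ to perform a small local modification of one of the paths---for instance a P\'osa-type rotation of $P_b$ along a blue chord $w_1 w_i$, replacing $P_b$ by $w_{i-1} w_{i-2}\cdots w_1 w_i w_{i+1}\cdots w_t$---and then to retry the candidate-edge test on the rotated pair. Because $G$ has at least three partition classes and every class contains at most $n/2$ vertices, each vertex has many neighbours in each class distinct from its own, which should give enough flexibility to choose a rotation whose new endpoints lie in different classes with an edge of the desired colour between them. In the bipartite setting this flexibility fails, which is precisely why split colourings must be excluded from Theorem~\ref{thm:pokrovskiy}; with $k\ge 3$ no such obstruction remains, and so the rotation should always eventually produce a viable closing edge.
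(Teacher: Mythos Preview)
Your proposal has a genuine gap: the ``hard step'' is only sketched heuristically, not proved. Saying that P\'osa rotations ``should always eventually produce a viable closing edge'' is not an argument; you would need to show that the rotation process terminates in a configuration where one of your six candidate edges exists and carries the right colour, and nothing in the proposal does this. The appeal to fairness and $k\ge 3$ is suggestive, but it does not by itself force any particular closing edge to be present with the correct colour. In particular, the two paths $P_r$ and $P_b$ in your setup are vertex-disjoint, so a wrong-coloured closing edge on $P_b$ cannot be absorbed into $P_r$, and you have no mechanism for transferring vertices between the two paths.

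The paper avoids this difficulty by starting from a strictly stronger decomposition than the one you invoke. Lemmas~\ref{lem:equality-case}, \ref{lem:veryfair} and~\ref{lem:notsofair} actually yield a red path $R$ and a blue path $B$ that \emph{share a common endvertex} $x$, not merely two disjoint paths as in the statement of Theorem~\ref{thm:mainresult}. Among all such shared-endpoint partitions the paper chooses one with $R$ of maximum length. The shared vertex $x$ lets one play edges between the last vertices of $R$ and the first vertices of $B$ against each other, and maximality of $R$ forces the colours of several of these cross edges; a very short case analysis (about ten lines) then closes one of the paths into a cycle, losing at most one vertex. If you want to salvage your approach, the natural fix is to go back to the lemmas rather than to Theorem~\ref{thm:mainresult} and exploit the shared endpoint.
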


We prove Theorem~\ref{thm:mainresult} and Corollary~\ref{cor:path+cycle} in Section~\ref{sec:paths}. The main tool for this proof is Lemma~\ref{lem:veryfair}, which is shown in Section~\ref{sec:uglylemmaproof}.

\medskip

It is natural to ask whether Theorems~\ref{thm:pokrovskiy} and~\ref{thm:mainresult} extend to cycle partitions instead of path partitions.
Note that we need to exclude the situation that there is a proper split colouring between a partition class that contains half the vertices of the graph, and the rest of the graph. Clearly, in that case a partition into two monochromatic cycles cannot exist (while a partition into two monochromatic paths is possible, by Theorem~\ref{thm:mainresult}).

We show an approximate result for the cycle partition problem in complete multipartite graphs, including the bipartite case.
For this, we say that a colouring of the edges of a complete multipartite graph~$G$ is {\it $\delta$-close} to a split colouring if by deleting at most $\delta |E(G)|$ edges we can make $G$ bipartite and the colouring a split colouring.

\begin{theorem}\label{thm:cycles}
For all $\delta>0$ there is a an $n_0$ such that the following holds for every  fair complete $k$-partite graph $G$ on $n>n_0$ vertices, with $k\geq 2$.\\ 
If the edges of $G$ are coloured red and blue, and the colouring is not $\delta$-close to a split colouring, then there are two disjoint monochromatic cycles of distinct colours,  which together cover all but at most $\delta n$ vertices of $G$.
\end{theorem}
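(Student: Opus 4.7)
The plan is to follow the regularity-based strategy for approximate monochromatic cycle partitions pioneered by \L uczak, R\"odl and Szemer\'edi. First I would apply a two-coloured Szemer\'edi regularity lemma to $G$ with parameters $\eps \ll d \ll \delta$, obtaining a near-equipartition $V_0, V_1, \ldots, V_m$ and a reduced graph $R$ on $\{1,\ldots,m\}$ whose edges correspond to $\eps$-regular pairs of density at least $d$, each coloured by the majority colour on that pair. Since $G$ is complete $k$-partite, within-class pairs have density zero, so after discarding a negligible number of exceptional clusters $R$ becomes a (near-)complete $k$-partite graph whose class sizes are proportional to those of $G$; in particular, $R$ inherits the fairness of $G$.

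The heart of the proof is a structural dichotomy for the reduced graph: either (i)~$R$ contains a red connected matching $M_r$ and a blue connected matching $M_b$, vertex-disjoint and together saturating all but a $(\delta/3)$-fraction of $V(R)$, or (ii)~the colouring of $R$ is close to a split colouring in a quantitative sense, in which case lifting the two bipartitions through the regularity partition shows that the colouring of $G$ is $\delta$-close to a split colouring, contradicting the hypothesis. To establish~(i), I would examine the largest red and largest blue monochromatic components of $R$: by fairness of $R$, at least one colour must admit a component touching almost all clusters (otherwise we are already close to a split), and inside these components I would extract vertex-disjoint connected matchings using path-partition arguments in the spirit of Theorem~\ref{thm:mainresult} and Lemma~\ref{lem:veryfair}. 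The $k=2$ and $k\geq 3$ cases require slightly different accounting for the split obstruction, but the matching-extraction step is essentially uniform in $k$.

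The last step converts the two connected matchings into two vertex-disjoint monochromatic cycles of the required colours in $G$ using now-standard regularity technology: each edge $V_iV_j \in M_c$ yields a long $c$-coloured path covering almost all of $V_i \cup V_j$, the connectivity of $M_c$ in its monochromatic component of $R$ allows one to stitch these paths together through short monochromatic connecting paths, and one then closes up, absorbing all but a few residual vertices of each cluster. Doing this once per colour produces the desired pair of cycles, with uncovered vertices coming only from $V_0$ and the per-pair slack, totalling at most $\delta n$. The main obstacle is the dichotomy itself: one must prove, with sharp enough quantitative dependence, that the failure of~(i) in a fair two-coloured complete multipartite graph forces the colouring to be close to a split colouring, and that the various error parameters introduced by regularity translate back so that the $\delta$-closeness assumption on $G$ is exactly what needs to be excluded. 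All other ingredients are by now routine.
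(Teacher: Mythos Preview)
Your proposal is correct and follows the same regularity--connected-matching--blow-up strategy as the paper. Two implementation points differ and are worth noting. First, the paper does not work with a $k$-partite reduced graph for arbitrary $k$: it applies Lemma~\ref{lem:reductiontotripartite} \emph{before} regularity to pass to a spanning fair tripartite or bipartite subgraph $G'$ (deleting a tiny third class if present), so that the reduced graph has at most three parts and the matching lemma needs to be proved only for $k\in\{2,3\}$. Second, the paper does \emph{not} extract the connected matchings via the path-partition results Theorem~\ref{thm:mainresult} or Lemma~\ref{lem:veryfair}; it explicitly remarks that robustifying those proofs would be hard, and instead proves standalone robust connected-matching lemmas (Lemmas~\ref{lem:conn-matchings-robust} and~\ref{lem:conn-matchings-robust-for-bip}) for near-complete tripartite and bipartite graphs with minimum-degree conditions. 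These lemmas already contain the dichotomy you describe: in the bipartite case, failure to find the two matchings forces a split colouring of the reduced graph, which the paper then lifts back to show $G$ is $\delta$-close to split. Your identification of the dichotomy as the crux is exactly right; the paper's contribution is to give short, self-contained proofs of it that avoid the lengthy case analysis of Section~\ref{sec:uglylemmaproof}.
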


It is easy to check that no colouring of a complete multipartite graph on $n$ vertices, with at least three partition classes of size greater than $2\sqrt{\delta} n$, can be $\delta$-close to a split colouring.
So, for these graphs we can drop the condition on the colouring in Theorem~\ref{thm:cycles}.
Also, notice that any complete multipartite graph on $n$ vertices, with a colouring that is $\delta$-close to a split colouring, contains  three disjoint monochromatic cycles that together cover all but at most $8\sqrt \delta n$ vertices of the graph.\footnote{In fact, in the graph induced by the edges of the split colouring, we can delete a balanced set of at most $2\sqrt \delta n$ vertices so that in the remaining graph $H$, each vertex has at most $\sqrt\delta n$ non-neighbours in the other partition class. Then we split $H$ into three monochromatic balanced bipartite graphs, two in blue, and one in red. It is easy to check that
each of these three graphs has a cycle covering all but at most $2 \sqrt \delta n$ of its vertices.}

We prove Theorem~\ref{thm:cycles} in Section~\ref{sec:cycles}. The strategy uses the regularity method, and a well-known technique due to \L uczak for blowing up connected matchings of the reduced graph to cycles in the original graph. The existence of the connected matchings in the given circumstances is shown in Lemma~\ref{lem:conn-matchings-robust} and Lemma~\ref{lem:conn-matchings-robust-for-bip} of Section~\ref{sec:matchings}.

Theorem~\ref{thm:cycles} is probably not the best possible result. It might be possible to cover all but a constant number of vertices of our multipartite graph. 
For an open problem in this direction, and more discussion, see Section~\ref{sec:conclu}.

\medskip

Using tools of Gy\'{a}rf\'{a}s, ~\cite{GRSS06b} and Haxell~\cite{Hax97}, we  derive from Theorem~\ref{thm:cycles} that a small finite number of monochromatic cycles is always sufficient to partition a multipartite graph.

\begin{theorem}\label{thm:covering-all}
Let $G$ be a sufficiently large fair complete $k$-partite graph, whose edges are coloured red and blue. 
Then $G$ can be partitioned into 14 monochromatic cycles. 
If $k=2$, then $G$ can be partitioned into 12 monochromatic cycles.
\end{theorem}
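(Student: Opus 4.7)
The plan is to combine Theorem~\ref{thm:cycles} with absorption-type arguments due to Gy\'arf\'as~\cite{GRSS06b} and Haxell~\cite{Hax97} to pass from an almost-partition of $V(G)$ into two or three monochromatic cycles to a genuine partition into boundedly many monochromatic cycles. The key point is that absorbing an $o(n)$-sized leftover costs only a constant (and small) number of additional cycles.

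First I pick a sufficiently small $\delta=\delta(k)>0$. Since $G$ is large, fair and $2$-edge-coloured, a Ramsey/pigeonhole argument across any two partition classes yields a large monochromatic complete bipartite subgraph $H\subseteq G$. I reserve $V(H)$ and apply Theorem~\ref{thm:cycles} to the complete multipartite graph $G'=G\setminus V(H)$, which remains fair provided $|V(H)|$ is a small enough fraction of $n$. Two subcases arise. If the induced colouring on $G'$ is not $\delta$-close to a split colouring, Theorem~\ref{thm:cycles} gives two disjoint monochromatic cycles of distinct colours missing at most $\delta n$ vertices. Otherwise, the three-cycle construction from the footnote following Theorem~\ref{thm:cycles} gives three such cycles missing at most $8\sqrt{\delta}\,n$ vertices. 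In either subcase, call the uncovered set $R$.

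It then remains to cover $R\cup V(H)$ by a bounded number of monochromatic cycles. Each $v\in R$ has many neighbours in $H$, so a Gy\'arf\'as-style majority-colour splicing inserts almost every such $v$ into a single monochromatic cycle running through $H$: one locates two consecutive vertices of that cycle whose edges to $v$ share the colour of $H$, and detours through $v$. The sublinear residue of vertices that fail to be absorbed, together with the leftover portion of $V(H)$, induces a $2$-edge-coloured complete multipartite (or complete bipartite, when $k=2$) subgraph of bounded size, which can therefore be partitioned into $O(1)$ monochromatic cycles by Haxell's theorem~\cite{Hax97}, sharpened in the bipartite subcase via~\cite{LSS14}. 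A careful tally across the subcases then yields the bound of $14$ cycles for $k\geq 3$ and $12$ cycles for $k=2$.

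The main obstacle will be the delicate bookkeeping needed to meet the advertised bounds: every absorption step risks generating an extra cycle, and the split-close subcase already consumes three cycles at the outset, leaving very little slack. The improvement from $14$ to $12$ in the balanced bipartite case comes precisely from using the specialised bipartite cycle-partition tools of Haxell~\cite{Hax97} (and~\cite{LSS14}) in place of the coarser general-multipartite machinery.
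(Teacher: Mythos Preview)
Your high-level plan---reserve an absorbing structure, apply Theorem~\ref{thm:cycles} to the rest, then absorb the leftover---matches the paper's strategy. However, several of the concrete steps you describe would not work as stated.

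\textbf{The absorber is the wrong object.} A Ramsey/pigeonhole argument between two partition classes gives a monochromatic complete bipartite subgraph of size only $O(\log n)$, which is far too small to absorb a leftover $R$ of size $\Theta(\delta n)$. What the paper actually does (via Lemma~\ref{lem:eps-hamiltonian}) is take the majority colour between two classes and extract from it a \emph{linear-size} monochromatic $\tfrac18$-hamiltonian bipartite subgraph $H_i$: not complete bipartite, but robust enough that after deleting a small balanced set it still has a hamilton cycle. This is the key absorbing gadget, and it is not obtainable by Ramsey.

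\textbf{One absorber is not enough when $k\ge 3$.} If $H$ sits between $V_1$ and $V_2$, then leftover vertices in $V_1$ see only one side of $H$ and leftover vertices in $V_3$ see both; parity and fairness become unmanageable. The paper reserves \emph{three} absorbers $H_1,H_2,H_3$, one between each pair of classes, and also needs a genuine case split (the ``super-fair'' case versus $G$ being essentially bipartite) to ensure $G-\bigcup V(H_i)$ remains fair. Your claim that $G\setminus V(H)$ ``remains fair provided $|V(H)|$ is a small enough fraction of $n$'' is false without such a case analysis.

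\textbf{The absorption and finishing steps are misdescribed.} The absorption is not a splicing of individual vertices into a cycle through $H$; it is Lemma~\ref{lem:somewhat-unbalanced} (the $K_{n,m}$ result of Gy\'arf\'as et al.), which covers the small side of a very unbalanced complete bipartite graph with $2r=4$ monochromatic cycles---applied twice, giving $8$ cycles. After that, each $H_i$ has lost only a tiny balanced set, so its $\tfrac18$-hamiltonicity yields a \emph{single} hamilton cycle; the ``leftover portion of $V(H)$'' is of linear size, not bounded size, and Haxell's theorem is not used here at all. The counts are then $2+1+8+3=14$ (tripartite super-fair case), $3+8+1=12$ (bipartite), and $1+3+8+1=13$ (tripartite but essentially bipartite).
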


We prove Theorem~\ref{thm:covering-all} in Section~\ref{sec:exact-cycles}.
We believe that probably, the number of cycles can be dropped further, but the point of our result is rather that a reasonable finite number of cycles always suffices. 
\medskip

We end the introduction with a useful lemma, which tells us that for all our results for fair $k$-partite graphs with $k\geq 3$, we may restrict our attention to the tripartite case.

\begin{lemma}\label{lem:reductiontotripartite}
Every fair $k$-partite graph $G$ with $k \ge 3$ has a spanning induced subgraph which is fair and tripartite.
\end{lemma}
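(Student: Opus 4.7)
The plan is to induct on $k$. For $k=3$ there is nothing to do, since $G$ itself is already a fair tripartite spanning induced subgraph. For $k\ge 4$ the idea is to merge the two smallest partition classes of $G$ into a single class, producing a fair complete $(k-1)$-partite graph on the same vertex set, and then to iterate until only three classes remain.

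Label the classes $V_1,\dots,V_k$ so that $|V_1|\ge|V_2|\ge\cdots\ge|V_k|$, and put $n=|V(G)|$. The key inequality I would verify is $|V_{k-1}|+|V_k|\le n/2$. Suppose it fails. Since $k\ge 4$, the index sets $\{1,2\}$ and $\{k-1,k\}$ are disjoint, and the monotonicity of the sequence gives $|V_1|\ge|V_{k-1}|$ and $|V_2|\ge|V_k|$, hence $|V_1|+|V_2|\ge|V_{k-1}|+|V_k|>n/2$. Adding this to $|V_{k-1}|+|V_k|>n/2$ yields $|V_1|+|V_2|+|V_{k-1}|+|V_k|>n$, which contradicts $\sum_i|V_i|=n$.

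With the inequality in hand, I would amalgamate $V_{k-1}$ and $V_k$ into one class. The resulting complete $(k-1)$-partite graph $G'$ on the classes $V_1,\dots,V_{k-2},V_{k-1}\cup V_k$ shares the vertex set of $G$, and fairness is preserved because the new merged class has size at most $n/2$ by the inequality while the remaining classes are untouched. The induction hypothesis applied to $G'$ (equivalently, iterating the merging step until only three classes remain) then delivers the required fair tripartite spanning induced subgraph of $G$.

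There is essentially no obstacle here: the whole argument reduces to the one-line averaging bound on the two smallest class sizes, and the induction is otherwise purely mechanical. The only point worth stating carefully is that $k\ge 4$ is exactly what makes the index pairs $\{1,2\}$ and $\{k-1,k\}$ disjoint, which is what forces the two-extreme-pairs contradiction to go through.
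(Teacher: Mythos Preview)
Your proof is correct and follows the same approach as the paper: merge the two smallest classes, argue fairness is preserved, and induct. The only cosmetic difference is that the paper compares the two smallest classes against the third and fourth smallest (rather than the two largest) to derive the contradiction, but either disjoint pair works identically.
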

\begin{proof}
Assume $k \ge 4$ and delete all edges between the smallest two partition classes. If the resulting graph is not fair, then these two classes together contain more than $|V(G)|/2$ vertices. But then, also the third and the fourth smallest class together have more than $|V(G)|/2$ vertices, a contradiction. Inductively, the statement follows.
\end{proof}

\section{Proof of Theorem~\ref{thm:mainresult}}\label{sec:paths}

Throughout this section, let $G=(V,E)$ be a fair complete $k$-partite graph on $n$ vertices whose edges are coloured red and blue.
Let $V_1, \ldots , V_k$ be the partition classes of $G$ and let $n_i = |V_i|$ for $i = 1,\ldots,k$, with $n_1 \ge \ldots \ge n_k$.
Our aim is to partition $G$ into a red and a blue path.
By Lemma~\ref{lem:reductiontotripartite}, we may assume $k=3$.

\begin{lemma}\label{lem:equality-case}
If  $n_1=n/2$, then $G$ can be partitioned into a red and a blue path sharing a common endvertex.
\end{lemma}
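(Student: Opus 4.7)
The plan is to exploit the hypothesis $n_1=n/2$, which makes the induced spanning subgraph $H:=G[V_1,V_2\cup V_3]$ a balanced complete bipartite graph on $n$ vertices, and reduce to Theorem~\ref{thm:pokrovskiy}. The edges of $G$ missing from $H$ are precisely those in $E_G(V_2,V_3)$; the hypothesis $k=3$ guarantees $V_2,V_3\neq\emptyset$, so these bridging edges exist.

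I would first handle the case where the colouring induced on $H$ is not a proper split colouring. Theorem~\ref{thm:pokrovskiy} then furnishes a partition $V(G)=V(Q_R)\cup V(Q_B)$ into a red path $Q_R$ and a blue path $Q_B$, vertex-disjoint. To convert this into the required pair sharing an endvertex, I would apply the following parity argument in $H$: each of $Q_R,Q_B$ alternates between $V_1$ and $W:=V_2\cup V_3$, so the difference between the numbers of its vertices on the two sides is at most one. Using $|V_1|=|W|=n/2$ together with $|V(Q_R)\cap V_1|+|V(Q_B)\cap V_1|=n/2$, one checks that exactly two of the four endpoints of $Q_R,Q_B$ lie in $V_1$ and two in $W$. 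In particular, there is an endpoint $p$ of $Q_R$ and an endpoint $q$ of $Q_B$ on opposite sides of $H$, so $pq\in E(G)$. If $pq$ is red, extending $Q_R$ along $pq$ yields a red path $P_R$ with new endvertex $q$, and setting $P_B:=Q_B$ gives the required pair sharing endvertex $q$. If $pq$ is blue, the symmetric construction (extend $Q_B$ along $pq$, keep $P_R:=Q_R$) yields the pair sharing endvertex $p$. In both cases the pair covers $V(G)$ and intersects in exactly the shared endvertex.

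The harder case is when the colouring of $H$ is a proper split colouring, say with $V_1=A\cup B$ and $V_2\cup V_3=C\cup D$ satisfying $\min\{|A|-|C|,|B|-|D|\}\ge 2$. Here Pokrovskiy's theorem does not produce a partition and one must construct $(P_R,P_B)$ directly, making essential use of edges in $E_G(V_2,V_3)$. I would split into subcases according to how $V_2$ and $V_3$ meet $C$ and $D$ (writing $C_i:=C\cap V_i$, $D_i:=D\cap V_i$ for $i=2,3$) and according to the colours of the relevant $V_2$-$V_3$ edges. The rough idea is to construct a red path on $A\cup D$ via the red $A$-$D$ edges (which form $K_{|A|,|D|}$), extended across a single red bridging edge in $E_G(V_2,V_3)$ to absorb one vertex of $B\cup C$, and a blue path on the rest built from the blue $A$-$C$ and $B$-$D$ edges together with a blue bridging edge when needed; the two paths are then glued at an endvertex lying on one of the bridging edges.

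The main obstacle I anticipate is this second case: the explicit construction requires a careful enumeration of subconfigurations of $C_2,C_3,D_2,D_3$ and of the colours on $E_G(V_2,V_3)$, and the proper split condition $|A|-|C|,|B|-|D|\ge 2$ has to be used to guarantee that the monochromatic Hamilton-path-like pieces in each ``rectangle'' actually exist and cover everything. The first case is by contrast essentially routine, being a clean combination of Theorem~\ref{thm:pokrovskiy} with the parity-based merging described above.
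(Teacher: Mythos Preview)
Your non-split case is essentially the paper's: apply Theorem~\ref{thm:pokrovskiy}, then merge the two paths at an endvertex. The paper calls the merging ``straightforward''; your parity count is a clean justification of it.

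For the split case the paper avoids all case analysis, and uses neither the ``proper'' hypothesis nor any subdivision of $C$ and $D$ according to $V_2,V_3$. The key observation is that, since $C\cup D=V_2\cup V_3$ with $V_2,V_3$ both nonempty, there is always an edge $uv\in E_G(C,D)$, and any such edge links the two monochromatic components of its colour in $H$. Say $uv$ is blue with $u\in C$, $v\in D$. Let $P_u$ be a longest blue path of even order in $H$ starting at $u$ (so $P_u\subseteq A\cup C$), and $P_v$ likewise from $v$ (so $P_v\subseteq B\cup D$); concatenate via $uv$ to obtain a blue path $P$. Because $H$ is balanced one has $|A|\le|C|\iff |D|\le|B|$, so $P$ swallows all of $A\cup D$ or all of $B\cup C$; in either case $H[V\setminus V(P)]$ is a balanced complete bipartite graph whose edges are all red, hence has a red Hamilton path, and one merges endvertices as in your first case.

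Your rough plan for the split case --- a red path on $A\cup D$ plus one absorbed vertex of $B\cup C$, blue path on the rest --- is not quite right as stated: when $|A|$ and $|D|$ are far apart, a path in the red $K_{|A|,|D|}$ leaves many vertices of the larger side uncovered, and absorbing them forces the blue path to visit both blue components $A\cup C$ and $B\cup D$, so you need a blue $C$--$D$ bridge after all. The paper simply starts from such a bridge, and that is what collapses the subcase enumeration you were anticipating.
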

\begin{proof}
Consider the balanced complete bipartite graph $H=(V,E_G(V_1,V_2 \cup V_3))$ whose partition classes are $V_1$ and $V_2 \cup V_3$.
By Theorem~\ref{thm:pokrovskiy}, $H$ and thus $G$ can be partitioned into a red and a blue path if the edges of $H$ are not split-coloured.
So, we may assume that the edges of $H$ are coloured with a split colouring, that is, there are disjoint non-empty sets $A,B \subseteq V_1$ and $C,D \subseteq V_2 \cup V_3$ with $A \cup B = V_1$ and $C \cup D = V_2 \cup V_3$, such that $E_G(A,C) \cup E_G(B,D)$ is entirely coloured blue, and $E_G(A,D) \cup E_G(B,C)$ is entirely coloured red.

Since $V_2, V_3\neq\emptyset$ we have that $E_G(C,D) \neq \emptyset$.
Let $uv \in E_G(C,D)$, say $uv$ is blue.
Let $P_u$ ($P_v$) be a blue path in $H$, of maximum even length starting in~$u$ (in $v$). 
Let $P$ be the blue path $P_u uv P_v$.
If $|A| \le |C|$, then $|D| \le |B|$, and thus $A \cup D \subseteq V(P)$.
Otherwise, $B \cup C \subseteq V(P)$.
In both cases $H[V \setminus V(P)]$ is a balanced complete bipartite graph with red edges only.
Hence, $G$ can be partitioned into a red and a blue path.

It is straightforward that there is an edge joining an endvertex of the red path to an endvertex of the blue path.
\end{proof}

The following is the main tool for our proof of Theorem~\ref{thm:mainresult}. 
\begin{lemma}\label{lem:veryfair}
Assume that $n_1 \le n_2 + n_3 - 2$, or that $n_1=n_2+n_3-1$ and $n_3>1$.
Then $G$ can be partitioned into a red and a blue path sharing a common endvertex.
\end{lemma}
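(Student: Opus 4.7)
The plan is to argue by contradiction from an extremal \emph{good pair}: a red path $P_R$ and a blue path $P_B$ meeting in exactly one vertex $x$ which is an endvertex of each. Choose such a pair with $|V(P_R) \cup V(P_B)|$ maximum and, subject to this, with $\bigl| |V(P_R)| - |V(P_B)| \bigr|$ minimum. Let $r$ and $b$ denote the non-shared endvertices of $P_R$ and $P_B$. If $V(P_R) \cup V(P_B) = V(G)$ we are done; otherwise fix an uncovered vertex $v$ and aim for a contradiction.

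The core of the proof is a catalogue of augmentation and re-routing moves in the style of Gerencs\'er--Gy\'arf\'as, adapted to the complete tripartite setting. The direct move is: if $vr$ is red (forcing $v$ and $r$ into distinct partition classes) extend $P_R$ by $v$; analogously for $vb$ blue. When a direct extension is blocked by the ``wrong'' colour or by a class coincidence, one reroutes using the penultimate vertices of $P_R$ or $P_B$, possibly combined with a shift of the shared endvertex $x$ one step into one of the paths, and using auxiliary vertices that exist thanks to the hypothesis (either the slack $n_1 \le n_2 + n_3 - 2$, or the presence of two distinct $V_3$-vertices when $n_3 \ge 2$). Each successful move either strictly enlarges the covered set, contradicting the primary extremal choice, or keeps its size fixed while strictly improving the balance, contradicting the secondary choice. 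The surviving configurations impose severe colour and partition constraints, and a careful bookkeeping forces every uncovered vertex, together with $r$ and $b$, into a single partition class; since $V_1$ is the largest, this class is $V_1$.

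The final step converts the structural conclusion into a numerical contradiction using the fact that, since $G$ is tripartite, consecutive vertices of $P_R$ or $P_B$ lie in distinct partition classes, so any such path $P$ satisfies $|V_1 \cap V(P)| \le |(V_2 \cup V_3) \cap V(P)| + 1$. Summing over $P_R$ and $P_B$, combining with the conclusion that $V_2 \cup V_3$ is entirely covered, and accounting for the partition class of the shared endvertex $x$, one derives an upper bound on $n_1$ that violates the hypothesis; in the tight case $n_1 = n_2 + n_3 - 1$ the second $V_3$-vertex guaranteed by $n_3 \ge 2$ provides exactly the additional unit of slack needed to close the argument. The main obstacle will be the case analysis of the second step: the many colour/class configurations of $v, r, b, x$ and their path-neighbours must each be resolved by some augmentation, and short paths ($|V(P_R)| \le 2$ or $|V(P_B)| \le 2$) require separate treatment. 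This combinatorial bookkeeping is presumably what justifies relegating the proof to its own dedicated section.
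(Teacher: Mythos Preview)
Your approach differs from the paper's: rather than choosing an extremal good pair and trying to absorb a single uncovered vertex, the paper argues by induction. It removes one vertex $v_1\in V_1$ and one vertex $v_2\in V_2$, applies the lemma (or Lemmas~\ref{lem:equality-case},~\ref{lem:notsofair} for the base cases) to the smaller graph $H=G-\{v_1,v_2\}$, and then carries out a long case analysis to reinsert the two distinguished vertices $v_1,v_2$. The advantage of that route is that one always has a good pair covering \emph{all} of $H$, so only two specific vertices---one from $V_1$, one from $V_2$---remain to be absorbed; this gives the case analysis much more leverage (and allows repeated use of Lemma~\ref{lem:23edge-new}). In your scheme the uncovered set can a priori be large and unstructured, which is why you need an additional global argument at the end.

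That final global step, however, does not close. The path inequality $|V_1\cap V(P)|\le |(V_2\cup V_3)\cap V(P)|+1$ bounds the number of \emph{covered} $V_1$-vertices from \emph{above}: summing over $P_R$ and $P_B$ and using $V_2\cup V_3\subseteq V(P_R)\cup V(P_B)$ yields at best
\[
|V_1\cap(V(P_R)\cup V(P_B))|\ \le\ n_2+n_3+c
\]
for some small constant $c$ depending on the class of $x$. This is perfectly compatible with $n_1\le n_2+n_3-2$ and with there being uncovered $V_1$-vertices; indeed it points in the \emph{same} direction (few covered $V_1$-vertices, hence possibly many uncovered ones). To contradict the hypothesis you would need a \emph{lower} bound on $n_1$, which in turn would require a \emph{lower} bound on $|V_1\cap V(P)|$ in terms of $|(V_2\cup V_3)\cap V(P)|$; but no such bound holds in a tripartite graph, since a path can alternate freely between $V_2$ and $V_3$ without ever visiting $V_1$. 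So the numerical contradiction you announce cannot be obtained from the stated ingredients, and without it the structural conclusion of your second step (even if the unperformed case analysis were granted) does not finish the proof.
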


The proof of Lemma~\ref{lem:veryfair} is the subject of Section~\ref{sec:uglylemmaproof}.

\begin{lemma}\label{lem:notsofair}
Assume that $n_1 = n_{2}$ and $ n_3 = 1$.
Then $G$ can be partitioned into a red and a blue path sharing a common endvertex.
\end{lemma}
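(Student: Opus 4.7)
The plan is to apply Theorem~\ref{thm:pokrovskiy} to the balanced bipartite subgraph $H := G[V_1 \cup V_2]$, and then fit the single vertex $w \in V_3$ into the resulting path partition so that the two paths share a common endvertex. First, since $n_1 = n_2$, the graph $H$ admits no proper split colouring: such a colouring would demand $|V_1| \ge |V_2|+4$ (upon summing $|A|-|C|\ge 2$ and $|B|-|D|\ge 2$), contradicting $n_1=n_2$, and the labellings that swap the colour roles yield the same contradiction. Hence Theorem~\ref{thm:pokrovskiy} partitions $V(H)$ into a red path $P_r$ with endvertices $r_1,r_2$ and a blue path $P_b$ with endvertices $b_1,b_2$ (either of which may degenerate to a single vertex).

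Next I would try to absorb $w$ directly. If there exist indices $i,j$ with $wr_i$ red and $wb_j$ blue, I extend $P_r$ by $w$ at the $r_i$-end and $P_b$ by $w$ at the $b_j$-end; the two resulting monochromatic paths share $w$ as a common endvertex and partition $V(G)$. Otherwise, after relabelling, $wr_1,wr_2$ are blue and $wb_1,wb_2$ are red. In this ``bad'' case I invoke the counting trick from the last paragraph of the proof of Lemma~\ref{lem:equality-case}: a parity argument in $H$, using $|V_1|=|V_2|=n_1$, rules out the possibility that all four endvertices $r_1,r_2,b_1,b_2$ lie in a single partition class of $H$ (which would force $|P_r \cap V_k|+|P_b \cap V_k| = n_1+1$ for some $k$, exceeding $n_1$). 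Consequently some edge $e = r_i b_j$ exists in $H$; according to its colour I merge $P_r$ and $P_b$ into two monochromatic paths $P_r^{\ast},P_b^{\ast}$ sharing a common endvertex $x \in V(H)$.

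The remaining task---and the main obstacle---is to incorporate $w$ without destroying the shared endvertex $x$. After the merger the two free endvertices are $r_{3-i}$ and $b_{3-j}$, and by the bad-case hypothesis the edges $wr_{3-i}$ and $wb_{3-j}$ are of the wrong colours for a simple extension. I would resolve this by a P\'osa-type rotation of the appropriate merged path, using the abundance of internal edges inside $P_r^{\ast}$ or $P_b^{\ast}$ to shift the free endvertex to a vertex $v$ with $wv$ of the required colour. Such a $v$ can always be located because either $w$ has edges of both colours to $V(H) \setminus \{r_1,r_2,b_1,b_2\}$ (providing a suitable rotation target in $R_w$ or $B_w$), or else the neighbourhood of $w$ is entirely monochromatic, in which case the problem collapses to exhibiting a red (resp.\ blue) Hamilton path of $G$ ending at $w$ and pairing it with the trivial single-vertex path in the other colour. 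The delicate part of the proof is verifying the rotation step case by case, in particular for the degenerate configurations where one of $P_r, P_b$ is a single vertex or some of the endvertices coincide with vertices forced by the rotation.
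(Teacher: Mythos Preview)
Your proposal has a genuine gap at the very first step. You argue that $H=G[V_1\cup V_2]$ admits no proper split colouring because summing $|A|-|C|\ge 2$ and $|B|-|D|\ge 2$ would force $|V_1|\ge|V_2|+4$. This reads the paper's definition too literally; the intended condition is on $||A|-|C||$ (which for a balanced graph equals $||B|-|D||$), as is clear both from the sentence preceding Theorem~\ref{thm:pokrovskiy} and from the paper's own proof of the present lemma, each of which treats proper split colourings of \emph{balanced} bipartite graphs as genuine obstructions. Hence Theorem~\ref{thm:pokrovskiy} does not apply unconditionally, and you omit the split-coloured case entirely. The paper handles that case by observing that $z$ supplies a monochromatic bridge between the two components of one colour (say $A\cup D$ and $B\cup C$ in red, via vertices $a\in A$, $b\in B$ with $az,bz$ red); longest balanced red paths from $a$ and from $b$, glued through $z$, leave a single monochromatic blue block that is covered by one blue path.

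Even in the non-split case your route diverges from the paper's and is left incomplete. The paper avoids any rotation: balancedness forces some endvertex $r$ of $R$ and some endvertex $b$ of $B$ to lie in different classes of $H$, so $rb\in E(H)$; a short case analysis on the colours of $rb$ and $rz$ (invoking, if necessary, the penultimate vertex $r'$ of $R$ and the edge $r'z$) then finishes directly. Your alternative---merging the two paths through some $r_ib_j$ and then performing a P\'osa-type rotation to absorb $w$---is only sketched; it is not clear the rotation can be completed while keeping both paths monochromatic and preserving a shared endvertex, and you yourself flag this as the ``delicate part'' without carrying it out. Note also that the negation of your step~2 yields only one of ``all $wr_i$ blue'' or ``all $wb_j$ red'', not both as you assume.
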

\begin{proof}
Say $V_3 = \{z\}$.
Consider the balanced complete bipartite graph $H$ induced by the vertex set $V_1 \cup V_2$. First, assume that the edges of $H$ are not properly split-coloured. Then
by Theorem~\ref{thm:pokrovskiy}, $H$ can be partitioned into a red path $R$ and a blue path $B$.
Clearly, as $H$ is balanced, there are an endvertex $r$ of $R$ and an endvertex $b$ of $B$ which lie in distinct partition classes.
We may assume the edge $rb$ to be blue, the other case is analogous.
If the edge $rz$ is red, we may extend the red path to include $z$. Together with the edge $bz$, this gives the desired partition.
Otherwise, if $rz$ is blue, we may extend the blue path to include $z$. Together with the edge $r'z$, where $r'$ is the second to last vertex on $R$,  this gives the desired partition.

So we may assume that $H$ is properly split coloured. 
That is, there are disjoint non-empty sets $A,B \subseteq V_1$ and $C,D \subseteq V_2$ with $A \cup B = V_1$ and $C \cup D = V_2$, such that $E_G(A,C) \cup E_G(B,D)$ is entirely coloured blue, and $E_G(A,D) \cup E_G(B,C)$ is entirely coloured red. Now, there are two colour-components, either $A\cup D$ and $B\cup C$, or $A\cup C$ and $B\cup D$ which are connected in $G$ via $z$. We treat the case that there are vertices 
Let $a \in A$, $b \in B$ such that the edges $az$, $bz$ are red, all other cases can be treated analogously.

Choose a longest balanced red path $X$ starting in $a$, and a longest balanced red path $Y$ starting in $b$ (where balanced means the path should have an even number of vertices). Take a longest blue path $Z$ covering $G-(V(X)\cup V(Y) \cup \{z\})$.
The latter choice is possible since $A \cup C \subseteq V(X)\cup V(Y)$ or $B \cup D \subseteq V(X)\cup V(Y)$.
Thus, $G$ can be partitioned into the red path $XzY$ and the blue path~$Z$.
\end{proof}

\medskip

We are now ready for the proof of our first main theorem.

\begin{proof}[Proof of Theorem~\ref{thm:mainresult}]
By Lemma~\ref{lem:reductiontotripartite}, it suffices to prove our result for $k=3$. If either $n_1\leq n_2+n_3-2$, or $n_1=n_2+n_3-1$ and $n_3>1$, we may apply Lemma~\ref{lem:veryfair} and are done, so assume otherwise. Then either $n_1=n_2$ and $n_3=1$, in which case we may apply 
Lemma~\ref{lem:notsofair} and are done, or $n_1\geq n_2+n_3$, which we assume from now on. Since $G$ is fair, we actually have $n_1= n_2+n_3$, and are thus in conditions to apply Lemma~\ref{lem:equality-case} to obtain the desired partition.
\end{proof}

\medskip

We also make use of the following simple lemma we shall need for the proof of Lemma~\ref{lem:veryfair} in Section~\ref{sec:uglylemmaproof}.
Since the proof is straightforward, we omit it.

\begin{lemma}\label{lem:23edge-new}
Assume that $n_i \le n_{3-i} + n_3 - 1$ for some $i \in \{1,2\}$, and let $P$ be any Hamilton path in $G$.
Then either $P$ contains an edge $uv \in E_G(V_{3-i},V_3)$, or $n_i = n_{3-i} + n_3 - 1$ and both endvertices of $P$ are in $V_{3-i} \cup V_3$.
\end{lemma}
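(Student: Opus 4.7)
The plan is to use the fact that a Hamilton path $P$ avoiding all edges in $E_G(V_{3-i},V_3)$ forces the set $S := V_{3-i}\cup V_3$ to be an independent set along $P$. Indeed, every edge of $G$ joins vertices of distinct classes, so if no edge of $P$ lies between $V_{3-i}$ and $V_3$, then each edge of $P$ must have at least one endpoint in $V_i$; equivalently, no two consecutive vertices of $P$ lie in $S$. Since an independent set in a path on $n$ vertices has cardinality at most $\lceil n/2\rceil$, I obtain
\[
n_{3-i}+n_3 \;=\; |S| \;\le\; \left\lceil \tfrac{n_1+n_2+n_3}{2} \right\rceil.
\]

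Next, I would feed in the hypothesis $n_i \le n_{3-i}+n_3-1$, which rearranges to $n_{3-i}+n_3 \ge (n+1)/2$. Combining this with the upper bound, I would first rule out the case that $n$ is even: in that case $\lceil n/2\rceil = n/2$, and the two inequalities would squeeze $n_{3-i}+n_3$ between $n/2$ and $(n+1)/2$, impossible for integers. Hence $n$ must be odd, and then the two inequalities force $n_{3-i}+n_3 = (n+1)/2$, which is equivalent to $n_i = n_{3-i}+n_3-1$. This gives the first half of the conclusion.

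For the statement about endvertices, I would invoke the structural fact that the unique independent set of size $\lceil n/2\rceil$ in a path on an odd number $n$ of vertices is precisely the set of vertices in odd positions along the path, and this set necessarily contains both endvertices. Applying this to $S$ yields that both endvertices of $P$ lie in $V_{3-i}\cup V_3$, as required. The only mildly delicate step is handling the parity of $n$ when comparing $n_{3-i}+n_3$ to $\lceil n/2 \rceil$; everything else is routine, which is consistent with the authors' remark that the proof is straightforward and can be omitted.
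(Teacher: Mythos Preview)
Your argument is correct and is precisely the natural one: the paper omits the proof as straightforward, and the independent-set-along-$P$ observation together with the tight count $|S|\le \lceil n/2\rceil$ is exactly the intended reasoning. The parity discussion and the uniqueness of the maximum independent set in an odd path are both fine, so nothing is missing.
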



We end this section with the proof of Corollary~\ref{cor:path+cycle}.

\begin{proof}[Proof of Corollary~\ref{cor:path+cycle}.]
By Lemma~\ref{lem:reductiontotripartite}, we may assume $G$ is tripartite, with partition classes $V_1, V_2, V_3$.
We observe that the lemmas we use to prove Theorem~\ref{thm:mainresult}, Lemmas~\ref{lem:equality-case},~\ref{lem:veryfair}, and~\ref{lem:notsofair}, yield a partition of $G$ into a red and a blue path, say $R$ and $B$, that share their last vertex $x$.
Say among all such partitions, $R$ is chosen of maximum length. 
W.l.o.g.~assume $x\in V_1$.

If any of the two paths $R,B$ is trivial, or has only one edge, we are done. 
Note that we may assume all edges between the first two and  the last two vertices of $R$ to be blue, and all edges between the first two and the last two vertices of $B$ to be red, as otherwise we are done. 
In particular, by maximality of $R$, this implies that the first vertex $v_1$ on $B$ lies in $V_1$. 

First assume the first vertex $w_1$ on $R$  does not lie in $V_1$. 
Then $w_1x$ is blue, and by maximality of $R$, we know that $w_1v_1$ is blue, too. Thus we are done. 

So assume $w_1\in V_1$. 
By maximality of $R$, we know $w_1$ sends a blue edge to the second last vertex on $B$. 
Let $v_2$ be the second vertex on $B$.
Now if $v_2w_1$ is red we find a red cycle and a blue path covering all but one vertex, and if $v_2w_1$  is blue we find a blue cycle and a red path covering all but one vertex.
\end{proof}

\section{Proof of Lemma~\ref{lem:veryfair}}\label{sec:uglylemmaproof}

This section is devoted to the proof of Lemma~\ref{lem:veryfair}. For notational reasons it will be very convenient to now refrain from the assumption that $n_1\geq n_2$. We still keep the convention that $V_3$ is the smallest of the three classes, that is, $n_3\leq\min\{n_1,n_2\}$. We thus have to prove the following statement.\\

\noindent
{\it Assume that $n_i\geq n_3$ for $i\in\{1,2\}$, and that either $n_i \le n_{3-i} + n_3 - 2$, or $n_i=n_{3-i}+n_3-1$ and $n_3>1$.
Then $G$ can be partitioned into a red and a blue path sharing a common endvertex.}\\

The assumptions of the lemma imply that $\min\{n_1,n_2\} \ge 2$.
Let $v_1 \in V_1$ and $v_2 \in V_2$ be arbitrary, let $H=G-\{v_1,v_2\}$, and let $n'_1,n'_2,n'_3$ be the sizes of the partition classes of $H$.
If $n'_1,n'_2,n'_3$ satisfy the statement above, we may apply induction to see that $H$ can be partitioned into a red path $R$ and a blue path $B$ that share a common endvertex.

If $n_1',n_2',n_3'$ violate the statement above, it must be that $\min \{n_1',n_2'\} < n_3'$.
As $n_1'=n_1-1$ and $n_2'=n_2-1$, we may w.l.o.g.~assume that $n_2=n_3$.
So, $n_1 \ge n_2=n_3$.

Let us first discuss the case $n_1=n_2=n_3$.
It must be that $n_1 = 2$, for otherwise $n_3' \le n_1'+n_2'-1$, $n_1' \le n_2'+n_3'-1$ and $n_2'>1$, a contradiction to our assumption that $n_1',n_2',n_3'$ violate the statement.
Thus $n_3' = n_1' + n_2'$ and so we may apply Lemma~\ref{lem:equality-case} to obtain a red path $R$ and a blue path $B$ sharing a common endvertex that partition $H$.

Now assume $n_1 > n_2=n_3$.
Hence, $n_1' \ge n_3' > n_2' \ge 1$.
Again we have $n_1' \le n_2'+n_3'-1$, $n_3' \le n_1'+n_2'-1$.
Since $n_1',n_2',n_3'$ violate the statement, it must be that $n_2'=1$.
As $n_2 \ge n_3$ and $n_2' = n_2-1$, we have $n_2=2$ and $n_3'=n_3=2$.
Therefore $n_1 = 3$ and thus $n_1'=2$.
So, we can apply Lemma~\ref{lem:notsofair} to $H$ and obtain a red path $R$ and a blue path $B$ sharing a common endvertex that partition $H$.

Summing up, we may inductively assume Lemma~\ref{lem:veryfair} to hold for $H$.
Hence, $H$ can be partitioned into a red path $R$ and a blue path $B$, both possibly trivial, that share a common endvertex.
Let $R = (r_1,\ldots,r_s,x)$ and $B = (b_1,\ldots,b_t,x)$ where $R$ and $B$ have only $x$ in common.

Throughout the proof we suppose for contradiction that $G$ cannot be partitioned into a red and a blue path that share a common endvertex.
In several cases treated below we make use of the following simple fact.
Assume we can partition $G$ into a red path $R'$ and a blue path $B'$ such that one of these paths has its endvertices in distinct partition classes.
Then one of $R'$, $B'$ can be extended such that the paths have exactly one vertex in common, namely an endvertex of both paths.
The same holds if an endvertex of one path is in a distinct partition class than an endvertex of the other path.

For the remainder of the proof, we assume w.l.o.g.~that $v_1v_2$ is red.
At this point, we advise the reader to get his coloured pencils ready.

\begin{claim}\label{clm:non-trivial}
Neither $R$ nor $B$ is trivial. (That is $R\neq(x)\neq B$.)
\end{claim}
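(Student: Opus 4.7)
My plan is to prove the claim by contradiction: assuming that $R$ or $B$ is trivial, I will construct a partition of $G$ into a red and a blue path sharing a common endvertex, contradicting the global assumption made just before the claim.

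I will first handle the case $R=(x)$, so that $B=(b_1,\ldots,b_t,x)$ is a blue Hamilton path of $H$. The idea is to combine the red edge $v_1v_2$ with the edges of $G$ connecting $\{v_1,v_2\}$ to $b_1$, splitting into cases on the partition class of $b_1$. If $b_1\in V_i$ for some $i\in\{1,2\}$, then only $v_{3-i}b_1$ is an edge of $G$: if it is blue, I will use the blue path $(v_{3-i},b_1,\ldots,x)$ together with the red path $(v_i,v_{3-i})$, sharing $v_{3-i}$; if it is red, I will use the red path $(b_1,v_{3-i},v_i)$ together with $B$, sharing $b_1$. If instead $b_1\in V_3$, both edges $v_1b_1,v_2b_1$ are present, and the same dichotomy applies: I will either extend $B$ using whichever of them is blue, or, if both are red, use the red path $(v_1,v_2,b_1)$ together with $B$ sharing $b_1$. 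In every subcase this yields a valid partition of $G$, contradicting the global assumption and ruling out $R=(x)$.

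Next, I will handle the case $B=(x)$, where $R=(r_1,\ldots,r_s,x)$ is a red Hamilton path of $H$. Because $v_1v_2$ is red, no blue edge is automatically available, so the argument cannot simply mirror the previous case. I will first try to extend $R$ to a red Hamilton path of $G$ by attaching $v_1$ and $v_2$ to one of its endpoints through the edge $v_1v_2$: if any edge of $G$ from $\{v_1,v_2\}$ to $\{r_1,x\}$ is red, this extension succeeds, and a trivial blue path at the new endpoint completes the partition. Otherwise, every such edge must be blue, and I will exploit those blue edges to build a short nontrivial blue path, together with a small local reshaping of $R$ (absorbing $v_1$ or $v_2$ at a suitable red edge of $R$) so that the resulting red and blue paths share an endvertex; this part requires a finite case analysis on the partition classes of $r_1$ and $x$.

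The hardest part will be the case $B=(x)$: since $v_1v_2$ is fixed to be red, the roles of the two colours are no longer symmetric, so a blue path must be constructed from scratch, and it must be checked that in every combination of partition classes for $r_1$ and $x$ the blue edges incident to $v_1,v_2$ together with a suitable red modification of $R$ indeed yield the desired partition.
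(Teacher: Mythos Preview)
Your treatment of the case $R=(x)$ is correct and essentially matches the paper's proof (which compresses your subcases by assuming without loss of generality that $b_1\notin V_1$ and then examining only the edge $b_1v_1$).

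For $B=(x)$ your plan is too thin to succeed as stated. The mechanism you hint at---``absorbing $v_1$ or $v_2$ at a suitable red edge of $R$''---is not what actually drives the argument: once every edge of $G$ from $\{v_1,v_2\}$ to $\{r_1,x\}$ has been forced blue, nothing guarantees a red edge from $v_1$ or $v_2$ to an interior vertex of $R$ positioned so that $R$ remains a single path after the insertion. The paper proceeds in the opposite direction. After first disposing of the easy subcase $r_1\in V_3$ (and, by the symmetry between the two ends of $R$, also $x\in V_3$), it \emph{extracts} an interior vertex $r_i\in V_3$ from $R$ to serve as the centre of a short blue path such as $(v_1,r_i,v_2)$ or $(v_2,r_i,v_1,r_1)$, and re-routes the remaining red path through a red chord ($r_1x$ in the subcase $x\in V_1$, and $r_2x$ in the subcase $x\in V_2$) whose colour has been pinned down in a preliminary step. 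In the subcase $r_1,x\in V_2$ one further needs Lemma~\ref{lem:23edge-new} to guarantee that some $r_i\in V_3$ has a path-neighbour in $V_1$, so that the re-routed red path ends outside $V_2$ and can be linked to the blue path via the observation stated just before the claim. None of these ingredients---the extraction of a $V_3$-vertex, the red chords $r_1x$ or $r_2x$, or the appeal to Lemma~\ref{lem:23edge-new}---appears in your outline, and without them the ``finite case analysis'' you allude to will not close.
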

\begin{proof}
We first suppose that the path $R$ is trivial, that is, $B$ covers $H$.
W.l.o.g. assume~$b_1 \notin V_1$.
If the edge $b_1v_1$ is red, then $G$ can be partitioned into the red path $(v_2,v_1,b_1)$ and the blue path $(b_1,\ldots,b_t,x)$, a contradiction.
Otherwise, $G$ can be partitioned into the red path $(v_2,v_1)$ and the blue path $(v_1,b_1,\ldots,b_t,x)$, another contradiction.

Now suppose that $B$ is trivial, that is, $R$ covers $H$.
Again, we may w.l.o.g.~assume that $r_1 \notin V_1$.
The edge $r_1v_1$ is blue, since otherwise  the red path $(v_2,v_1,r_1,\ldots,r_s,x)$ covers $G$, a contradiction.

If $r_1 \in V_3$, then $r_1v_2 \in E$.
In this case, however, $G$ is covered by the red path $(v_1,v_2,r_1,\ldots,r_s,x)$, if $r_1v_2$ is red. 
If $r_1v_2$ is blue, $G$ can be partitioned into the red path $(r_2,\ldots,r_s,x)$ and the blue path $(v_1,r_1,v_2)$.
As both cases are contradictory, $r_1 \notin V_3$ and so $r_1 \in V_2$.
By symmetry, $x \notin V_3$.

First we suppose that $x \in V_1$.
Then $xv_2,xr_1 \in E$.
Like above, $xv_2$ must be blue.
But then $r_1x$ is red, for otherwise $G$ can be partitioned into the red path $(r_2,\ldots,r_{s})$ and the blue path $(v_1,r_1,x,v_2)$, a contradiction (because of the observation stated before the claim we are presently proving).
We may pick $i \in \{2,\ldots,s\}$ such that $r_i \in V_3$.
If the edge $v_1r_i$ is red, $G$ can be partitioned into the red path $(v_2,v_1,r_i,\ldots,r_s,x,r_1,\ldots,r_{i-1})$, a contradiction.
Thus, by symmetry, both $v_1r_i$ and $v_2r_i$ are blue.
This means $G$ can be partitioned into the red path $(r_{i+1},\ldots,r_s,x,r_1,\ldots,r_{i-1})$ and the blue path $(v_1,r_i,v_2)$, a contradiction.

So, $x \in V_2$.
Then $v_1x \in E$, and this edge must be blue, as it is interchangeable with $v_1r_1$.
Consider the edge $v_2r_2$: it must be blue, else $G$ can be partitioned into the red path $(v_1,v_2,r_2,\ldots,r_s,x)$ and the blue path $(v_1,r_1)$.
Thus, the edge $r_2x$ is red, since otherwise $G$ can be partitioned into the red path $(r_3,\ldots,r_{s-1}, r_s)$ and the blue path $(v_2,r_2,x,v_1,r_1)$.
But this is a contradiction: if the red path $(r_3,\ldots,r_{s})$ is non-empty, $r_{s} \notin V_2$ and so $v_2r_{s} \in E$.

By Lemma~\ref{lem:23edge-new}, we may pick $i \in \{2,\ldots,s\}$ such that $r_i \in V_3$ and $r_{i-1} \in V_1$ or $r_{i+1} \in V_1$.
Note that we may apply Lemma~\ref{lem:23edge-new} even if $n_i'=n_{3-i}'+n_3'-1$, for some $i \in \{1,2\}$, since $r_1 \in V_2$.
Let us assume that $r_{i-1} \in V_1$, the other case is similar.
If $r_iv_1$ is red, then $G$ can be partitioned into the red path $(v_2,v_1,r_i,\ldots,r_s,x,r_2,\ldots,r_{i-1})$ and the blue path $(r_1)$, a contradiction.
Similarly, $r_iv_2$ cannot be red.
So, both $r_iv_1$ and $r_iv_1$ are blue.
But now $G$ can be partitioned into the red path $(r_{i+1},\ldots,r_s,x,r_2,\ldots,r_{i-1})$ and the blue path $(v_2,r_i,v_1,r_1)$, which is contradictory.
This completes the proof of Claim~\ref{clm:non-trivial}.
\end{proof}

Over the next few claims, we deal with the case that $x \in V_3$.

\begin{claim}\label{xV3-a}
If $x \in V_3$, then $r_1\notin V_3$.
\end{claim}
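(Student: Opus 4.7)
The plan is to argue by contradiction, assuming $r_1 \in V_3$---so both endvertices of $R$ lie in $V_3$---and to exhibit a partition of $G$ into a red and a blue path sharing a common endvertex. First I would show that the four edges $r_1 v_1, r_1 v_2, x v_1, x v_2$ (all of which exist since $r_1, x \in V_3$) must be blue. If $r_1 v_1$ were red, the red path $(v_2, v_1, r_1, r_2, \ldots, r_s, x)$ together with $B$ would partition $G$ with common endvertex $x$; the case $r_1 v_2$ red is symmetric via the red edge $v_1 v_2$. If $x v_1$ were red, the disjoint paths $R' = (r_1, \ldots, r_s, x, v_1, v_2)$ and $B' = B - x$ partition $G$, and since the endvertices $r_1 \in V_3$ and $v_2 \in V_2$ of $R'$ lie in distinct partition classes, the key observation stated before Claim~\ref{clm:non-trivial} produces a partition with a common endvertex; the case $x v_2$ red is analogous.

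Next I would consider the candidate partition given by the blue path $Q = (v_2, r_1, v_1, x, b_t, b_{t-1}, \ldots, b_1)$ and the red path $R - \{r_1, x\} = (r_2, \ldots, r_s)$, noting that $s \ge 2$ because $r_1, x \in V_3$ are non-adjacent. Since $r_2, r_s$ are in-$R$ neighbours of vertices of $V_3$, both lie in $V_1 \cup V_2$, and the key observation yields the required partition unless all four endvertices $r_2, r_s, v_2, b_1$ lie in $V_2$. In this remaining case, $b_1 \in V_2$ forces $v_1 b_1$ to be an edge while $v_2 b_1$ is not. If $v_1 b_1$ is blue, the blue path $(v_2, r_1, v_1, b_1, \ldots, b_t, x)$ together with $R - r_1$ share $x$, a contradiction. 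Otherwise $v_1 b_1$ is red, and I split on the colour of $r_1 b_1$: if red, the red path $(v_2, v_1, b_1, r_1, r_2, \ldots, r_s, x)$ together with $B - b_1$ share $x$; if blue, the blue path $(v_1, r_1, b_1, \ldots, b_t, x, v_2)$ together with $(r_2, \ldots, r_s)$ has blue endvertices $v_1 \in V_1, v_2 \in V_2$ in distinct classes, so the observation finishes.

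The main obstacle is this last step: the natural extension of $B$ through $r_1$ and $x$ produces a candidate partition whose four endvertices all lie in $V_2$, a configuration where the key observation cannot be applied directly. Breaking this tie requires exploiting the edge $r_1 b_1$---which exists precisely because $b_1 \in V_2$---to reroute the blue path through $b_1$ rather than through $x$, with a final case split on the colour of $r_1 b_1$ covering both remaining possibilities.
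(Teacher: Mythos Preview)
Your proof is correct and follows the same opening as the paper: assume $r_1,x\in V_3$, show that the four edges $xv_1,xv_2,r_1v_1,r_1v_2$ are all blue, and then build a long blue path through $x$ and $r_1$ that absorbs $v_1,v_2$, leaving the red path $(r_2,\ldots,r_s)$.

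Where you diverge is in handling the residual configuration. You fix the single blue path $Q=(v_2,r_1,v_1,x,b_t,\ldots,b_1)$ and then need a separate case analysis (on the colour of $v_1b_1$, and further on $r_1b_1$) to deal with the possibility that all four endvertices $r_2,r_s,b_1,v_2$ lie in $V_2$. The paper avoids this entirely with a one-line symmetry: since $v_1\in V_1$ and $v_2\in V_2$, at least one of the two blue paths $(b_1,\ldots,b_t,x,v_1,r_1,v_2)$ and $(b_1,\ldots,b_t,x,v_2,r_1,v_1)$ has its two endpoints in distinct partition classes, so the key observation applies immediately and the proof ends there. Your extra case analysis is sound, but unnecessary once you notice you may swap the roles of $v_1$ and $v_2$ at the tail of the blue path.
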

\begin{proof}
Suppose $x, r_1 \in V_3$. Consider the edges $xv_1$ and $xv_2$. If any of these edges is red, then the respective  path $(r_1,\ldots,r_s,x,v_1,v_2)$ or $(r_1,\ldots,r_s,x,v_2,v_1)$ is red, and together with the blue path $(b_1,\ldots,b_t)$ covers $G$. Thus $xv_1$ and $xv_2$ are blue.

If either of  $r_1v_1$, $r_1v_2$ is  red, we may simply extend $R$ from $r_1$ to $v_1$ and $v_2$. So these two edges are blue, too. Observe that one of the blue paths  $(b_1,\ldots,b_t,x,v_1,r_1,v_2)$,  $(b_1,\ldots,b_t,x,v_2,r_1,v_1)$ has its endpoints in different partition classes. Together with  the red path $(r_2,\ldots,r_s)$, this blue path covers $G$, a contradiction.
\end{proof}

\begin{claim}\label{xV3-b}
Let $i\in\{1,2\}$. If $x \in V_3$, then not both $r_1$ and $r_s$ lie in $V_i$.
\end{claim}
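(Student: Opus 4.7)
The plan is to argue by contradiction: assume $x \in V_3$ and, by swapping $V_1$ and $V_2$ if necessary, that both $r_1, r_s \in V_1$. The main tool throughout will be the observation stated just before Claim~\ref{clm:non-trivial}: any red/blue vertex partition of $G$ into two paths whose four endvertices are not all confined to a single partition class can be modified so that the two paths share exactly one endvertex, contradicting our standing assumption. Hence every partition I build must be checked against this criterion.

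Because $r_1, r_s, v_1$ all lie in $V_1$, the only bridge edges between $\{x, r_1, r_s\}$ and $\{v_1, v_2\}$ are $xv_1$, $xv_2$, $r_1v_2$, and $r_sv_2$. The first step is to force all four of these edges to be blue. For instance, if $xv_1$ were red, then $(r_1, \ldots, r_s, x, v_1, v_2)$ together with the blue path $(b_1, \ldots, b_t)$ partitions $G$, with red endvertices $r_1 \in V_1$ and $v_2 \in V_2$ in different classes, giving a contradiction. Analogous augmentations rule out the remaining three colours: $(v_1, v_2, r_1, \ldots, r_s, x)$ forces $r_1v_2$ to be blue (its endvertices land in $V_1, V_3$); $(r_1, \ldots, r_s, v_2, v_1)$ paired with the full $B$, which still contains $x \in V_3$ as an endvertex, forces $r_sv_2$ to be blue; and $(r_1, \ldots, r_s, x, v_2, v_1)$ (together with the two preceding conclusions) forces $xv_2$ to be blue.

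Once the four bridge edges are all blue, I would thread a single blue path through $B$, $x$, $v_1$, and $v_2$, absorbing $r_1$ or $r_s$ via the newly blue edges $r_1v_2, r_sv_2$, and pair it with the corresponding red subpath of $R$. The natural candidate is $(v_1, x, b_t, b_{t-1}, \ldots, b_1, v_2, r_1)$ when $b_1 v_2$ is blue, paired with the red subpath $(r_2, \ldots, r_s)$; if either $r_2$ or $r_s$ lies outside $V_1$, the observation applies and we are done. A symmetric construction through $r_s$ covers the case when $b_t v_2$ is blue. If instead $b_1 v_2$ (or $b_t v_2$) is red, we get a red extension $(r_1, \ldots, r_s, x, v_2, v_1) \cdot b_1$ or similar whose endvertices again span two classes.

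The hard part will be the residual cases in which every candidate partition produced this way has its four endvertices trapped in $V_1$, i.e.\ when $b_1, b_t \in V_1$ and all colour/class accidents conspire against us. Here I expect to invoke an interior rerouting: since $R$ starts and ends in $V_1$ while being a path in a tripartite graph, it must visit $V_2$ at some interior position (by a parity argument on the partition classes along $R$, or an application of Lemma~\ref{lem:23edge-new} to a suitable reformulation). Splitting $R$ at such a vertex $r_i \in V_2$ and reattaching via a red or blue edge from $r_i$ to $v_1$ or $v_2$ yields a new partition whose endvertices do straddle two classes, triggering the observation. This interior case analysis, in the spirit of Claim~\ref{xV3-a}, is expected to constitute the bulk of the remaining work.
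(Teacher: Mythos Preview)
Your opening moves are correct: $xv_1$, $r_1v_2$, $r_sv_2$ are indeed forced blue exactly as you say. But the step ``$(r_1,\ldots,r_s,x,v_2,v_1)$ forces $xv_2$ to be blue'' does not go through. That red path has both endvertices $r_1,v_1$ in $V_1$, and the accompanying blue path is $(b_1,\ldots,b_t)$; the observation before Claim~\ref{clm:non-trivial} only fires if one of $b_1,b_t$ escapes $V_1$. In the main case the paper actually proves $b_1,b_t\in V_1$, so $xv_2$ is \emph{not} forced blue in general. Consequently your threading paragraph, which leans on $xv_2$ (and on edges like $b_1v_2$ being blue, whereas the paper shows $b_1v_2$ is red once $b_1\in V_1$), does not dispose of the non-trivial cases.

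The more serious gap is in your plan for the hard case. Your proposed rerouting --- find $r_i\in V_2$, split $R$ there, and reattach via $r_iv_1$ or $r_iv_2$ --- does not produce a partition into \emph{one} red and \emph{one} blue path: cutting $R$ at $r_i$ leaves two red pieces, and there is no way to absorb one of them into $B$ without further colour information. What makes the paper's argument work is an extra ingredient you have not anticipated: one first shows that $xr_1$ is \emph{red}, so that $R$ together with $xr_1$ behaves like a red cycle and can be re-entered at any point. Only then does Lemma~\ref{lem:23edge-new} get applied, and it yields not a vertex of $V_2$ but an \emph{edge} $uv$ with $u\in V_3$, $v\in V_2$, which may lie on $R$ \emph{or} on $B$; each location requires its own chain of colour deductions (using, among other things, the established facts $b_1v_2$ red, $xb_1$ blue, $b_tv_2$ red). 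Also note that $R$ need not visit $V_2$ at all --- it could alternate between $V_1$ and $V_3$ --- so the parity argument you allude to does not by itself produce the vertex you want. Your outline is structurally in the right direction, but the hard case needs substantially more than ``split at $r_i\in V_2$''.
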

\begin{proof}
Because of symmetry, it is enough to prove this claim for $i=1$. So for contradiction suppose that $x \in V_3$ and $r_1,r_s\in V_1$.

As above, we see that $xv_1$ and $r_1v_2$ are blue. 
Also, $r_sv_2$ is blue, since otherwise $G$ can be partitioned into the red path $(r_1,\ldots,r_s,v_2,v_1)$ and the blue path $(v_1,x,b_t,\ldots,b_1)$.

We claim that
\begin{equation}\label{b1inV1}
b_1 \in V_1.
\end{equation}
Suppose  $b_1 \notin V_1$. Then $xv_2$ must be blue, since otherwise $G$ can be partitioned into the red path $(r_1,\ldots,r_s,x,v_2,v_1)$ and the blue path $(b_1,\ldots,b_t)$.
As $v_1b_1 \in E$, this edge must be red, for otherwise $G$ can be partitioned into the red path $(r_1,\ldots,r_s)$ and the blue path $(v_2,x,b_t,\ldots,b_1,v_1)$.
Thus, $r_1b_1$ is blue, else $G$ can be partitioned into the red path $(r_s,\ldots,r_1,b_1,v_1,v_2)$ and the blue path $(v_2,x,b_t,\ldots,b_2)$.
Hence, $G$ can be partitioned into the red path $(r_2,\ldots,r_s)$ and the blue path $(v_1,x,b_t,\ldots,b_1,r_1,v_2)$, a contradiction.
This proves~\eqref{b1inV1}.

Hence, the edge $b_1v_2$ exists, and 
\begin{equation}\label{b1v2red}
\text{$b_1v_2$ is red,}
\end{equation}
 since otherwise $G$ can be partitioned into the red path $(r_1,\ldots,r_s)$ and the blue path $(v_1,x,b_t,\ldots,b_1,v_2)$.
If $t=1$, $xb_1$ is blue by definition.
If $t \ge 2$ and $xb_1$ is red, $G$ can be partitioned into the red path $(r_1,\ldots,r_s,x,b_1,v_2,v_1)$ and the blue path $(b_2,\ldots,b_t)$ (note that by~\eqref{b1inV1} we know that $b_2\notin V_1$).
Thus, 
\begin{equation}\label{trueblue}
\text{$xb_1$ is blue.}
\end{equation}

We now show that 
\begin{equation}\label{btinV1}
b_t \in V_1.
\end{equation}
Indeed, suppose otherwise. Then the edge $b_tr_1$ exists.
If $b_tr_1$ is blue, then $G$ can be partitioned into the red path $(r_2,\ldots,r_s)$ and the blue path $(v_1,x,b_1,\ldots,b_t,$ $r_1,v_2)$.
This is contradictory since $v_1 \in V_1$ and $v_2 \in V_2$.

So $b_tr_1$ is  red.
Since $xb_t\in E$, we have $b_t \in V_2$.
Thus, $v_1b_t \in E$, and this edge must be blue: otherwise $G$ can be partitioned into the red path $(x,r_s,\ldots,r_1,b_t,v_1,v_2)$ and the by~\eqref{trueblue} blue path $(x,b_1,\ldots,b_t)$.
Moreover, $xv_2$ must be blue, else $G$ can be partitioned into the red path $(r_1,\ldots,r_s,x,v_2,v_1)$ and the blue path $(v_1,b_t,\ldots,b_1,)$.
But this means $G$ can be partitioned into the red path $(r_1,\ldots,r_s)$ and the blue path $(v_2,x,v_1,b_t,\ldots,b_1)$, a contradiction.
This proves~\eqref{btinV1}.

By~\eqref{btinV1}, the edge $b_tv_2$ exists.
However, this edge cannot be blue, as then $G$ can be partitioned into the red path $(r_s,\ldots,r_1)$ and the blue path $(r_s,v_2,b_t,\ldots,b_1,$ $x,v_1)$.
So, $b_tv_2$ is red.

We next show that
\begin{equation}\label{truered}
\text{$xr_1$ is red.}
\end{equation}

For contradiction, suppose that $xr_1$ is blue.
Then, in particular, $s \ge 2$, since $xr_s$ is red.
The edge $r_2v_1$ must be blue, as otherwise $G$ can be partitioned into the red path $(v_2,v_1,r_2,\ldots,r_s)$ and the blue path $(v_2,r_1,x,b_t,\ldots,b_1)$.
Moreover, $b_tr_2$ is blue, as follows.
Suppose $b_tr_2$ is red.
If $t=1$, $G$ can be partitioned into the red path $(v_1,v_2,b_t,r_2,\ldots,r_s,x)$ and the blue path $(r_1,x)$.
Otherwise, $b_{t-1}$ exists and does not belong to $V_1$.
Thus, $G$ can be partitioned into the red path $(v_1,v_2,b_t,r_2,\ldots,r_s)$ and the blue path $(r_1,x,b_1,\ldots,b_{t-1})$.
As both is contradictory, we see that $b_tr_2$ is blue.
But now the red path $(r_3,\ldots,r_s)$ and the blue path $(r_s,v_2,r_1,x,v_1,r_2,b_t,\ldots,b_1)$ partition $G$, a contradiction. This proves~\eqref{truered}.

We now apply Lemma~\ref{lem:23edge-new} to $H$, 
to see that there is an edge $uv$ in $R$ or $B$ with $u \in V_3$ and $v \in V_2$.
Note that we may apply Lemma~\ref{lem:23edge-new} even if $n_i'=n_{3-i}'+n_3'-1$, for some $i \in \{1,2\}$, since $r_1 \in V_1$ by assumption.
Since $r_s,b_t \in V_1$, we know that $v \notin \{r_s,b_t\}$, and thus $u \neq x$.

We claim that 
\begin{equation}\label{uvareonB}
\text{$u,v \in V(B)$.}
\end{equation}
Indeed, suppose $u,v \in V(R)$.
We assume $v$ lies between $u$ and $x$ on $R$, say $u=r_i$ and $v=r_{i+1}$, the other case is similar.
It must be that $r_iv_2$ is blue, since otherwise $G$ can be partitioned into the red path $(v_1,v_2,r_i,r_{i-1},\ldots,r_1,x,r_s,\ldots,r_{i+1})$ and the blue path $(b_1,\ldots,b_t)$.
Similarly, the edges $r_iv_1$ and $r_{i+1}v_1$ are blue.

Since $r_{i+1} \in V_2$ and $b_1 \in V_1$, $r_{i+1}b_1 \in E$.
If $r_{i+1}b_1$ is red, then $G$ can be partitioned into the red path $(v_1,v_2,b_1,r_{i+1},\ldots,r_s,x,r_1,\ldots,r_i)$ and the blue path $(b_2,\ldots,b_t)$ (recall that $b_1v_2$ is red by~\eqref{b1v2red}).
However, if $r_{i+1}b_1$ is blue, then $G$ can be partitioned into the red path $(r_{i+2},\ldots,r_s,x,r_1,\ldots,r_{i-1})$ and the blue path $(v_2,r_i,v_1,r_{i+1},b_1,\ldots,b_t)$.
This proves~\eqref{uvareonB}.

Say $v$ lies between $u$ and $x$ on $B$, say $u=b_i$ and $v=b_{i-1}$, the other case is similar. We now show that 
\begin{equation}\label{r1biblue}
\text{$r_1b_i$ is blue.}
\end{equation}

Indeed, suppose that the edge $r_1b_i$ is red.
Thus $b_iv_1$ must be blue: otherwise, $G$ can be partitioned into the red path $(v_2,v_1,b_i,r_1,\ldots,r_s)$ and the blue path $(b_{i-1},\ldots,b_1,x,b_t,\ldots,b_{i+1})$. Similarly, $b_iv_2$ must be blue.
Hence $b_{i-1}v_1$ must be red, as otherwise $G$ can be partitioned into the red path $(r_1,\ldots,r_s)$ and the blue path $(v_2,b_i,v_1,b_{i-1},\ldots,b_1,x,b_t,\ldots,b_{i+1})$.
Supposing $b_{i-1}r_1$ is red, $G$ can be partitioned into the red path $(v_2,v_1,b_{i-1},r_1,\ldots,r_s)$ and the blue path $(b_i,\ldots,b_t,x,b_1,\ldots,b_{i-2})$.
Thus, the edge $b_{i-1}r_1$ is blue, and so $G$ can be partitioned into the red path $(r_2,\ldots,r_s)$ and the blue path $(v_1,b_i,v_2,r_1,b_{i-1},\ldots,b_1,$ $x,b_t,\ldots,b_{i+1})$.
This proves~\eqref{r1biblue}.

Now we see that $v_1b_{i-1}$ must be red:
otherwise, $G$ can be partitioned into the red path $(r_2,\ldots,r_s)$ and the blue path $(v_2,r_1,b_i,\ldots,b_t,x,b_1,\ldots,b_{i-1},v_1)$.
Also, the edge $r_1b_{i-1}$ is blue, as otherwise we can cover $G$ with the red path $(v_2,v_1,b_{i-1},r_1,\ldots,r_s)$ and the blue path $(b_i,\ldots,b_t,x,b_1,\ldots,b_{i-2})$. 
Therefore, $b_iv_1$ is red, as otherwise $G$ can be partitioned into the red path $(r_2,\ldots,r_s)$ together with the blue path $(v_2,r_1,b_{i-1},\ldots,b_1,x,b_t,\ldots,b_i,v_1)$.

This implies that $r_sb_{i-1}$ is red, else $G$ could be covered by the red path $(r_2,\ldots,r_{s-1})$ and the blue path $(v_1,x,b_t,\ldots,b_i,r_1,v_2,r_s,b_{i-1},\ldots,b_1)$. But then we can cover $G$ with the red path $(v_1, b_{i-1}, r_s,\ldots, r_2)$ and the blue path $(b_{i-2},\ldots, b_1, x, v_2, r_1, b_i,\ldots, b_t)$ (here we use~\eqref{trueblue} and~\eqref{r1biblue}), giving the final contradiction.
\end{proof}

\begin{claim}\label{xV3-c}
Let $i\in\{1,2\}$. If $x \in V_3$, then not both $r_1\in V_i$ and $r_s\in V_{3-i}$ hold.
\end{claim}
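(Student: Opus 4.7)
The strategy mirrors the one in Claim~\ref{xV3-b}: assume for contradiction that $x \in V_3$, and without loss of generality $r_1 \in V_1$ and $r_s \in V_2$ (the case $r_1 \in V_2$, $r_s \in V_1$ is symmetric, obtained by reversing~$R$ and simultaneously swapping the roles of $v_1$ and $v_2$). The proof proceeds by a sequence of color forcings on the edges incident to $\{v_1,v_2\}$, followed by a call to Lemma~\ref{lem:23edge-new} that produces a contradictory re-routing.

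First I would fix the colors of the edges $xv_1$, $r_1v_2$ and $r_sv_1$. If any one of these is red, the corresponding red path---respectively $(r_1,\ldots,r_s,x,v_1,v_2)$, $(v_1,v_2,r_1,\ldots,r_s,x)$ or $(r_1,\ldots,r_s,v_1,v_2)$---partitions $G$ together with the blue path $(b_1,\ldots,b_t)$, and its endvertices, one in $V_1$ and one in $V_2$ (or the symmetric pair), lie in distinct partition classes; hence each of these three edges must be blue, by the observation stated just before Claim~\ref{clm:non-trivial}. The edge $xv_2$ is more delicate: the obvious red path $(v_1,v_2,x,r_s,\ldots,r_1)$ has both endvertices in~$V_1$, so one cannot conclude directly, and one must first pin down the classes of $b_1$ and $b_t$ and the colors of the edges $b_1v_1, b_1v_2, b_tv_1, b_tv_2, b_1r_1, b_tr_s$. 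This is achieved by running the same chain of forcings as in equations~\eqref{b1inV1}--\eqref{truered} of Claim~\ref{xV3-b}, adjusted to reflect that now $r_1$ and $r_s$ lie in different classes. In particular, $xv_2$ is then forced to be blue as well.

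At this point I would apply Lemma~\ref{lem:23edge-new} to~$H$ to obtain an edge $uv$ of $R\cup B$ with $u \in V_3$ and $v \in V_1 \cup V_2$; the lemma is applicable even in the boundary case $n'_i = n'_{3-i} + n'_3 - 1$, since $r_1 \in V_1$ and $r_s \in V_2$ rule out that both endvertices of the ambient Hamilton path lie in the same pair of classes required by the exceptional clause. A case split on whether $uv$ lies on~$R$ or on~$B$, on which side of $x$ it lies, and on the colors of $uv_1$ and $uv_2$, then yields in each sub-case an explicit red/blue partition of $G$ with cross-class endvertices, contradicting our standing assumption.

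The main obstacle is, as in Claim~\ref{xV3-b}, the sheer volume of sub-cases produced by these choices, compounded by the degenerate situations $s=1$ or $t=1$, where one of the two paths may collapse and the cross-class endvertex argument must be reworked with the auxiliary vertices $v_1, v_2$ playing the role of the missing endvertex. In each case, the asymmetry $r_1 \in V_1$, $r_s \in V_2$ provides an extra re-routing move (going through $r_s v_1$ or through $r_1 v_2$, both known to be blue) that is unavailable in the setting of Claim~\ref{xV3-b}; this is precisely the additional flexibility that allows one to close every sub-case and thereby reach the desired contradiction.
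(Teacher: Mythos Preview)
Your plan has a genuine gap at the Lemma~\ref{lem:23edge-new} step. First, a minor point: the endvertices of the ambient Hamilton path of $H$ are $r_1$ and $b_1$, not $r_1$ and $r_s$, so your argument that the exceptional clause cannot trigger is not correct as stated. More seriously, even when the exceptional clause is avoided, Lemma~\ref{lem:23edge-new} only promises that \emph{some} edge of the path lies in $E(V_{3-i},V_3)$. In Claim~\ref{xV3-b} one had $r_s,b_t\in V_1$, and this is exactly what was used to guarantee that the produced edge $uv$ satisfies $u\neq x$. Here $r_s\in V_2$, so the edge $xr_s$ itself already lies in $E(V_2,V_3)$, and the lemma applied with $i=1$ may simply hand you this trivial edge, giving no new leverage. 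The $i=2$ application cannot evidently be salvaged either: once you pin down $b_1\in V_1$ (as you intend), both endvertices $r_1,b_1$ of the Hamilton path lie in $V_1\cup V_3$, so the exceptional clause can fire.

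The paper does not use Lemma~\ref{lem:23edge-new} in this claim at all. After establishing $r_sv_1$ blue, $r_1r_s$ red, $b_1\in V_1$, $v_2b_1$ red and $r_sb_1$ blue, it observes that the hypothesis of Lemma~\ref{lem:veryfair} forces $n_3\ge 2$, and simply picks a vertex $y\in V_3\setminus\{x\}$. A short re-routing through the red edge $r_1r_s$ shows $y$ cannot lie on $R$; hence $y=b_i$ for some $i$, and three further colour forcings at $b_i$ (on $v_2b_i$, $r_1b_i$) yield the contradiction directly. This is considerably shorter than the Claim~\ref{xV3-b} template and in particular never needs to determine the class of $b_t$ or the colour of $xv_2$.
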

\begin{proof}
For symmetry, we only need to treat the case $i=1$. So suppose 
$r_1\in V_1$ and ${r_s \in V_2}$.
Then 
\begin{equation}\label{rsv1blue}
\text{$r_sv_1$ is blue,}
\end{equation}
$r_2v_1$ is blue, since otherwise $G$ can be partitioned into the red path $(r_1,\ldots,r_s,$ $v_1,v_2)$ and the blue path $(b_1,\ldots,b_t,x)$.
Thus, 
 \begin{equation}\label{r1rsred}
\text{$r_1r_s$ is red,}
\end{equation}
 since otherwise $G$ can be partitioned into the red path $(r_2,\ldots,r_{s-1})$ and the blue path $(v_2,r_1,r_s,v_1,x,b_t,\ldots,b_1)$.

Let us show that
 \begin{equation}\label{againb1inV1}
\text{$b_1 \in V_1$.}
\end{equation}
Suppose otherwise.
Then $r_1b_1 \in E$, and this edge must be red: if it was blue, $G$ could be covered by the red path $(r_2,\ldots,r_s)$ together with the blue path $(r_s,v_1,x,b_t,\ldots,b_1,r_1,v_2)$.
Moreover, $v_1b_1 \in E$, and this edge must be blue, since otherwise $G$ can be partitioned into the red path $(x,r_s,\ldots,r_1,b_1,v_1,v_2)$ and the blue path $(x,b_t,\ldots,b_2)$.
Also, $xv_2$ is blue, else $G$ can be partitioned into the red path $(r_1,\ldots,r_s,x,v_2,v_1)$ and the blue path $(b_1,\ldots,b_t)$.
Now, however, $G$ can be partitioned into the red path $(r_1,\ldots,r_s)$ and the blue path $(v_2,x,b_t,\ldots,b_1,v_1)$, a contradiction. This proves~\eqref{againb1inV1}.

By~\eqref{againb1inV1}, we know that
$v_2b_1 \in E$, and this edge must be red, as otherwise $G$ can be partitioned into the red path $(r_1,\ldots,r_s)$ and the blue path $(r_s,v_1,x,b_t,\ldots,b_1,v_2)$ (recall that $r_sv_1$ is blue by~\eqref{rsv1blue}).
Thus, $r_sb_1$ is blue, else $G$ can be partitioned into the red path $(r_1,\ldots,r_s,b_1,v_2,v_1)$ and the blue path $(v_1,x,b_t,\ldots,b_1)$.

Let $j \in \{1,2\}$ such that $n_j \ge n_{2-j}$.
Hence $n_j \le n_{2-j} + n_3 - 2$, so $n_3 \ge 2$.
Thus, there is a vertex $y \in V_3 \setminus \{x\}$. We show that
 \begin{equation}\label{yinB}
\text{$y \in V(B)$.}
\end{equation}
Suppose otherwise, say $y = r_i$.
Then, $r_iv_2$ is blue, as else $G$ can be partitioned into the red path $(v_1,v_2,r_i,\ldots,r_2,r_1,r_s,\ldots,r_{i+1})$ (recall that $r_1r_s$ is red by~\eqref{r1rsred}) and the blue path $(b_1,\ldots,b_t,x)$.
Similarly, $r_iv_1$ is blue.
But now $G$ can be partitioned into the red path $(r_{i-1},\ldots,r_1,r_s,\ldots,r_{i+1})$ and the blue path $(v_2,r_i,v_1,x,b_t,\ldots,b_1)$. This proves~\eqref{yinB}.

So say $y = b_i$.
Thus, $v_2b_i$ is red, otherwise $G$ can be partitioned into the red path $(r_1,\ldots,r_{s-1})$ and the blue path $(v_2,b_i,\ldots,b_t,x,v_1,r_s,b_1,\ldots,b_{i-1})$.
The edge $r_1b_i$ is red, too, else $G$ can be partitioned into the red path $(r_2,\ldots,r_{s-1})$ and the blue path $(v_2,r_1,b_i,\ldots,b_t,x,v_1,r_s,b_1,\ldots,b_{i-1})$. 
(For this, note that $r_{s-1}\notin V_2$.)
But now $G$ can be partitioned into the red path $(v_2,b_i,r_1,\ldots,r_{s-1})$ and the blue path $(b_{i+1},\ldots,b_t,x,v_1,r_s,b_1,\ldots,b_{i-1})$, a contradiction. This finishes the proof of the claim.
\end{proof}

Putting Claims~\ref{xV3-a}, ~\ref{xV3-b} and~\ref{xV3-c} together, and noting that if $x\in V_3$, then $r_s\notin V_3$, we obtain the following assertion.
\begin{claim}\label{xnotinV3}
$x\notin V_3$.
\end{claim}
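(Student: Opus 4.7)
The plan is to argue by contradiction, assuming $x \in V_3$, and to combine the three preceding claims to derive a contradiction. First I would observe that since $r_s$ is adjacent to $x$ in the multipartite graph $G$ and $x \in V_3$, the vertex $r_s$ must lie in a different partition class from $x$; that is, $r_s \in V_1 \cup V_2$. Next, by Claim~\ref{xV3-a} applied to the free endpoint $r_1$ of $R$, we have $r_1 \notin V_3$, so also $r_1 \in V_1 \cup V_2$. This reduces the situation to four possible cases for the ordered pair of partition classes containing $(r_1, r_s)$.

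The next step is to rule out each of these four cases in turn. Claim~\ref{xV3-b} with $i=1$ forbids $r_1, r_s \in V_1$, and with $i=2$ forbids $r_1, r_s \in V_2$. The remaining two cases, namely $r_1 \in V_1$ with $r_s \in V_2$, and $r_1 \in V_2$ with $r_s \in V_1$, are precisely the configurations excluded by Claim~\ref{xV3-c} (taking $i=1$ and $i=2$ respectively). Since all four possibilities yield a contradiction, our assumption $x \in V_3$ must be false, which is the desired conclusion.

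There is no real obstacle here: Claim~\ref{xnotinV3} is a short bookkeeping step that packages the outcome of Claims~\ref{xV3-a}--\ref{xV3-c} into a single clean statement. The only point requiring a moment of care is that Claims~\ref{xV3-a}--\ref{xV3-c} are phrased asymmetrically with respect to the two endpoints of $R$, so one should not try to apply a symmetric version of Claim~\ref{xV3-a} to $r_s$; instead, the fact that $r_s \notin V_3$ follows directly and for free from the adjacency $r_s x \in E(G)$ together with $x \in V_3$.
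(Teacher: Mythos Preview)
Your argument is correct and matches the paper's own reasoning exactly: combine $r_s\notin V_3$ (from the adjacency $r_sx$) with Claim~\ref{xV3-a} to place both $r_1,r_s$ in $V_1\cup V_2$, and then exhaust the four remaining cases via Claims~\ref{xV3-b} and~\ref{xV3-c}. The paper states this in one sentence rather than spelling out the four cases, but the content is identical.
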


We now turn to the case that $x\notin V_3$.
We first show an auxiliary claim.

\begin{claim}\label{xinV1r1nichtinV3}
Let $i\in\{1,2\}$. If $x\in V_i$ then the edges $xv_{3-i}$, $r_sv_i$ are blue, and $r_1\notin V_{3}$. Furthermore, if $r_1\in V_j$ for some $j\in\{1,2\}$, then $r_1v_{3-j}$ is blue.
\end{claim}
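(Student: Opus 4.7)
The plan is to fix $i=1$ by symmetry (so $x\in V_1$) and, for each of the four assertions, to assume it fails and construct a partition of $V(G)$ into a red path $P_R$ and a blue path $P_B$ in which some endvertex of $P_R$ and some endvertex of $P_B$ lie in distinct partition classes. By the observation stated just before Claim~\ref{clm:non-trivial}, this is enough: it yields a partition into monochromatic paths sharing an endvertex, contradicting the standing hypothesis of the proof. Throughout I will use the elementary fact that $b_t\notin V_1$, which holds because $b_tx$ is an edge of $B$ and $x\in V_1$.

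For (a), if $xv_2$ were red I would take $P_R=(r_1,\ldots,r_s,x,v_2,v_1)$ and $P_B=(b_1,\ldots,b_t)$; the endpoints $v_1\in V_1$ and $b_t\notin V_1$ differ in class. For (b), if $r_sv_1$ were red I would take $P_R=(r_1,\ldots,r_s,v_1,v_2)$ and $P_B=B$, with endpoints $v_2\in V_2$ and $x\in V_1$. For (d), if $r_1\in V_j$ with $r_1v_{3-j}$ red I would take $P_R=(v_j,v_{3-j},r_1,\ldots,r_s,x)$ and $P_B=(b_1,\ldots,b_t)$: the case $j=2$ is settled by the endpoints $v_2$ and $x$, and the case $j=1$ by the endpoints $v_1\in V_1$ and $b_t\notin V_1$.

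The heart of the proof is (c). Assuming $r_1\in V_3$, I would first argue that both $r_1v_1$ and $r_1v_2$ are blue: if $r_1v_1$ were red the red path $(v_2,v_1,r_1,\ldots,r_s,x)$ paired with $P_B=(b_1,\ldots,b_t)$ has endpoints $v_2\in V_2$ and $x\in V_1$, and if $r_1v_2$ were red the red path $(v_1,v_2,r_1,\ldots,r_s,x)$ paired with the same $P_B$ finishes via $v_1\in V_1$ and $b_t\notin V_1$. Given these two blue edges together with $xv_2$ blue from (a), I would then take the blue path $(v_1,r_1,v_2,x,b_t,\ldots,b_1)$ and the red path $(r_2,\ldots,r_s)$, which partition $V(G)$; since $r_sx$ is an edge of $R$ we have $r_s\notin V_1$, so the endvertices $v_1$ and $r_s$ lie in different classes and close the argument.

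The hard part will be this final construction in (c): one has to verify that the assembled blue path is indeed a path (consecutive vertices in different classes and all edges blue), and handle the corner case $s=1$, where the red path $(r_2,\ldots,r_s)$ is empty; in that case the blue path becomes a Hamilton path of $G$, and pairing it with the trivial red path consisting only of the vertex $v_1$ still yields the forbidden partition with shared endvertex $v_1$. All the other steps are routine once the framework of the first paragraph is in place.
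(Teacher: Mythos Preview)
Your proof is correct and follows essentially the same route as the paper's. The key constructions are identical: for $xv_2$ and $r_sv_1$ you use the same red extensions as the paper; for $r_1\notin V_3$ you use the same blue path $(v_1,r_1,v_2,x,b_t,\ldots,b_1)$ paired with the red remainder $(r_2,\ldots,r_s)$. The only differences are organizational---the paper derives (d) first (``neither of $r_1v_1,r_1v_2$, if present, can be red'') and then concludes (c) by noting both cannot be present and blue, whereas you treat (c) as a standalone contradiction and re-derive the blueness of $r_1v_1,r_1v_2$ inside it---and your explicit handling of the corner case $s=1$, which the paper glosses over.
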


\begin{proof}
For symmetry, it is enough to show this claim for $i=1$. So, assume $x\in V_1$.
Then, the edge $xv_2$ is present in $G$.
If $xv_2$ is red, then the red path $(v_1,v_2,x,r_s,\ldots,r_1)$ and the blue path $(b_1,\ldots,b_t)$ together cover $G$. So we know that 
$xv_2$ is blue.

The edge $r_sv_1$ is present in $G$ as $r_s \notin V_1$.
If this edge is red, $G$ can be partitioned into the red path $(r_1,\ldots,r_s,v_1,v_2)$ and the blue path $(x,b_1,\ldots,b_t)$, a contradiction.
Thus, 
$r_sv_1$ is blue.

Similarly, neither of the edges $r_1v_1$ and $r_1v_2$, if present, can be red.
On the other hand, not both $r_1v_1$ and $r_1v_2$ can be present and blue, else $G$ can be partitioned into the red path $(r_2,\ldots,r_s)$ and the blue path $(v_1,r_1,v_2,x,b_t,\ldots,b_1)$.
Thus, either $r_1v_1$ or $r_1v_2$ is absent from $G$, which implies that $r_1 \notin V_3$. This proves the claim.
\end{proof}

\begin{claim}\label{xinV1r1inV2gehtnicht}
Let $i\in\{1,2\}$. If $x\in V_i$ then $r_1\notin V_{3-i}$.
\end{claim}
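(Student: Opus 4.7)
By symmetry it suffices to treat the case $i=1$, so suppose for contradiction that $x \in V_1$ and $r_1 \in V_2$. Claim~\ref{xinV1r1nichtinV3} (with $i=1$ and, in its last sentence, $j=2$) immediately records that $xv_2$, $r_sv_1$ and $r_1v_1$ are all blue, and since $xr_s$ is an edge of $R$ we also have $r_sx$ red, hence $r_s \in V_2 \cup V_3$. My plan is to use these forced blue edges, together with trial repartitions of $G$, to pin down the colours of the edges between $\{v_1, v_2\}$ and the distinguished vertices $r_s$, $b_1$, $b_t$, and then to exhibit an explicit partition of $G$ into a red and a blue path whose endvertices do not all lie in a single partition class. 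By the observation preceding Claim~\ref{clm:non-trivial}, any such partition can then be adjusted to share a common endvertex, which contradicts the standing assumption.

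First I would show that, when $r_s \in V_3$, the edge $r_sv_2$ must be blue: otherwise $(r_1,\ldots,r_s,v_2,v_1)$ is a red path and $(b_1,\ldots,b_t,x)$ a blue path, jointly partitioning $G$ with red endvertices $r_1 \in V_2$ and $v_1 \in V_1$ in distinct classes, which is already a contradiction. Proceeding in the same spirit, I would case-split on the partition class of $b_1$ (and symmetrically on that of $b_t$) and force the colour of each edge joining $\{v_1,v_2\}$ to $\{b_1,b_t\}$, as well as the colours of $r_1b_1$, $r_sb_1$, $r_1b_t$, $r_sb_t$ whenever they exist. Each candidate partition is built by splicing $v_1$ and $v_2$ into $R$ or $B$ through the known blue edges $r_1v_1$, $r_sv_1$, $xv_2$, either using the blue shortcut $(r_1,v_1,r_s)$ to bypass the interior of $R$, or extending $B$ at $x$ and closing up via the red edge $v_1v_2$ to fold in the two deleted vertices.

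Once every relevant edge has been coloured, I would apply Lemma~\ref{lem:23edge-new} to the Hamilton path of $H$ obtained by following $R$ and then the reverse of $B$, to produce an edge $uv$ on $R$ or $B$ with $u \in V_3$ and $v \in V_1$; the exceptional conclusion of the lemma, where both Hamilton-path endvertices lie in $V_1 \cup V_3$, is ruled out by $r_1 \in V_2$, exactly as the authors argue in the corresponding step of Claim~\ref{xV3-b}. A short case analysis on whether $uv$ sits inside $R$ or inside $B$, and on which side of $u$ the vertex $v$ lies, then lets me reroute through $u$ and $v$ (using the blue edges at $v_1$, $v_2$, $r_1$, $r_s$, $x$ established in the previous step) to obtain a partition of $G$ into a red and a blue path whose endvertices lie in distinct partition classes, yielding the desired contradiction.

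The main obstacle will be the sheer volume of cases, mirroring Claims~\ref{xV3-b} and~\ref{xV3-c}: essentially every edge incident to $\{v_1,v_2,r_1,r_s,b_1,b_t,x\}$ must be coloured by a separate repartition argument, and degenerate configurations (such as $b_1 = b_t$, $B$ consisting of a single edge, or $s=1$ so that $r_1=r_s$) must be treated individually to keep every candidate path well-defined. Conceptually little is happening; the work lies in making the bookkeeping airtight so that in each branch we can exhibit an explicit pair of monochromatic paths with mismatched endvertex classes.
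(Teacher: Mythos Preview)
Your plan is essentially the paper's own proof: start from the blue edges $xv_2$, $r_sv_1$, $r_1v_1$ supplied by Claim~\ref{xinV1r1nichtinV3}, case-split on the partition class of $b_1$, force further colours, apply Lemma~\ref{lem:23edge-new} to locate a $V_3$--$V_1$ edge on the Hamilton path (ruling out the exceptional clause via $r_1\in V_2$), and finish with a case analysis on whether that edge lies on $R$ or on $B$. The paper organises the case-split slightly differently---it first rules out $b_1\in V_1$ and $b_1\in V_3$ separately, then pins down $r_s,b_t\in V_2$ and the colours of $r_2b_t$ and $r_2r_s$ before invoking Lemma~\ref{lem:23edge-new}---so the ``short case analysis'' you anticipate is in fact rather long and reaches into edges at $r_2$ that you do not yet list, but the strategy is identical.
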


\begin{proof}
For symmetry, it is enough to show this claim for $i=1$. So, for contradiction, assume $x\in V_1$ and $r_1\in V_{2}$. 
By Claim~\ref{xinV1r1nichtinV3}, the edges $xv_{2}$, $r_sv_1$ and $r_1v_1$ are blue.

We show first that
\begin{equation}\label{b1nichtinV1}
 \text{$b_1 \notin V_1$.}
 \end{equation}
 Suppose otherwise.
Then clearly $r_1b_1$ is red, since otherwise $G$ can be partitioned into the red path $(r_2,\ldots,r_s)$ and the blue path $(v_1,r_1,b_1,\ldots,b_t,x,v_2)$.
Thus the edge $v_2b_1$ is blue, otherwise $G$ can be partitioned into the red path $(v_1,v_2,b_1,r_1,\ldots,r_s)$ and the blue path $(x,b_t,\ldots,b_2)$.
A symmetric argument shows that $r_1x$ is red.
Moreover, $r_sb_1$ is red, for otherwise $G$ can be partitioned into the red path $(r_2,\ldots,r_{s-1})$ and the blue path $(r_1,v_1,r_s,b_1,\ldots,b_t,x,v_2)$.
Also, the edge $b_tv_1$ is red, as otherwise $G$ can be partitioned into the red path $(r_2,\ldots,r_{s-1})$ and the blue path $(r_1,v_1,b_t,\ldots,b_1,v_2,x)$.
Hence, $b_1b_t$ is blue, else $G$ can be partitioned into the red path $(v_2,v_1,b_t,b_1,r_1,\ldots,r_s,x)$ and the blue path $(b_2,\ldots,b_{t-1})$.

Let $y \in V_3 \setminus \{x\}$.
First suppose that $y = r_i$ for some $1 \le i \le s$.
Thus both $v_1r_i$ and $v_2r_i$ are blue: if $v_1r_i$ is red, say, $G$ can be partitioned into the red path $(v_2,v_1,r_i,\ldots,r_1,x,r_s,\ldots,r_{i-1})$ and the blue path $(b_1,\ldots,b_t)$, a contradiction. (Note that $b_1\in V_1$ and $b_t\notin V_1$.)
But this means $G$ can be partitioned into the red path $(r_{i+1},\ldots,r_s,x,r_1,\ldots,r_{i-1})$ and the blue path $(v_1,r_i,v_2,b_1,\ldots,b_t)$.

Thus $y = b_i$ for some $1 \le i \le t$.
Hence, $v_1b_i$ is red, as otherwise $G$ can be partitioned into the red path $(r_2,\ldots,r_s)$ and the blue path $(r_1,v_1,b_i,\ldots,b_1,$ $v_2,x,b_t,\ldots,b_{i+1})$.
So, the edge $r_1b_i$ must be blue, for otherwise $G$ can be partitioned into the red path $(v_1,b_i,r_1,\ldots,r_s)$ and the blue path $(b_{i+1},\ldots,b_t,x,v_2,$ $b_1,\ldots,b_{i-1})$.
Consequently, $G$ can be partitioned into the red path $(r_2,\ldots,r_s)$ and the blue path $(v_1,r_1,b_i,b_{i+1},\ldots,b_t,x,v_2,b_1,\ldots,b_{i-1})$.
This finishes the proof of~\eqref{b1nichtinV1}.

\medskip

Next, we show that
\begin{equation}\label{b1nichtinV3}
 \text{$b_1 \notin V_3$.}
 \end{equation}
 Suppose otherwise. 
Then $r_1b_1 \in E$ and this edge must be red, since otherwise $G$ can be partitioned into the red path $(r_2,\ldots,r_s)$ and the blue path $(v_1,r_1,b_1,\ldots,b_t,x,v_2)$.
In the case that $v_1b_1$ is red, $G$ can be partitioned into the red path $(v_2,v_1,b_1,r_1,\ldots,r_s)$ and the blue path $(x,b_t,\ldots,b_1)$.
So $v_1b_1$ is blue, thus $G$ can be partitioned into the red path $(r_1,\ldots,r_s)$ and the blue path $(v_2,x,b_t,\ldots,b_1,v_1)$.
This finishes the proof of~\eqref{b1nichtinV3}.

\medskip

Putting~\eqref{b1nichtinV1} and ~\eqref{b1nichtinV3} together, we see that $$b_1\in V_2.$$
 Then $b_1v_1$ must be red, as otherwise $G$ can be partitioned into the red path $(r_1,\ldots,r_s)$ and the blue path $(v_2,x,b_t,\ldots,b_1,v_1)$.
Hence $xb_1$ is blue, since otherwise $G$ can be partitioned into the red path $(r_1,\ldots,r_s,x,b_1,v_1,v_2)$ and the blue path $(b_2,\ldots,b_t)$.
Moreover, $b_tv_1$ is red, otherwise $G$ can be partitioned into the red path $(r_1,\ldots,r_s)$ and the blue path $(v_2,x,b_1,\ldots,b_t,v_1)$.

We claim that
\begin{equation}\label{rsundbtinV2}
 \text{$r_s,b_t \in V_2$.}
 \end{equation}

For this, first assume the edge $r_sb_t$ does exist.
If $r_sb_t$ is red, $G$ can be partitioned into the red path $(r_1,\ldots,r_s,b_t,v_1,v_2)$ and the blue path $(x,b_1,\ldots,b_t)$.
If $r_sb_t$ is blue, however, $G$ can be partitioned into the red path $(r_1,\ldots,r_{s-1})$ and the blue path $(v_1,r_s,b_t,\ldots,b_1,x,v_2)$. This shows that $r_sb_t$ does not exist.
Since $x \in V_1$, we get that $r_s,b_t \in V_i$ for some $i \in \{2,3\}$.

Now, if $r_s,b_t \in V_3$, then $r_sv_2 \in E$.
If $r_sv_2$ is red, $G$ can be partitioned into the red path $(r_1,\ldots,r_s,v_2,v_1)$ and the blue path $(x,b_t,\ldots,b_1)$.
But if $r_sv_2$ is blue, $G$ can be partitioned into the red path $(r_1,\ldots,r_{s-1})$ and the blue path $(v_1,r_s,v_2,x,b_t,\ldots,b_1)$.
This proves~\eqref{rsundbtinV2}.

Since $r_1 \in V_2$, we have that $r_2b_t \in E$. We show that
\begin{equation}\label{r2btblau}
 \text{$r_2b_t$ is blue.}
 \end{equation}

Suppose otherwise.
Then also $r_1b_{t-1}$ is red, as else $G$ can be partitioned into the red path $(v_2,v_1,b_t,r_2,\ldots,r_s,x)$ and the blue path $(b_1,\ldots,b_{t-1},r_1)$.
Hence, the edge $v_2b_{t-1}$ is blue, for otherwise $G$ can be partitioned into the red path $(b_t,v_1,v_2,b_{t-1},r_1,\ldots,r_s,x)$ and the blue path $(b_1,\ldots,b_{t-2})$.
So, $r_sb_{t-1}$ is blue, else $G$ can be partitioned into the red path $(v_2,v_1,b_t,r_2,\ldots,r_s,b_{t-1},r_1)$ and the blue path $(x,b_1,\ldots,b_{t-2})$.
Now, however, $G$ can be partitioned into the red path $(b_t,r_2,\ldots,r_{s-1})$ and the blue path $(r_1,v_1,r_s,b_{t-1},v_2,x,b_1,\ldots,b_{t-2})$. This proves~\eqref{r2btblau}.

Observe that 
\begin{equation}\label{r2rsrot}
 \text{$r_2r_s$ is red,}
 \end{equation}
 for otherwise $G$ can be partitioned into the red path $(r_{s-1},\ldots,r_3)$ and the blue path $(r_1,v_1,r_s,r_2,b_t,\ldots,b_1,x,v_2)$. 

We now apply Lemma~\ref{lem:23edge-new} to $H$,
 to see that there is an edge $uv$ on $R$ or on $B$ with $u \in V_3$ and $v \in V_1$.
We may apply Lemma~\ref{lem:23edge-new} even if $n_i'=n_{3-i}'+n_3'-1$, for some $i \in \{1,2\}$, since $r_1 \in V_2$.

We claim that
\begin{equation}\label{uaufB}
 \text{$u \in V(B)$.}
 \end{equation}
Suppose otherwise, i.e.~assume $u = r_i$ for some $2 \le i \le t$.
We discuss the case $v = r_{i+1}$ only, the other case is similar.
Observe that not both $v_1r_i$ and $v_2r_i$ can be blue, for otherwise  the red path $(r_{i-1},\ldots,r_2,r_s,\ldots,r_{i+1})$ and the blue path $(r_1,v_1,r_i,v_2,x,b_t,\ldots,b_1)$ cover $G$.
Say $v_1r_i$ is red,  the other case can be resolved similarly.
If $r_1x$ is blue, then we can cover $G$ with the red path $(v_2,v_1,r_i,\ldots,r_1,r_s,r_{s-1},\ldots, r_{i+1})$ and the blue path $(r_1,x,b_t,\ldots,b_1)$.
Hence, $r_1x$ is red, and so $G$ can be partitioned into the red path $(v_2v_1,r_i,r_{i-1},\ldots,r_1,x,r_s,\ldots,r_{i+1})$ and the blue path $(b_1,\ldots,b_t)$.
This proves~\eqref{uaufB}.

Say $u = b_i$ for some $2 \le i \le t$.
We assume $v = b_{i-1}$, as the other case is similar.
If $v_1b_i$ is blue, $v_2r_2$ must be red: otherwise, $G$ can be partitioned into the red path $(r_3,\ldots,r_s)$ and the blue path $(r_1,v_1,b_i,\ldots,b_t,r_2,v_2,x,b_1,\ldots,b_{i-1})$.
But then $G$ can be partitioned into the red path $(v_2,r_2,\ldots,r_s)$ and the blue path $(r_1,v_1,b_i,\ldots,b_t,r_2,v_2,x,b_1,\ldots,b_{i-1})$.

Thus, $v_1b_i$ must be red.
Hence, $r_1b_i$ is blue, for otherwise we can cover  $G$ with the red path $(v_2,v_1,b_i,r_1,\ldots,r_s)$ and the blue path $(b_{i+1},\ldots,b_t,b_1,\ldots,b_{i-1})$.
So, the edge $r_2v_2$ is blue, as otherwise $G$ can be partitioned into the red path $(v_1,v_2,r_2,\ldots,r_s)$ and the blue path $(r_1,b_i,\ldots,b_t,x,b_1,\ldots,b_{i-1})$.
But then $G$ can be partitioned into the red path $(r_3,\ldots,r_{s-1})$ and the blue path $(r_s,v_1,r_1,b_i,\ldots,b_t,r_2,v_2,x,b_1,\ldots,b_{i-1})$, yielding the final contradiction.
\end{proof}

\begin{claim}\label{xinV1r1inV1gehtnicht}
Let $i\in\{1,2\}$. If $x\in V_i$ then $r_1\notin V_{i}$.
\end{claim}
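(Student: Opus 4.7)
The plan is to argue by contradiction, assuming by symmetry that $x\in V_1$ and $r_1\in V_1$; combined with Claims~\ref{xinV1r1nichtinV3} and~\ref{xinV1r1inV2gehtnicht} this would rule out every possibility for $r_1$ when $x\notin V_3$, and together with Claim~\ref{xnotinV3} complete the main case analysis of Lemma~\ref{lem:veryfair}. From Claim~\ref{xinV1r1nichtinV3} we get for free that the edges $xv_2$, $r_sv_1$ and $r_1v_2$ are blue, and that is the starting data.

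The first step would be to pin down $r_s$. We know $r_s\notin V_1$ because it is adjacent to $x$ on $R$, and I would rule out $r_s\in V_3$ by a short case split on the colour of $r_sv_2$: if red, use the red path $(r_1,\ldots,r_s,v_2,v_1)$ together with the blue path $(x,b_t,\ldots,b_1)$; if blue, use the red path $(r_1,\ldots,r_{s-1})$ together with the blue path $(v_1,r_s,v_2,x,b_t,\ldots,b_1)$. In each case the endvertex-extension observation stated near the beginning of the proof of Lemma~\ref{lem:veryfair} converts this into a partition whose two paths share an endvertex, contradicting our standing assumption. Thus $r_s\in V_2$.

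Next, I would constrain the endpoints $b_1$ and $b_t$ of $B$ in the same style as in the proof of Claim~\ref{xinV1r1inV2gehtnicht}. Separately rule out $b_1\in V_1$ and $b_1\in V_3$ by inspecting, for each edge from $b_1$ to the ``special'' vertices $\{v_1,v_2,r_1,r_s,x\}$, the two possible colours, and at every step exhibit a two-path partition of $G$ that can be extended to share an endvertex. A symmetric argument handles $b_t$. This should force $b_1,b_t\in V_2$, after which a further short cascade of forced colours on edges such as $b_1v_1$, $b_tv_1$, $b_1r_s$, $b_tr_1$, $xb_1$ and $xb_t$ is read off by the same rerouting technique.

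The endgame would be an appeal to Lemma~\ref{lem:23edge-new} applied to the Hamilton path $(r_1,\ldots,r_s,x,b_t,\ldots,b_1)$ of $H$, yielding an edge $uv$ on $R\cup B$ with $u\in V_3$ and $v\in V_2$; the boundary clause of that lemma is harmless here because $r_1\in V_1$ keeps one Hamilton-path endpoint outside $V_2\cup V_3$. A case split on whether $uv$ lies on $R$ or on $B$, and on which side of $x$ it sits, combined with all the forced colours accumulated above, should produce in every sub-case a rival partition of $G$ into a red and a blue path that extends to one sharing a common endvertex, the desired contradiction. The principal obstacle will be, as in the preceding two claims, the sheer bookkeeping: each sub-case pins down several further edge colours via trial partitions, and one has to verify carefully that the Hamilton-path edge furnished by Lemma~\ref{lem:23edge-new} does not coincide with $x$ or with an endpoint that has already been consumed by an attempted partition.
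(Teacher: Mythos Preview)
Your overall architecture matches the paper's: assume $x,r_1\in V_1$, import the blue edges $xv_2,r_sv_1,r_1v_2$ from Claim~\ref{xinV1r1nichtinV3}, pin down $b_1$, accumulate forced colours, and finish with Lemma~\ref{lem:23edge-new}. Two points, however, deserve attention.

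\smallskip
\textbf{The order of your first two steps is wrong.} Your proposed first step, eliminating $r_s\in V_3$, relies on the two reroutings you wrote down, but neither is guaranteed to produce a partition that can be extended to share an endvertex. For instance, if $r_sv_2$ is red, the red path $(r_1,\ldots,r_s,v_2,v_1)$ has both endvertices in $V_1$, and the blue path $(x,b_t,\ldots,b_1)$ has $x\in V_1$; so if $b_1\in V_1$ as well, all four endvertices sit in $V_1$ and the extension observation does not apply. The paper avoids this by deferring the determination of $r_s$ until \emph{after} it has established $b_1\in V_2$; only then do these reroutings succeed.

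\smallskip
\textbf{Ruling out $b_1\in V_1$ is substantially harder than you suggest.} You propose to do this by inspecting only the edges from $b_1$ to the five special vertices $\{v_1,v_2,r_1,r_s,x\}$. That is not enough: the paper's argument for $b_1\notin V_1$ already requires a full invocation of Lemma~\ref{lem:23edge-new} (yielding an edge $uv$ with $u\in V_3$, $v\in V_2$) and a case split on whether $u$ lies on $R$ or on $B$. Only after that does one conclude $b_1\in V_2$ and proceed to the cascade of forced colours you list. Your endgame then again calls Lemma~\ref{lem:23edge-new}, so the lemma is in fact used twice; in the second application the paper asks for $v\in V_1$ (justified by $b_1\in V_2$ ruling out the exceptional clause), not $v\in V_2$ as you propose. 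Your choice of $v\in V_2$ may also be workable, but you should be aware it diverges from the paper and would need its own verification.
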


\begin{proof}
Because of symmetry, we only show the claim for $i=1$. So suppose $x\in V_1$ and  $r_1\in V_{1}$.
By Claim~\ref{xinV1r1nichtinV3}, the edges $r_sv_1$, $v_2x$ and  $r_1v_2$ are blue.

We first show that
\begin{equation}\label{b1noestaenV1}
 \text{$b_1 \notin V_1$.}
 \end{equation}

Suppose otherwise.
Then the edge $r_sb_1$ is red, for otherwise $G$ can be partitioned into the red path $(r_2,\ldots,r_{s-1})$ and the blue path $(v_1,r_s,b_1,\ldots,b_t,x,v_2,$ $r_1)$.
Thus, the edge $b_1v_2$ is blue, else $G$ can be partitioned into the red path $(v_1,v_2,$ $b_1,r_s,\ldots,\ldots,r_1)$ and the blue path $(b_2,\ldots,b_t,x)$.
Moreover, $r_2v_1$ is blue, since otherwise $G$ can be partitioned into the red path $(v_1,r_2,\ldots,r_s)$ and the blue path $(r_1,v_2,b_1,\ldots,b_t,x)$.
Hence, $r_2b_1$ must be red, as otherwise $G$ can be partitioned into the red path $(r_{s-1},\ldots,r_3)$ and the blue path $(r_s,v_1,r_2,b_1,\ldots,b_t,x,v_2,r_1)$.
Thus, $v_1b_2$ must be red, as otherwise $G$ can be partitioned into the red path $(r_{s-1},\ldots,r_2,b_1)$ and the blue path $(r_s,v_1,b_2,\ldots,b_t,$ $x,v_2,r_1)$.

Applying Lemma~\ref{lem:23edge-new} to $H$, we see that there is an edge $uv$ in $R$ or $B$ with $u \in V_3$ and $v \in V_2$.
Note that we may apply Lemma~\ref{lem:23edge-new} even if $n_i'=n_{3-i}'+n_3'-1$, for some $i \in \{1,2\}$, since $r_1 \in V_1$ by assumption.

First assume ${u \in V(R)}$, i.e.~$u = r_i$ for some $2 \le i \le t$.
Say $v = r_{i-1}$, the other case is similar.
If $r_iv_1$ is red, then $G$ can be partitioned into the red path $(v_1,r_i,\ldots,r_2,b_1,r_s,\ldots,r_{i+1})$ and the blue path $(r_1,v_2,x,b_t,\ldots,b_2)$.
Thus $r_iv_1$ is blue and, similarly, $r_{i-1}v_1$ is blue.
Either $r_i$ or $r_{i-1}$ is adjacent to $b_2$, say $r_ib_2 \in E$ (the other case is analogous).
If $r_ib_2$ is red, $G$ can be partitioned into the red path $(v_1,b_2,r_i,\ldots,r_s,b_1,r_2,\ldots,r_{i-1})$ and the blue path $(r_1,v_2,x,b_t,\ldots,b_3)$.
Otherwise, $G$ can be partitioned into the red path $(r_{i-1},\ldots,r_2,b_1,r_s,\ldots,r_{i+1})$ and the blue path $(r_1,v_2,x,b_t,\ldots,b_2,r_i,v_1)$.
However, both is contradictory, and we may thus assume that $u \in V(B)$.

So, $u = b_i$ for some $2 \le i \le t$.
We discuss the case $v = b_{i-1}$ only, as the case $v = b_{i+1}$ can be treated similarly.
Now, the edge $b_iv_1$ is red, since otherwise $G$ can be partitioned into the red path $(r_1,\ldots,r_s)$ and the blue path $(v_1,b_i,b_{i+1},\ldots,b_t,x,v_2,b_1,\ldots,b_{i-1})$.
Similarly, $b_{i-1}v_1$ is red.
Clearly $r_1b_i \in E$, and this edge must be blue.
Otherwise, $G$ can be partitioned into the red path $(v_1,b_i,r_1,\ldots,r_s)$ and the blue path $(b_{i-1},\ldots,b_1,v_2,x,b_t,\ldots,b_{i+1})$.
Similarly, $r_1b_{i-1}$ is blue.

It is clear that $r_2b_i \in E$ or $r_2b_{i-1} \in E$, say $r_2b_i \in E$.
If the edge $r_2b_i$ is blue, then we can cover $G$ with the red path $(r_3,\ldots,r_s)$ and the blue path $(v_1,r_2,b_i,r_1,b_{i-1},\ldots,b_1,v_2,x,b_t,\ldots,b_{i+1})$.
Otherwise, we can partition $G$ into the red path $(v_1,b_i,r_2,\ldots,r_s)$ and the blue path $(r_1,b_{i-1},\ldots,b_1,v_2,x,b_t,\ldots,b_{i+1})$. This finishes the proof of~\eqref{b1noestaenV1}.

\medskip

Next, we show that
\begin{equation}\label{b1noestaenV3}
 \text{$b_1 \notin V_3$.}
 \end{equation}
 Suppose otherwise. 
Then $v_1b_1 \in E$ and this edge must be red, otherwise $G$ can be partitioned into the red path $(r_1,\ldots,r_s)$ and the blue path $(v_1,b_1,\ldots,b_t,$ $x,v_2)$.
 
Now assume  $r_s\in V_2$. In that case, if $r_sb_1$ is red, we cover $G$ with the red path $(v_2,v_1,b_1,r_s,\ldots, r_1)$ and the blue path $(b_2,\ldots, b_t,x)$. If  $r_sb_1$ is blue, we cover $G$ with the red path $(r_{s-1},\ldots, r_1)$ and the blue path $(v_1,r_s,b_1,b_2,\ldots, b_t,x,v_2)$. 
Thus $r_s\notin V_2$ and hence $r_s\in V_3$.

But now, the edge $r_sv_2$ cannot be red, because of the red path $(r_1,\ldots, r_s,v_2,$ $v_1)$ and the blue path $(b_1\ldots, b_t,x)$. It also cannot be blue, because of the red path $(r_1,\ldots, r_{s-1})$ and the blue path $(b_1\ldots, b_t,x,v_2,r_s,v_1)$.
This finishes the proof of~\eqref{b1noestaenV3}.

\medskip

By~\eqref{b1noestaenV1} and~\eqref{b1noestaenV3}, we know that ${b_1 \in V_2}$.
Thus $b_1v_1 \in E$ and this edge must be red, as otherwise $G$ can be partitioned into the red path $(r_1,\ldots,r_s)$ and the blue path $(v_1,b_1,\ldots,b_t,x,v_2)$.
So, $b_1r_1$ is blue, for otherwise $G$ can be partitioned into the red path $(v_2,v_1,b_1,r_1,\ldots,r_s)$ and the blue path $(x,b_t,\ldots,b_1)$.
Thus, $b_tv_1$ is red, since otherwise $G$ can be partitioned into the red path $(r_2,\ldots,r_s)$ and the blue path $(v_1,b_t,x,v_2,r_1,b_1,\ldots,b_{t-1})$.
This implies that the edge $b_tr_1$ is blue, as otherwise $G$ can be partitioned into the red path $(v_2,v_1,b_t,r_1,\ldots,r_s,x)$ and the blue path $(b_{t-1},\ldots,b_1)$.
Moreover, $r_1r_s$ is red, for otherwise $G$ can be partitioned into the red path $(r_2,\ldots,r_{s-1})$ and the blue path $(v_1,r_s,r_1,v_2,x,b_t,\ldots,b_1)$.
Also, $xb_1$ is blue, else $G$ can be partitioned into the red path $(v_2,v_1,b_1,x,r_s,\ldots,r_1)$ and the blue path $(b_{t-1},\ldots,b_2)$.

Applying Lemma~\ref{lem:23edge-new} to $H$, we see that there is an edge $uv$ in $R$ or $B$ with $u \in V_3$ and $v \in V_1$.
Note that we may apply Lemma~\ref{lem:23edge-new} even if $n_i'=n_{3-i}'+n_3'-1$, for some $i \in \{1,2\}$, since we know that $b_1 \in V_2$.

First assume ${u \in V(R)}$, i.e.~$u = r_i$ for some $2 \le i \le t$.
We may suppose that $v = r_{i-1}$, the other case is similar.
Then $r_iv_1$ is blue, for otherwise $G$ can be partitioned into the red path $(v_2,v_1,r_i,\ldots,r_s,r_1,\ldots,r_{i-1})$ and the blue path $(x,b_t,\ldots,b_1)$.
Similarly, $r_iv_2$ is blue.
But then $G$ can be partitioned into the red path $(r_{i-1},\ldots,r_1,r_s,\ldots,r_{i+1})$ and the blue path $(v_1,r_i,v_2,x,b_t,\ldots,b_1)$, a contradiction.

Therefore,  ${u \in V(B)}$.
Say $u = b_i$ for some $2 \le i \le t$.
Say $v = b_{i-1}$, the other case is similar.
Thus, $b_iv_1$ is red, for otherwise $G$ can be partitioned into the red path $(r_2,\ldots,r_s)$ and the blue path $(v_1,b_i,\ldots,b_1,r_1,v_2,x,b_t,\ldots,b_{i+1})$.
Hence, the edge $r_1b_i$ is blue, for otherwise $G$ can be partitioned into the red path $(v_2,v_1,b_i,r_1,\ldots,r_s)$ and the blue path $(b_{i-1},\ldots,b_1,x,b_t,\ldots,b_{i+1})$.

If $r_s \in V_3$, then $r_sv_2 \in E$.
But then $r_sv_2$ must be blue, since otherwise $G$ can be partitioned into the red path $(r_1,\ldots,r_s,v_2,v_1)$ and the blue path $(x,b_t,\ldots,b_1)$.
This implies that $G$ can be partitioned into the red path $(r_1,\ldots,r_{s-1})$ and the blue path $(v_1,r_s,v_2,x,b_t,\ldots,b_1)$.
So, $r_s \notin V_3$, and hence $r_s \in V_2$.
Thus, the edge $r_sb_i$ exists and it must be red, for otherwise $G$ can be partitioned into the red path $(r_2,\ldots,r_{s-1})$ and the blue path $(v_1,r_s,b_i,\ldots,b_t,x,v_2,r_1,b_1,\ldots,b_{i-1})$, where $v_1 \in V_1$ and $r_2 \notin V_1$.
Hence, $G$ can be partitioned into the red path $(v_2,v_1,b_i,r_s,\ldots,r_1)$ and the blue path $(b_{i+1},\ldots,b_t,x,b_1,\ldots,b_{i-1})$, a contradiction.
\end{proof}

Putting Claims~\ref{xnotinV3}, \ref{xinV1r1inV2gehtnicht} and~\ref{xinV1r1inV1gehtnicht} together, we arrive at the final contradiction and thus 
complete the proof of Lemma~\ref{lem:veryfair}.

\section{Connected matchings}\label{sec:matchings}

Let $G$ be a graph whose edges are coloured red or blue.
A monochromatic matching of $G$ is called \emph{connected} if it is contained in a connected component of the subgraph induced by the edges of the corresponding colour. 

The following lemma, Lemma~\ref{lem:conn-matchings-exact}, says that any fair complete multipartite graph with at least three partition classes can be covered with two connected matchings of distinct colours. This is a direct consequence of our Theorem~\ref{thm:mainresult}, and thus there would be no reason to prove such a statement. But, as our aim is to apply the regularity method later, in order to pump up our paths/connected matchings to cycles that cover almost all vertices of the graph, we need a robust version of Lemma~\ref{lem:conn-matchings-exact}. (This version is given in Lemma~\ref{lem:conn-matchings-robust} below.) It will be much easier to extend the proof of Lemma~\ref{lem:conn-matchings-exact} for a robust version, than the one of Theorem~\ref{thm:mainresult}. We prefer to give the proof of the exact version first, so that the idea becomes clear to the reader. 

It will be convenient to formulate the two lemmas only for tripartite graphs. This is justified by Lemma~\ref{lem:reductiontotripartite}.

\begin{lemma}\label{lem:conn-matchings-exact}
Let $G$ be a fair complete tripartite graph on an even number of vertices.
If the edges of $G$ are coloured red or blue, then there are two vertex-disjoint connected matchings of distinct colours that together cover all vertices of $G$.
\end{lemma}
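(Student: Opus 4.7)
The plan is to derive the lemma directly from Theorem~\ref{thm:mainresult}. Apply that theorem to obtain two vertex-disjoint monochromatic paths $R$ (red) and $B$ (blue) that partition $V(G)$; write $R=(r_1,\ldots,r_s)$ and $B=(b_1,\ldots,b_t)$, so $s+t=n$. Since $n$ is even, $s$ and $t$ have the same parity.

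If both $s$ and $t$ are even, the alternating edge sets of $R$ and $B$ form perfect matchings of $V(R)$ and $V(B)$, respectively. Each such matching lies in a single monochromatic connected component (the one spanned by the corresponding path), so they are connected matchings of distinct colours whose union covers $V(G)$, and we are done.

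The interesting case is when both $s$ and $t$ are odd. Here I will use the flexibility that, for a path $v_1,\ldots,v_k$ with $k$ odd and any $i\in\{1,3,\ldots,k\}$, the union of the alternating matchings of $v_1,\ldots,v_{i-1}$ and $v_{i+1},\ldots,v_k$ is a matching contained in the edge set of the path which covers exactly $\{v_1,\ldots,v_k\}\setminus\{v_i\}$. Thus I am free to leave any odd-indexed vertex of $R$ uncovered, and similarly for $B$. The aim is to pick uncovered vertices $u\in\{r_1,r_3,\ldots,r_s\}$ and $v\in\{b_1,b_3,\ldots,b_t\}$ lying in different partition classes of $G$; the edge $uv$ then exists in $G$ and can be added to the matching of its own colour. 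The resulting set remains a matching (as $u$ and $v$ were previously uncovered) and is still connected, because $u$ lies in the red component spanning $R$ and $v$ lies in the blue component spanning $B$.

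The main and only obstacle is to show that such $u,v$ always exist. Suppose otherwise: then every vertex of $\{r_1,r_3,\ldots,r_s\}\cup\{b_1,b_3,\ldots,b_t\}$ lies in a common partition class $V_i$. These vertices are pairwise distinct (because $V(R)$ and $V(B)$ are disjoint) and their number is $\frac{s+1}{2}+\frac{t+1}{2}=\frac{n}{2}+1$, contradicting the fairness condition $|V_i|\le n/2$. Degenerate cases, such as $R$ consisting of a single vertex or being empty, are absorbed by the same counting argument.
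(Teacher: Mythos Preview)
Your proof is correct. The parity split is clean, and the counting argument showing that the odd-indexed vertices of $R$ and $B$ cannot all sit in a single partition class is exactly right: $(s+1)/2+(t+1)/2=n/2+1>|V_i|$ by fairness. The connectedness check also goes through, since the extra edge $uv$ is attached to a vertex already in the relevant monochromatic component of its path (and the degenerate cases $s=1$ or $t=1$ give a one-edge matching, which is trivially connected).

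Your route, however, is genuinely different from the paper's. You invoke Theorem~\ref{thm:mainresult} and simply read off the matchings from the two paths; the paper instead gives a self-contained inductive proof that strips off a pair $v_1\in V_1$, $v_2\in V_2$, covers the rest by induction, and then argues about how the edge $v_1v_2$ interacts with the colour components $V_R,V_B$. The paper explicitly acknowledges (just before the lemma) that your deduction from Theorem~\ref{thm:mainresult} is available and valid. The reason it does not take your shortcut is that the real target is the robust version, Lemma~\ref{lem:conn-matchings-robust}, where $G$ is only nearly complete tripartite; there Theorem~\ref{thm:mainresult} is unavailable, while the inductive/component argument adapts with only mild degree-based modifications. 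So your proof buys brevity for the exact statement, but the paper's proof buys a template that survives the passage to the approximate setting needed for the regularity method.
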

\begin{proof}
Say $V_1$ and $V_2$ are the largest two partition classes of $G$.
Let $v_1 \in V_1$, $v_2 \in V_2$ and set $G' = G -\{v_1,v_2\}$.
Note that $G'$ is fair, and unless $|V_2|=1$, we may apply induction to obtain two vertex-disjoint connected matchings of distinct colours that together cover all vertices of $G'$. 

On the other hand, if $|V_2|=1$, then $|V_3| = 1$, and thus by fairness, $|V_1|=2$.
In this case, any two disjoint edges $e_1,e_2$ cover all the vertices of $G'$. Clearly, it is either possible to choose $e_1$, $e_2$ of distinct colours, or in way that they give a monochromatic connected matching.

Resuming, in either case there are a red and a blue connected matching $M_R$, $M_B$ that cover all of~$V(G)$ except $v_1,v_2$. Let $V_R$ be
 the vertex set of the connected red component of $G$ containing $M_R$, and let $V_B$ be the analogue for blue.
Even if one of the matchings is empty, note that we can always assume that  $|V_R|, |V_B|\geq 1$.

Also, $V_R$ and $V_B$ meet. Indeed,
choose any distinct $i,j \in \{1,2,3\}$ with $V_R \cap V_i\neq \emptyset\neq V_B \cap V_j $, say $x \in V_R \cap V_i$ and $y \in V_B \cap V_j$.
The edge $xy$ is either red or blue, which means that $(V_R \cap V_B)\cap (V_i\cup V_j) \neq \emptyset$. In particular, we get $V_R \cap V_B \neq \emptyset$.

Assume $v_1v_2$ is red.
Suppose that $G$ cannot be covered with two connected matchings as desired.
Thus, 
\begin{equation}\label{nichtinrot}
\{v_1,v_2\} \cap V_R = \emptyset,
\end{equation}
 since otherwise we could add $v_1v_2$ to $M_R$.
This means that 
\begin{equation}\label{blaublaublau}
\text{all edges in the cut $E_G(V_R,\{v_1,v_2\})$ are blue.}
\end{equation}
In particular, all edges in $E_G(V_R \cap V_B,\{v_1,v_2\})$ are blue.
Hence, at least one of $v_1,v_2$ is contained in $V_B$, say $v_1\in V_B$. 
Consequently, $V(G) \setminus \{v_2\} \subseteq V_R \cup V_B$.
Thus, by~\eqref{blaublaublau}, we have that 
\begin{equation}\label{allesblau}
V(G)\setminus (V_1\cup\{v_2\})\subseteq V_B.
\end{equation}

Now, if $v_2\notin V_B$, then by~\eqref{nichtinrot}, we have $v_2\notin V_R \cup V_B$, and hence $V_R \cap V_B\subseteq V_2$ (since any edge from $v_2$ to $V_R \cap V_B$ has some colour). 
By the argument we used for showing that $V_R \cap V_B\neq\emptyset$, we know that $V_1\cup V_3$ is contained either in $V_R\setminus V_B$ or in $V_B\setminus V_R$. 
By~\eqref{allesblau}, this means we have $V_B\setminus V_R\supseteq V_1\cup V_3$.

In either case, we find that  $V_B$ covers all but at most one partition class. 
Let $M_B'$ be a largest blue matching in $G$ such that $G'':=G-V(M'_B)$ is still fair.
Since $V_B$ covers all but at most one partition class, $M_B'$ is a connected matching.
If all edges in $G''$ are red, then $G''$ has a red connected perfect matching, and we are done. 

So assume there is some blue edge $uv \in E(G'')$. 
By the choice of $M'_B$, the graph $G''-\{u,v\}$ is not fair. This means neither of $u,v$ lies in the largest partition class $V''_1$ of $G''$, and, furthermore, $|V''_1|=|V(G'')|/2$. Thus all edges between $V''_1$ and the rest of $G''$ are red, and hence, we can cover $G''$ with a red connected matching.
 \end{proof}
 
We now give a robust version of Lemma~\ref{lem:conn-matchings-exact}. This is the result we need for applying regularity later. 
 
\begin{lemma}\label{lem:conn-matchings-robust}
Let $G$ be a fair tripartite graph on $n$ vertices, with partition classes $V_1,V_2,V_3$, such that $|V_i|\geq 3\eps n$, for some~$\eps$ with $0<\varepsilon <1/5$. Suppose that $$d(v)\geq (1-\varepsilon) (n-|V_i|)$$ for $i=1,2,3$ and for each $v\in V_i$. If  the edges of $G$ are coloured red and blue, then there are two vertex-disjoint connected matchings of distinct colours that together cover all but at most $36\varepsilon n$ vertices of $G$.
\end{lemma}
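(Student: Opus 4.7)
The plan is to adapt the proof of Lemma~\ref{lem:conn-matchings-exact} to the robust setting, carefully tracking the error introduced by missing edges. The degree hypothesis is equivalent to saying that each $v\in V_i$ has at most $\varepsilon(n-|V_i|)\leq \varepsilon n$ non-neighbours in $V\setminus V_i$, so $G$ behaves like an ``almost complete'' tripartite graph, and each step in the exact proof that invoked the existence of a specific edge can fail for at most $\varepsilon n$ candidate partners.

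I would proceed by induction on $n$, picking $v_1\in V_1$ and $v_2\in V_2$ such that $G'=G-\{v_1,v_2\}$ still satisfies fairness together with a slightly weakened degree hypothesis (the degrees drop by at most $1$ while the relevant partition sizes drop by at most $1$, so with room to spare in the constant $36$ we may absorb this perturbation). By induction, $G'$ admits two vertex-disjoint monochromatic connected matchings $M_R$ and $M_B$ of distinct colours covering $V(G')$ except for a set of size at most $36\varepsilon n - O(1)$. Let $V_R\supseteq V(M_R)$ and $V_B\supseteq V(M_B)$ be the vertex sets of the monochromatic components of $G$ containing these matchings. The case analysis then mirrors that of Lemma~\ref{lem:conn-matchings-exact}: we use the colour of $v_1v_2$ and the structure of $V_R$, $V_B$ either to extend one of the matchings to include $v_1,v_2$, or to discard a small set and rebuild. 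Each appeal to edge existence in the exact proof may fail on at most $\varepsilon n$ vertices; these are charged to the loss budget.

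The decisive step, as in the exact proof, is the endgame where one picks a largest blue matching $M_B'$ subject to $G''=G-V(M_B')$ remaining fair, and then seeks a red connected matching covering $G''$. If $G''$ consists entirely of red edges between its partition classes, then the high-degree hypothesis and a Hall-type argument yield a red matching of $G''$ covering all but $O(\varepsilon n)$ of its vertices. Otherwise, as in the exact proof, a blue edge $uv$ in $G''$ forces the dominant partition class of $G''$ to send essentially only red edges to the rest of $G''$, so a large red connected matching again exists. The main obstacle will be the bookkeeping: the exact proof branches into several subcases, and each contributes its own $O(\varepsilon n)$ error that must be summed; the constant $36$ is a loose bound chosen to cover the worst branch. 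A secondary obstacle is the \emph{connectedness} requirement — this is not automatic in the robust setting, since (as is visible in split-type colourings) a colour class can have two large components, but the case analysis always produces matchings confined to a single component determined by the structural step, so this issue is handled by the same mechanism as in the exact proof.
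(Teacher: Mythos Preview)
The inductive approach has a genuine gap. The degree hypothesis $d(v)\geq (1-\varepsilon)(n-|V_i|)$ does \emph{not} persist to $G'=G-\{v_1,v_2\}$ with the same $\varepsilon$: a vertex of $V_3$ may lose two neighbours while $n-|V_3|$ drops by only $2$, so one is forced to take some $\varepsilon'>\varepsilon$ in order to apply the hypothesis to $G'$. Over the $\Theta(n)$ inductive steps this degradation compounds and eventually violates $\varepsilon'<1/5$. The same accumulation hits the loss budget: the inductive call on $G'$ already consumes essentially all of the $36\varepsilon n$ allowance, leaving nothing to which the ``failed edge-existence'' errors of the extension step can be charged. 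Saying the perturbation is ``absorbed into the constant $36$'' overlooks that $36$ is fixed once and for all, not renegotiated at every level of the induction. There is also a smaller issue: in the exact proof the single edge $v_1v_2$ drives the case split, but here $v_1v_2$ need not be an edge, and while you can certainly choose $v_1,v_2$ adjacent, several later steps in the exact argument look at specific edges (e.g.\ from $V_R\cap V_B$ to $v_1,v_2$) whose absence cannot all be charged to the same budget.

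The paper's proof sidesteps all of this by abandoning the inductive structure of Lemma~\ref{lem:conn-matchings-exact} entirely. It takes $M_R$ and $M_B$ \emph{maximal} subject to $G':=G-V(M_R\cup M_B)$ remaining fair, assumes $|V(G')|>36\varepsilon n$, and then uses maximality together with the degree condition (and a minimum-degree subgraph trick applied to the bipartite graph between the two large classes of $G'$) to deduce directly that some monochromatic component $V_X$ contains all of $V(G)$ outside one partition class and a set of size at most $\varepsilon n$ per class. The rebuild step---take a maximal matching in colour $X$ keeping the remainder fair, then cover the remainder in the other colour---is then performed once, exactly as you describe. Your account of the rebuild is essentially correct; what is missing is a non-inductive route to the structural input that feeds it, and that is precisely what the extremal choice of $M_R,M_B$ supplies.
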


\begin{proof}
Take a red connected matching $M_R$ and a blue connected matching $M_B$, which together cover as much as possible of  $V(G)$, while leaving $G':=G-V(M_R\cup M_B)$ fair. Let $V_R$, $V_B$ be the vertex sets of the respective colour components, as above we may assume both are non-empty. We assume $G'$ contains more than $36\eps n$ vertices. 

We claim that for all $i,j\in\{1,2,3\}$ with $i\neq j$ and $V_i\cap V_R\neq\emptyset$ and $|V_j\cap V_B| > \eps n$, we have
\begin{equation}\label{inter}
(V_R\cap V_B)\cap (V_i\cup V_j)\neq \emptyset.
\end{equation}
Indeed, by the degree condition of the lemma, there is an edge from $V_i\cap V_R$ to $V_j\cap V_B$, and clearly, one of its endpoints lies in $V_R\cap V_B$. This proves~\eqref{inter}. Observe that~\eqref{inter} also holds if we interchange the roles of $V_R$ and $V_B$.

Let $$V_\eps:=V(G)\setminus (V_R\cup V_B).$$
Our next aim is to show that 
there are $\ell\in\{1,2,3\}$, $X\in\{R,B\}$ with 
\begin{enumerate}[(a)]
\item $V_X\supseteq V(G)\setminus (V_\ell\cup V_\eps)$, and\label{onelargecolour}
\item $|V_\eps\cap V_i|\leq\eps n$ for $i\in\{1,2,3\}$, $i\neq\ell$.\label{otherssmall}
\end{enumerate}

First of all, since all edges are coloured,
note that no vertex in $V_\eps$ may send an edge to $V_R\cap V_B$. So, by the degree condition of the lemma, we know that 
$|V_\eps\cap V_\ell|\leq \eps n$ for all $\ell$ with $(V_R\cap V_B)\setminus V_\ell\neq\emptyset$.
In particular, if $V_\ell$ is such that $|V_\eps\cap V_\ell|> \eps n$, then $V_R \cap V_B \subseteq V_\ell$.
This implies $|V_\eps\cap V_i|\leq \eps n$ and $|V_\eps\cap V_j|\leq \eps n$, where $V_i, V_j$ are the other two partition classes. Moreover, since $V_j$ has at least $3\eps n$ vertices, we know that $V_j\setminus V_\eps$ has at least $\eps n$ vertices in either $V_R\setminus V_B$ or in $V_B\setminus V_R$, say in $V_B\setminus V_R$. Then by~\eqref{inter}, we have that $ V_i\setminus V_\eps$ is contained in $V_B\setminus V_R$. Again by~\eqref{inter} (interchanging the roles of $i$ and $j$), we see that also $ V_j\setminus V_\eps$ is contained in $V_B\setminus V_R$.  Thus, either~\eqref{onelargecolour} and~\eqref{otherssmall} hold, or
\begin{equation}\label{allsmallor}
\text{$|V_\eps\cap V_\ell|\leq \eps n$ for all $\ell\in\{1,2,3\}$.}
\end{equation}
We now assume the latter assertion in order to show that~\eqref{onelargecolour} and~\eqref{otherssmall} hold also in this case.

Since $|V(G')|>36\eps n$, and since $G'$ is fair, the largest two partition classes $V'_1$, $V'_2$ of $G'$  each contain at least $9\eps n$ vertices. For notational ease, we assume $V'_i\subseteq V_i$ for $i=1,2$.

A well-known fact states that any graph $H$ has a subgraph $H'$ which has minimum degree at least half the average degree of $H$. Applying this fact to the bipartite graph spanned by $V'_1$ and $V_2'$, in the majority colour, say this is red, we obtain sets $U_1\subseteq V'_1$ and $U_2\subseteq V'_2$ such that the minimum degree from $U_i$ to $U_{3-i}$ in red  is greater than $2\eps n$. In particular, $|U_i|> 2\eps n$, for $i=1,2$.

By maximality of $M_R$, and since $G'-\{u_1,u_2\}$ is still fair for any red edge $u_1u_2$ between $U_1$ and $U_2$, we know that all edges between $U_1\cup U_2$ and~$V_R$ are blue. In particular, the edges between $U_1\cup U_2$ and some fixed $x\in V_R\cap V_B$ are blue. Without loss of generality, assume $x\notin V_1$. Since $U_1$ has size greater than $2\varepsilon n$, every vertex $y\in V_R\setminus V_1$ sends a blue edge to some blue neighbour of $x$ in $U_1$. Thus, by the definition of $V_\eps$, we have
\begin{equation*}
V(G)\setminus (V_\eps\cup V_1)\subseteq V_B,
\end{equation*}
which is as desired for~\eqref{onelargecolour}. Because of~\eqref{allsmallor}, also~\eqref{otherssmall} holds. We have thus shown~\eqref{onelargecolour} and~\eqref{otherssmall}; let us assume they hold for $\ell =1$ and $X=B$.

\medskip

Set $V'_\eps:=V_\eps\setminus V_1$.
Now take a maximal blue matching $M'_B$ in $G-V'_\eps$ such that $G'':=G-V(M'_B)$ is still fair; by~\eqref{onelargecolour} we know that $M'_B$ is connected in blue. Assume that $|V(G'')| >36\eps n$, as otherwise we are done. 

By maximality of $M'_B$, for any blue edge in $E(G''-V'_\eps)$ we have that $G''-e$ is not fair. Thus, similar as in the proof of Lemma~\ref{lem:conn-matchings-exact},  either all edges of $G''-V'_\eps$ are red, or $G''$ contains a  spanning balanced bipartite graph $H$ such that any blue edge in $H$ is incident with $V'_\eps$. 

In either case, we can easily find a red connected matching in $G''$ covering almost all of $G''$ as follows. Take a maximal red connected matching $M'_R$ in $G''-V'_\eps$, or in  $H-V'_\eps$, such that the remainder of the graph is still fair. Note that we may assume $M'_R\neq\emptyset$ as $|V'_\eps|\leq 2\eps n$ by~\eqref{otherssmall}.
 Let $xy\in M'_R$. The neighbourhoods of $x$ and $y$ in the uncovered part of $G''-V'_\eps$ or $H-V'_\eps$ are large enough to span at least one red edge. Also, all edges between $x$ and $G''-V'_\eps$ or $H-V'_\eps$ are red, a contradiction to the maximality of $M'_R$.
\end{proof}

We now give an analogue of Lemma~\ref{lem:conn-matchings-robust} for bipartite graphs. With the obvious exclusion of the proper split colouring, we can cover all $2$-edge-coloured balanced bipartite graphs that are almost complete bipartite, with two connected matchings of distinct colours.

\begin{lemma}\label{lem:conn-matchings-robust-for-bip}
Let $G$ be a balanced bipartite graph on $n$ vertices, with partition classes $V_1,V_2$.  Suppose $G$ has minimum degree at least $(1-\eps)n/2$,  for some~$\eps$ with $0<\varepsilon <1/5$.\\ If  the
edges of $G$ are coloured red and blue, and this colouring is not a split colouring,
then there are two vertex-disjoint connected matchings of distinct colours that together cover all but at most $4\varepsilon n$ vertices of $G$.
\end{lemma}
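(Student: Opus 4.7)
My plan is to follow the strategy of Lemma~\ref{lem:conn-matchings-robust}, with the ``not a split colouring'' hypothesis playing the role that fairness played in the tripartite case. I would choose vertex-disjoint connected matchings $M_R$ (red) and $M_B$ (blue) that together cover as much of $V(G)$ as possible subject to $G':=G-V(M_R\cup M_B)$ being balanced. Let $V_R,V_B$ be the vertex sets of the red/blue components of $G$ containing $M_R,M_B$, and set $V_\varepsilon:=V(G)\setminus(V_R\cup V_B)$. Suppose for contradiction that more than $4\varepsilon n$ vertices of $G$ remain uncovered.

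The first step is to show $V_\varepsilon=\emptyset$. Any edge from $V_\varepsilon$ to $V_R\cup V_B$, in either colour, would place its $V_\varepsilon$-endpoint into $V_R$ or $V_B$, so no such edge exists and $V_\varepsilon$ is isolated from the rest of $G$. The minimum degree hypothesis then forces either $V_\varepsilon=\emptyset$ or $|V_\varepsilon|\geq(1-\varepsilon)n$; in the latter case $|V(M_R\cup M_B)|\leq\varepsilon n$, while $G[V_\varepsilon]$ is an almost-complete (nearly-balanced) bipartite graph with internal minimum degree $(1-\varepsilon)n/2$, and so contains a monochromatic connected matching much larger than $|V(M_R\cup M_B)|$ whose removal preserves balance, contradicting the maximality of our choice.

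With $V_\varepsilon=\emptyset$, I would examine the partition $V_i=A_i\cup M_i\cup B_i$ for $i=1,2$, where $A_i:=(V_R\setminus V_B)\cap V_i$, $B_i:=(V_B\setminus V_R)\cap V_i$, and $M_i:=(V_R\cap V_B)\cap V_i$. Because $V_R$ and $V_B$ are colour components, there are no edges between $A_1$ and $B_2$ or between $A_2$ and $B_1$, all edges between $A_i$ and $M_{3-i}$ must be red, and all edges between $B_i$ and $M_{3-i}$ must be blue. The minimum-degree bound then forces one of each pair $\{A_1,B_2\}$ and $\{A_2,B_1\}$ to have size at most $\varepsilon n/2$. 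If the two small sets are ``diagonal'' (e.g.\ $|A_1|,|A_2|\leq\varepsilon n/2$, or the symmetric $B$-version), then one of $V_R,V_B$ covers all but $\varepsilon n$ vertices of $G$; by maximality and the high internal connectivity of that component one can grow the corresponding matching to cover all but $4\varepsilon n$ vertices, a contradiction. If instead the two small sets lie on the same side of the bipartition (say $|A_1|,|B_1|\leq\varepsilon n/2$), then $V_1$ is almost entirely equal to $M_1$, and the forced colourings of edges from $M_1$ to $A_2\cup B_2$ produce a split colouring of $G$ except possibly on the edges inside $M_1\cup M_2$. The hypothesis that the colouring is not a split colouring then supplies at least one ``wrong-colour'' edge among these, which I can use together with a short alternating path through the overlap to augment $M_R$ or $M_B$.

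I expect this final augmentation step to be the main technical obstacle: converting a single mis-coloured edge in the overlap into a concrete swap that strictly increases $|V(M_R\cup M_B)|$ while preserving the balance of the residual. This will require distinguishing subcases depending on whether the mis-coloured edge lies between $M_1$ and $M_2$ or on the boundary with $A_i,B_i$, and using the high minimum degree to route the alternating path cleanly through $V_R\cap V_B$.
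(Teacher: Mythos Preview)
Your first step contains a genuine error. You assert that any edge from $V_\varepsilon$ to $V_R\cup V_B$ would place its $V_\varepsilon$-endpoint into $V_R$ or $V_B$, and conclude that $V_\varepsilon$ is isolated from the rest of $G$. That implication holds only for edges into $V_R\cap V_B$, and for red edges into $V_R$ or blue edges into $V_B$. But a vertex $v\in V_\varepsilon$ can send a \emph{blue} edge to some $w\in V_R\setminus V_B$: this puts $v$ in the blue component of $w$, which need not be $V_B$. Likewise $v$ can send red edges to $V_B\setminus V_R$. So $V_\varepsilon$ need not be isolated, your dichotomy ``$V_\varepsilon=\emptyset$ or $|V_\varepsilon|\ge(1-\varepsilon)n$'' fails, and the rest of your analysis (which assumes $V_\varepsilon=\emptyset$) is built on sand.

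This is not a cosmetic issue, because $V_\varepsilon$ is precisely where the split colouring lives in the paper's argument. There one shows that in the only case that cannot be handled directly, $V_R\cap V_B\cap V_1=\emptyset$, so $V_1\setminus V_\varepsilon$ splits as $V_{1R}\cup V_{1B}$ (the pure-red and pure-blue parts), while $V_2\setminus V_\varepsilon=V_R\cap V_B$ and $V_\varepsilon\cap V_2$ may be large. The four blocks $V_{1R},V_{1B},V_2\setminus V_\varepsilon,V_\varepsilon\cap V_2$ then give the split: edges from $V_{1R}$ to $V_2\setminus V_\varepsilon$ are forced red, and edges from $V_{1R}$ to $V_\varepsilon\cap V_2$ are forced blue (else that $V_\varepsilon$-vertex would land in $V_R$), and symmetrically for $V_{1B}$. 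Your case~(b), by contrast, has $M_1=V_R\cap V_B\cap V_1$ almost all of $V_1$; the edges in $M_1\times M_2$ are completely unconstrained, so the structure there is nowhere near a split colouring and the ``not split'' hypothesis gives you no wrong-colour edge to exploit. The augmentation you anticipate as ``the main technical obstacle'' therefore has no foothold.
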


\begin{proof}
Take a red connected matching $M_R$ and a blue connected matching $M_B$, together covering as much as possible of  $V(G)$. Let $V_R$, $V_B$ be the vertex sets of the respective colour components. We assume $G-V(M_R\cup M_B)$ contains more than $4\eps n$ vertices.  As above we see that
$V_R\cap V_B\neq \emptyset$, say there is a vertex $x\in V_R\cap V_B\cap V_2$.

Now, if there is an $X\in\{R,B\}$ and $i\in\{1,2\}$ such that $V_X$ covers all but a set $V'_i$ of at most $2\eps n$ vertices of $V_i$, we can proceed as in the proof of Lemma~\ref{lem:conn-matchings-robust}. That is, we take a maximal matching $M'_X$ in $G-V'_i$ in the colour corresponding to $X$, and note that $M'_X$ is connected in this colour. Then all edges in  $G-V'_i-V(M'_X)$ must have the other colour, and we can easily cover all but at most $4\eps n$ vertices of $G-V(M'_X)$ with a connected matching in this colour.

So, from now on, let us assume that there there is no choice of $X\in\{R,B\}$ and $i\in\{1,2\}$ as above. That is, we assume
\begin{equation}\label{plata}
\text{$|V_i\setminus V_X|>2\eps n$
for all $X\in\{R,B\}$ and $i\in\{1,2\}$.}
\end{equation}

Set $V_\eps:=V(G)\setminus (V_R\cup V_B).$ Since there can be no edges from $x$ to $V_\eps\cap V_1$, we have that 
\begin{equation}\label{oroII}
|V_\eps\cap V_1|\leq\eps n.
\end{equation}
 
Moreover,
\begin{equation}\label{oro}
\text{if $V_R\cap V_B\cap V_1\neq\emptyset$, then $|V_\eps\cap V_2|\leq\eps n$. }
\end{equation}

Since there is no edge between $V_R\setminus V_B$ and $V_B\setminus V_R$, there are $j\in\{1,2\}$ and $Z\in\{R,B\}$ such that $|V_j\cap (V_Z\setminus V_Y)|\leq \eps n$, where $Y\in\{R,B\}$ is such that $Y\neq Z$. In other words, 
\begin{equation*}\label{Vj}
{|V_j\setminus (V_\eps\cup V_Y)|}\leq \eps n .
\end{equation*} 

Together with~\eqref{plata}, this implies that
$|V_\eps\cap V_j|>\eps n$.
 So by~\eqref{oroII}, we have that $j=2$, and by~\eqref{oro}, we know that $V_R\cap V_B\cap V_1=\emptyset$. Hence,  $V_1\setminus V_\eps$ is covered by the two disjoint sets 
  $V_{1R}:=V_1\cap (V_R\setminus V_B)$ and $V_{1B}:=V_1\cap (V_B\setminus V_R)$. 
  
  By~\eqref{plata} and~\eqref{oroII}, we have that   $|V_{1R}|, |V_{1B}|> \eps n$. Now,  as  no edge may exist between $V_{1R}$ and $V_B\setminus V_R$, or between $V_{1B}$ and $V_R\setminus V_B$, we see that $V_2\setminus V_\eps = V_R\cap V_B$. Observe that thus, all edges between $V_2\setminus V_\eps$ and $V_{1R}$ are red, and all edges between $V_2\setminus V_\eps$ and $V_{1B}$ are blue.  
 
Also, by definition of $V_\eps$, all edges between $V_\eps\cap V_2$ and $V_{1R}$ are blue, and all edges between $V_\eps\cap V_2$ and $V_{1B}$ are red. Additionally, since there are no edges between $V_\eps\cap V_1$ and $V_2\setminus V_\eps = V_R\cap V_B$, either $V_\eps\cap V_1$ is empty, or $V_2\setminus V_\eps$ has less than $\eps n$ vertices. In the first case, we have a split colouring, and in the latter case, we can easily cover all but at most $2\eps n$ vertices of $G$ with two monochromatic connected matchings of distinct colours.
\end{proof}

\section{Covering almost everything with three cycles}\label{sec:cycles}

\subsection{Regularity preliminaries}
 
 In this subsection we introduce some very well-known concepts; the reader familiar with regularity is invited to skip this.
We start giving the standard definition of regularity.
Given $\epsilon>0$ and disjoint subsets $A, B$ of the vertex set of a graph $G$, we say that the pair $(A, B)$ is $\epsilon$\textit{-regular}, and of density $d(A,B)$,  if, for every pair $(A', B')$ with $A'\subseteq A$, $|A'|\geq \epsilon |A|$, $B' \subseteq B$, $|B'|\geq \epsilon |B|$, we have
$$ 
 \left| d(A',B')-d(A,B)\right| <\epsilon.
$$
When there is no danger of confusion, we simply say that the pair $(A,B)$ is $\eps$-regular.
It is well-known and easy to see that together with a pair $(A,B)$, its complement is $\eps$-regular, and of density $1-d(A,B)$.

For a graph $G$, we say a partition $V_0\cup V_1\cup\dots\cup V_t$ of its vertex set is $\epsilon$-regular if the following hold:
\begin{itemize}
 \item [(i)]$|V_1|=|V_2|=...=|V_t|$ and $|V_0|\leq \epsilon |V(G)|$, and
 \item [(ii)] for each $i$, $1\leq i\leq t$, all but at most $\epsilon t^2$ of the pairs $(V_i, V_j)$, $1\leq j\leq t$, are  $\epsilon$-regular. 
\end{itemize}

We state Szemer\'edi's regularity lemma~\cite{Sze78} in its form with a prepartition.
We say a partition $V_1\cup V_2\cup\dots\cup V_t$ \emph{refines} another partition $W_1\cup W_2\cup\dots\cup W_s$ if for each $i$ there is a $j$ such that $V_i\subseteq W_j$.

\begin{theorem}[Regularity lemma with prepartition]\label{regu}
For every $\eps>0$ and $m_0, s\in\mathbb N$, there is an $m_1 \in \mathbb N$ such that the following holds for each graph $G$ on $n\geq m_1$ vertices,
and with a partition $W_1,\dots, W_s$ of its vertex set.
There exists an $\eps$-regular partition $V_0\cup V_1\cup\dots\cup V_t$ such that  
\begin{itemize}
\item $V_1\cup V_2\cup\dots\cup V_t$ refines $W_1\cup W_2\cup\dots\cup W_s$, and
\item $m_0\leq t\leq m_1$.
\end{itemize}
\end{theorem}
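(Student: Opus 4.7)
The plan is to derive the prepartition version from the standard (plain) Szemer\'edi Regularity Lemma by a routine refinement. First I would fix auxiliary parameters $\eps' \ll \eps$ and $m_0' \geq m_0$ --- concretely $\eps' := \eps^2/(10s)$ and $m_0' := \max(m_0, 2s/\eps)$ --- and apply the standard regularity lemma to $G$ to obtain an $\eps'$-regular partition $U_0 \cup U_1 \cup \dots \cup U_{t'}$ with $m_0' \leq t' \leq m_1'(\eps', m_0')$, where $m_1'$ is the bound supplied by the plain lemma.

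Next I would refine the non-exceptional classes using the prepartition. For each $i \in \{1,\dots,t'\}$ and each $j \in \{1,\dots,s\}$, set $U_{i,j} := U_i \cap W_j$. With threshold $\alpha := \eps/(4s)$ and common part size $c := \lfloor \alpha |U_1| \rfloor$, I cut each $U_{i,j}$ with $|U_{i,j}| \geq \alpha |U_i|$ into as many chunks of size exactly $c$ as possible, and declare these chunks the new classes $V_1, V_2, \dots, V_t$. All leftover vertices --- those in $U_0$, those in ``small'' intersections $U_{i,j}$ with $|U_{i,j}| < \alpha |U_i|$, and the sub-$c$ remainders from each successful cut --- are collected into the new exceptional set $V_0$. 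One quickly bounds these three contributions by $\eps' n$, $s\alpha n$ and $st' c \leq \eps n/(4s) \cdot s$ respectively, so $|V_0| \leq \eps n$. By construction each $V_i$ is contained in some $W_j$, so the new partition refines $W_1,\dots,W_s$.

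To check $\eps$-regularity, I rely on the inheritance principle: if $(A,B)$ is $\eps'$-regular and $A'\subseteq A$, $B' \subseteq B$ satisfy $|A'| \geq \alpha|A|$, $|B'|\geq \alpha |B|$, then $(A',B')$ is $\max(\eps'/\alpha,\, 2\eps')$-regular. Since $\eps'/\alpha \leq \eps$ by our choices, every pair $(V_a, V_b)$ whose parent pair $(U_i, U_{i'})$ was $\eps'$-regular is $\eps$-regular. The remaining ``bad'' new pairs all arise from the at most $\eps'(t')^2$ irregular parent pairs; each parent contributes at most $(s/\alpha)^2$ refined pairs, so the total count is at most $\eps'(t')^2 (s/\alpha)^2 \leq \eps t^2$, as required. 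The final number of classes $t$ lies between $m_0$ (since $t'\geq m_0'\geq m_0$ and $t \geq t'$) and a bound depending only on $m_1'(\eps',m_0')$ and $s/\alpha$, which in turn depend only on $\eps, m_0, s$; this yields the function $m_1 = m_1(\eps,m_0,s)$.

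The main obstacle is purely bookkeeping: one must fix $\eps'$, $\alpha$ and $c$ in coordination so that the three constraints $|V_0| \leq \eps n$, (new irregular pairs) $\leq \eps t^2$, and $t \geq m_0$ hold simultaneously, with $\eps'$ and $m_0'$ chosen \emph{after} committing to $\eps, m_0, s$ to avoid circularity in invoking the plain regularity lemma. An alternative, essentially equivalent route is to rerun the standard proof of the regularity lemma starting from an initial equipartition that already refines $W_1,\dots,W_s$; I would prefer the black-box refinement above, as it isolates the only genuinely new ingredient (the chopping step) from the defect-of-index machinery.
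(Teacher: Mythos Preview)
The paper does not prove Theorem~\ref{regu}; it is quoted as a known result with a reference to Szemer\'edi~\cite{Sze78} and used as a black box in Section~\ref{sec:cycles}. There is therefore no ``paper's own proof'' to compare against.

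Your approach is the standard one and is correct in outline: apply the plain regularity lemma with stronger parameters, intersect each cluster with the prepartition, chop into equal chunks, and use the slicing/inheritance property of regular pairs. The alternative you mention---running the energy-increment argument from an initial partition that already refines $W_1,\dots,W_s$---is equally standard and perhaps cleaner, but your black-box route is fine.

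One genuine numerical slip: with your concrete choices $\eps' = \eps^2/(10s)$ and $\alpha = \eps/(4s)$, the bound on irregular child pairs does \emph{not} go through. You need $\eps'(s/\alpha)^2 \le \eps$, but $(s/\alpha)^2 = 16s^4/\eps^2$, so $\eps'(s/\alpha)^2 = 8s^3/5$, which is not small. Similarly, the pairs $(V_a,V_b)$ with $V_a,V_b$ coming from the \emph{same} parent cluster $U_i$ need to be counted among the irregular pairs (or handled separately), and your choice of $m_0'$ is not large enough to absorb them. Both issues are purely bookkeeping, as you say: taking, e.g., $\eps'$ of order $\eps^3/s^4$ and $m_0'$ of order $s^4/\eps^3$ repairs the arithmetic without changing the argument.
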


It is usual to define a reduced graph of a graph $G$, for a given $\eps$-regular partition, and a threshold density $\rho$. 
This is the graph $R_{G}=([t], E(R_{G}))$ which has an edge for each $\eps$-regular pair of density at least $\rho$. (Note that in spite of the notation~$R_G$, the reduced graph $R_G$ depends on $G$, on $\rho$, and on the given partition.)

 \subsection{Proof of Theorem~\ref{thm:cycles}}

Theorem~\ref{regu} applied with parameters $\eps\ll \delta$, $m_0=1/\eps$ and  $s=2$ and $s=3$, gives two values for $m_1$ of which we take the larger one.
Let $G$ be a fair complete multipartite graph on $n>m_1$ vertices whose edges are coloured red and blue. We assume the colouring is not $\delta$-close to a split colouring. 

Use Lemma~\ref{lem:reductiontotripartite} to obtain a spanning fair subgraph $G'\subseteq G$ that is complete tripartite or complete bipartite (possibly $G'=G$). In the case that  the smallest partition class of $G'$ has less than $\delta n/20$ vertices (and thus $G'$ is tripartite), we delete this  class, and the same number of vertices from the other classes, in a way that the obtained bipartite graph (which we still call $G'$) is balanced.
By the proof of Lemma~\ref{lem:reductiontotripartite}, we know that $|E(G)\setminus E(G')|<\delta n^2/10$. 

Now, Theorem~\ref{regu} applied to the red subgraph of $G'$ yields a partition of $V(G')$ refining the bi- or tripartition which is $\eps$-regular in both colours. Using the majority colouring of each pair (that is, we use $\rho=1/2$, and in case of a tie we give the edge any colour), we can work with a two-coloured reduced graph~$R_{G'}$. 

 Note that the non-neighbours of a vertex $v\in V(R_{G'})$ correspond to irregular pairs containing $v$.
So, there are at most $\sqrt\eps t$ vertices $v$ in $R_{G'}$ that have more than $\sqrt\eps t$ non-neighbours in the other partition class(es).
We discard these vertices from the reduced graph $R_{G'}$ to ensure that each vertex of the remaining graph $R'_{G'}$ has at most $\sqrt\eps t$ non-neighbours in the other partition class(es). 

Observe that the size of any class $C_i$ of the bi- or tripartition lies between 
\begin{equation}\label{parti-class}
\frac{|\hat C_i|-|V_0|}{|V_1|}-\sqrt\eps t\geq \frac{|\hat C_i|}{|V_1|} -2\sqrt\eps t\text{ \  \ and \ \ }\frac{|\hat C_i|}{|V_1|},
\end{equation}
 where $\hat C_i$ is the class of the bi-/tripartition of $G'$ corresponding to $C_i$.  So, since~$G'$ is fair, and since $|V_0|\leq \eps n,$ we have that $|C_1|\leq |C_2|+3\sqrt\eps t$, or $|C_1|\leq |C_2|+|C_3|+2\sqrt\eps t$, respectively, if $C_1$ is the largest partition class of $R'_{G'}$. 
If necessary, we discard at most $3\sqrt\eps t$ vertices from $C_1$ to make the remaining  graph $R''_{G'}$ fair.

Resuming, we have obtained a bi- or tripartite subgraph $R''_{G'}$ of $R_{G'}$  on  at least $(1-4\sqrt\eps )t$ and at most $t$ vertices.
Let the partition classes of $R''_{G'}$ be denoted $C_1'$, $C_2'$, and, if applicable, $C_3'$.
In $R''_{G'}$, every vertex of $C_i'$ has at most $\sqrt \eps t$ many non-neighbours outside of $C_i'$.
Since $R''_{G'}$ is fair, $t-|C_i'| \ge t/2$, and so every vertex of $C_i'$ has degree at least
\[
t-|C_i'|-\sqrt\eps t \ge (1-2\sqrt\eps)(t-|C_i'|),
\]
where we sum the degree over both colours.
We may view $R''_{G'}$ as a reduced graph for a subgraph $G''$ of $G'$, with $|V(G'')|\geq (1-5\sqrt\eps )n$ (allowing $\sqrt\eps n \geq \eps n$ for the exceptional set $V_0$). Note that 
\begin{equation}\label{fewedgeslost}
|E(G)\setminus E(G'')|\leq \delta n^2/10 + 5\sqrt\eps n^2\leq \delta n^2/5.
\end{equation}

Observe that in the case that $G'$ is bipartite, the colouring in the reduced graph $R''_{G'}$ might be a split colouring, even if the colouring in $G$ is not a split colouring. But, in this case, note that if we change the colouring of any edge of $R''_{G'}$, we no longer have a split colouring.
As all our arguments work the same whether we take the threshold for the colouring of $E(R''_{G'})$ to be $1/2$ or $\delta/2$, we may assume that we can either change the colouring of $R''_{G'}$ in a justified way so that the obtained colouring is not a split colouring, or that $R''_{G'}$ has a split colouring, and each red edge of $R''_{G'}$ corresponds to a pair in $G''$ which has blue density $<\delta/2$, and vice versa for the blue edges. So, deleting at most $\delta |E(G'')|/2$ edges from $G''$ we can make all pairs monochromatic. Thus, by~\eqref{fewedgeslost}, this means $G$ is $\delta$-close to a split colouring, a contradiction.

Therefore, we may assume the colouring in  $R''_{G'}$ is not a split colouring. Since by~\eqref{parti-class}, the classes of the bi- or tripartition of $R''_{G'}$ have size at least  $\delta t/20 - 2\sqrt\eps n$, we may apply Lemma~\ref{lem:conn-matchings-robust} or Lemma~\ref{lem:conn-matchings-robust-for-bip} to $R''_{G'}$ to obtain two connected matchings, one in each colour, which together cover all but at most $72\sqrt\eps t$ vertices.
Now we use a crucial and well-known lemma that has appeared in similar forms before. In its original form it is due to \L uczak~\cite{Luc99}. 
The version we use here is close to the one given in from Section 4 of~\cite{GRSS11}.

\begin{lemma}\label{lem_connmatchTOcycles}
Let $R$ be the reduced two-coloured graph of a two-coloured graph~$H$, for some $\gamma$-regular partition, where each edge of $R$ corresponds to a $\gamma$-regular pair of density at least $\sqrt\gamma$.\\
If all but at most $72 \gamma |V(R)|$ vertices of $R$ can be covered with two disjoint connected monochromatic matchings, one of each colour, then $H$ has two disjoint monochromatic cycles, one of each colour, which together cover at least $(1-100\sqrt\gamma)|V(H)|$ vertices of $H$.
\end{lemma}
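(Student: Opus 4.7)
The plan is to prove Lemma~\ref{lem_connmatchTOcycles} via the standard \L uczak-style blow-up of connected matchings into monochromatic cycles. I would build the red cycle from the red matching $M_R\subseteq R$ and the blue cycle from the blue matching $M_B\subseteq R$ independently; since $M_R$ and $M_B$ are vertex-disjoint in $R$, the two resulting cycles in $H$ will automatically be vertex-disjoint. Let me describe the red case; the blue case is identical.

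Fix the red connected matching $M_R = \{e_1,\dots,e_k\}\subseteq R$, and let $C\subseteq R$ be the red component containing $M_R$. Pick a closed walk $W$ in $C$ that traverses each $e_i$ exactly once; this exists because $C$ is connected and we may freely revisit vertices. The walk $W$ visits a sequence of regular pairs in $H$, alternating between ``matching pairs'' (coming from some $e_i$) and ``linking pairs'' (coming from intermediate edges of $W$). The key tool is the standard regularity fact: in any $\gamma$-regular pair $(A,B)$ of density at least $\sqrt\gamma$, for any sets $A_0\subseteq A$, $B_0\subseteq B$ of size at least, say, $\sqrt\gamma|A|$, one can find a path of any prescribed even length up to $(1-3\sqrt\gamma)(|A|+|B|)$ with endpoints in $A_0$ and $B_0$. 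I will apply this to each matching pair to extract a long path spanning almost all of the two clusters, while reserving a tiny pool (of size a small multiple of $\sqrt\gamma|V_1|$, where $|V_1|$ is the cluster size) at each end to serve as anchor sets for the linking. For each linking pair traversed by $W$, I use the same fact to pick a short path (of length $2$ or $4$) through it, connecting the reserved pools of two consecutive matching pairs. Concatenating around $W$ and closing up produces a single red cycle in $H$.

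To bound the number of uncovered vertices, I would account for four sources of loss. First, the exceptional set $V_0$ contributes at most $\gamma|V(H)|$. Second, the clusters not covered by $M_R\cup M_B$ contribute at most $72\gamma|V(R)|\cdot |V_1|\le 72\gamma|V(H)|$. Third, within each matching pair I lose at most $O(\sqrt\gamma)$ fraction of the pair's vertices to the reserved pools plus the leftover of the regularity fact. Fourth, each linking step discards at most a constant number of vertices from a linking pair, but since the walk $W$ has length $O(|V(R)|)$, this contributes at most $O(\sqrt\gamma|V(H)|)$ after choosing $W$ reasonably (e.g.\ by doubling a spanning tree of $C$, so $W$ has length at most $2|V(C)|\le 2|V(R)|$). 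Summing these contributions gives coverage of at least $(1-100\sqrt\gamma)$ of all vertices spanned by the matchings together with the exceptional set, as required.

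The main obstacle is the bookkeeping around the reserved anchor sets: one must be careful that at each matching pair the two anchor sets at its two endpoints are disjoint and still large enough to apply the regularity fact, and simultaneously that linking pairs are not used twice with conflicting anchor choices. This is handled by choosing the walk $W$ once and for all, declaring in advance at each cluster which small subset will serve as the anchor for each traversal of that cluster by $W$, and making all pools of size $\sqrt\gamma|V_1|$ so that even after reserving a bounded number of them (bounded by the maximum number of times $W$ visits a single cluster, which is $O(1)$ for the doubled-tree walk) enough of the cluster remains to produce a nearly spanning path. Once this setup is fixed, each individual path extraction and each linking step is a direct application of the standard regularity path lemma, and the loss estimates combine to $100\sqrt\gamma|V(H)|$ with plenty of slack.
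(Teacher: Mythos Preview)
Your approach is the standard \L uczak blow-up and is essentially what the paper sketches. Two technical points need correcting, however.

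First, the claim that the two cycles are ``automatically vertex-disjoint'' because $M_R$ and $M_B$ are vertex-disjoint in $R$ is wrong. Your red cycle does not live only in the clusters of $M_R$: the linking segments of the walk $W$ pass through arbitrary clusters of the red component, and these may well be clusters belonging to $M_B$. So a red linking path can eat vertices out of a blue matching cluster, and conversely. The standard fix---and what the paper's sketch explicitly says---is to lay down \emph{all} short linking paths for both colours first (this uses only a bounded number of vertices per cluster, since $|V(R)|$ is bounded independently of $|V(H)|$), and only afterwards blow up the matching edges into long paths inside the remaining $(1-o(1))$-fraction of each matching cluster. Building the two cycles completely independently, as you propose, does not guarantee disjointness without further argument.

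Second, your claim that the doubled-tree walk visits each cluster $O(1)$ times is false: in a spanning tree that is a star, the centre is visited roughly $|V(R)|$ times. What is true, and what you actually need, is that the total length of the walk is at most $2|V(R)|$, so the total number of vertices consumed by all linking paths is $O(|V(R)|)$, which is negligible compared to a single cluster's size since $|V(R)|$ is bounded. You do not need (and cannot get) a per-cluster $O(1)$ bound; the global bound suffices once you do the linking first as above.

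With these two corrections the argument goes through, and it coincides with the paper's sketch.
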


For completeness, let us outline  a proof of Lemma~\ref{lem_connmatchTOcycles}. 

\begin{proof}[Sketch of a proof of Lemma~\ref{lem_connmatchTOcycles}]
We first connect in $H$ the pairs corresponding to matching edges with monochromatic paths, following their connections in $R$. 
We do this simultaneously for both colours. 
Note that in total, these paths consume only a constant number of vertices of~$H$.
Then we connect the monochromatic paths using the matching edges, blowing up the edges to long paths, where regularity ensures we can use all but a small fraction of the corresponding pairs. 
This gives the desired cycles.
The above argumentation is also explained, rather detailed, in the proof of the main result of~\cite{GRSS06b}.
\end{proof}

Applying Lemma~\ref{lem_connmatchTOcycles} to the graph $R''_{G'}$
gives the desired two cycles which cover all but at most $100\sqrt\eps n < \delta n$ of our graph $G$. This finishes the proof of Theorem~\ref{thm:cycles}.

\section{Covering all vertices with 14 cycles}\label{sec:exact-cycles}

\subsection{Preliminaries}

Call a balanced bipartite subgraph $H$ of an $2n$-vertex graph $\eps$-\emph{hamiltonian}, if any balanced bipartite subgraph of $H$ with at least
$(1-\eps) n$ vertices in each partition class is hamiltonian. The next lemma is a straighforward combination of results of Haxell~\cite{Hax97} and Peng, R\"odl and Ruci\'nski~\cite{PRR02}.
A proof of this lemma is given in our companion paper~\cite{LSS14}.

\begin{lemma}\cite{LSS14}
\label{lem:eps-hamiltonian}
For any $1 >\gamma > 0,$ there is an $n_0 \in
\mathbb{N}$ such that any
balanced bipartite graph $G$ on at least $2n \geq 2n_0$ vertices and of
edge density at least $\gamma$ has an $\gamma/4$-hamiltonian subgraph
of size at least $\gamma^{3024/\gamma} n/3$.
\end{lemma}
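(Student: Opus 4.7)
The plan is to combine two known ingredients: a quantitative dense-regular-pair theorem of Peng, R\"odl and Ruci\'nski~\cite{PRR02} and a robust-Hamiltonicity result of Haxell~\cite{Hax97} for super-regular bipartite pairs. The target subgraph size $\gamma^{3024/\gamma} n/3$ is essentially the size guaranteed by the Peng--R\"odl--Ruci\'nski bound, up to the constant factor $1/3$ we reserve to absorb the cleaning step. So the proof should amount to: find a big dense regular pair, clean it to become super-regular, then invoke Haxell.

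First I would apply the result of Peng, R\"odl and Ruci\'nski to the balanced bipartite graph $G$ on $2n$ vertices of density at least $\gamma$. For a suitable small regularity parameter $\eps = \eps(\gamma)$ (chosen much smaller than $\gamma/4$, e.g.\ $\eps \le \gamma^2/c$ for an appropriate constant $c$), their theorem yields subsets $A \subseteq V_1$ and $B \subseteq V_2$ with $|A|=|B| \ge \gamma^{3024/\gamma}\, n$ such that $(A,B)$ is an $\eps$-regular pair with density at least $\gamma/2$. This is where the concrete exponent $3024/\gamma$ (and the role of the density threshold $\gamma$) appears; the statement is tailored to match exactly the bound they prove.

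Next I would perform the standard super-regularity cleaning: iteratively remove any vertex from $A$ whose degree into $B$ is less than $(\gamma/2 - \eps)|B|$, and symmetrically from $B$. Standard calculation (using $\eps$-regularity) shows at most $\eps |A|$ vertices are deleted from each side, so the remaining pair $(A',B')$ has sizes at least $(1-\eps)|A|$, is still $2\eps$-regular, and has minimum degree at least $(\gamma/2-2\eps)|A'|$ on both sides. In particular, $|A'|, |B'| \ge \gamma^{3024/\gamma} n / 3$ for $n$ large enough, as required in the statement.

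Finally, I would apply Haxell's theorem, which guarantees that any sufficiently super-regular pair contains a Hamilton cycle; in fact, by choosing $\eps$ small relative to $\gamma$, her argument applies uniformly to any balanced bipartite subgraph of $(A',B')$ that retains at least $(1-\gamma/4)|A'|$ vertices in each side, since such a subgraph is still super-regular with slightly worse but still admissible parameters. Therefore the induced subgraph $H := G[A' \cup B']$ is $\gamma/4$-hamiltonian, proving the lemma. The only real bookkeeping is to propagate $\eps = \eps(\gamma)$ through the cleaning step and verify it is still small enough for Haxell's hamiltonicity threshold; this is the main (minor) obstacle, and a detailed verification of exactly these constants is carried out in the companion paper~\cite{LSS14}.
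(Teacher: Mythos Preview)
Your proposal is correct and follows precisely the approach the paper indicates: the lemma is stated as ``a straightforward combination of results of Haxell~\cite{Hax97} and Peng, R\"odl and Ruci\'nski~\cite{PRR02}'', with the detailed proof deferred to the companion paper~\cite{LSS14}. Your outline---applying Peng--R\"odl--Ruci\'nski to locate a large $\eps$-regular pair of density at least $\gamma/2$ and size $\gamma^{3024/\gamma}n$, cleaning to super-regularity (and rebalancing), then invoking Haxell's robust Hamiltonicity for super-regular pairs---is exactly this combination, and the factor $1/3$ indeed just absorbs the cleaning and balancing losses.
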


In order to absorb vertices not covered with the cycles given by Theorem~\ref{thm:cycles}, we use the following result.

\begin{lemma}[Gy\'{a}rf\'{a}s, Ruszink\'o, S\'ark\"ozy, and Szemer\'edi~\cite{GRSS06b}]
\label{lem:somewhat-unbalanced}
There is an $n_0 \in \mathbb N$ such that for $n\geq n_0$ and $m \leq \frac{n}{(8r)^{8(r+1)}}$ and for any 
colouring of the edges of $K_{n,m}$ with~$r$ colours, there are $2r$ vertex-disjoint monochromatic cycles that cover the $m$ vertices of the smaller side.
\end{lemma}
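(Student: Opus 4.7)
The plan is to prove Lemma~\ref{lem:somewhat-unbalanced} by induction on $r$, letting $M$ denote the $m$-vertex side and $N$ the $n$-vertex side of $K_{n,m}$. The base case $r=1$ is immediate: the graph is monochromatic and since $m \ll n$ a Hamilton cycle of any $K_{m,m}$-subgraph of $K_{n,m}$ covers $M$ with one cycle, so in particular two cycles suffice.

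For the inductive step, the first move is a double pigeonhole. Every $v\in M$ has $n$ edges split into $r$ colour classes, so some colour at $v$ has size $\ge n/r$; call it a \emph{majority colour of $v$}. By pigeonhole over $M$, some colour $c^\ast$ is a majority colour of at least $m/r$ vertices of~$M$; call this subset $M^\ast$. Every vertex of $M^\ast$ thus has at least $n/r$ colour-$c^\ast$ neighbours in $N$. The plan is to cover $M^\ast$ with at most two colour-$c^\ast$ cycles and then apply the inductive hypothesis to $M\setminus M^\ast$ with one fewer colour.

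The central technical step is the cycle construction. Let $H$ be the bipartite colour-$c^\ast$ graph on $M^\ast \cup N$; every vertex of $M^\ast$ has $H$-degree at least $n/r$, while $|M^\ast|\le m \ll n/r$. The extreme $N$-slack allows a greedy construction: build an alternating path $v_1 x_1 v_2 x_2\dots$ with the $v_i\in M^\ast$ and $x_i\in N$ distinct, each $x_i$ chosen as an unused colour-$c^\ast$ neighbour of $v_i$ that also sends a colour-$c^\ast$ edge to some not-yet-visited vertex of $M^\ast$. At each step the number of unused colour-$c^\ast$ neighbours of $v_i$ is at least $n/r - |M^\ast| \ge n/(2r)$, dwarfing the number of previously visited vertices, and a short double-counting/Hall-style argument shows that a suitable $x_i$ must exist unless the colour-$c^\ast$ graph $H$ disconnects $M^\ast$. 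In the disconnected case we simply build one cycle per large component, which is why we budget up to two cycles at this step and thus get $2r$ in total. The path(s) are closed up by two extra $N$-vertices, and the whole phase consumes $O(|M^\ast|)$ vertices of $N$.

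For the induction one then passes to the residual bipartite graph on $(M\setminus M^\ast)\cup N'$, where $N' = N$ minus the $O(|M^\ast|)$ vertices consumed, edge-coloured with the remaining $r-1$ colours: colour $c^\ast$ can be dropped because every $v\in M\setminus M^\ast$ has $c(v)\ne c^\ast$ and thus still has at least $n/(r-1)$ neighbours in some other colour, of which at least $|N'|/(r-1)$ survive in $N'$. The threshold $m\le n/(8r)^{8(r+1)}$ is engineered precisely so that after the substitutions $m\mapsto m' \le m(1-1/r)$ and $n\mapsto n' = n-O(m)$ the analogous inequality $m'\le n'/(8(r-1))^{8r}$ still holds, and the inductive hypothesis supplies the remaining $2(r-1)$ cycles. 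The main obstacle throughout is the cycle-construction step: converting a one-sided minimum-degree condition in an extremely unbalanced bipartite graph into a covering of the small side by a bounded number of cycles, while carefully tracking the number of $N$-vertices consumed so that the parameters still satisfy the inductive hypothesis in the next round.
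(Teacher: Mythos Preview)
The paper does not prove this lemma at all: it is quoted from~\cite{GRSS06b} and used as a black box in Section~\ref{sec:exact-cycles}. So there is no ``paper's own proof'' to compare against, only the original source.

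That said, your inductive scheme has a genuine gap. After covering $M^\ast$ by colour-$c^\ast$ cycles, you pass to the residual bipartite graph on $(M\setminus M^\ast)\cup N'$ and claim it is ``edge-coloured with the remaining $r-1$ colours'' because every $v\in M\setminus M^\ast$ has $c(v)\neq c^\ast$. But this is a statement about the \emph{majority colour} of each such $v$, not about the colouring of its edges: vertices of $M\setminus M^\ast$ still send edges of colour $c^\ast$ into $N'$, so the residual graph is still genuinely $r$-coloured. The inductive hypothesis, as stated, applies to an $(r-1)$-edge-colouring of $K_{n',m'}$, which you do not have. Your sentence ``thus still has at least $n/(r-1)$ neighbours in some other colour'' is also not what you know; at best each $v\in M\setminus M^\ast$ has $\ge n/r$ neighbours in its own majority colour $c(v)\neq c^\ast$, which is weaker and in any case does not reinstate the hypothesis of the lemma for $r-1$.

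The natural fix is to abandon the induction on $r$ and argue directly: assign each $v\in M$ a majority colour, obtaining a partition $M=M_1\cup\cdots\cup M_r$, and then cover each $M_i$ by at most two colour-$i$ cycles, processing the colour classes sequentially and removing the consumed $N$-vertices as you go. The enormous slack $m\le n/(8r)^{8(r+1)}$ guarantees that each $v\in M_i$ retains colour-$i$ degree far exceeding $|M_i|$ throughout. Your path-building idea is in the right spirit for this step, but the claim that ``in the disconnected case we simply build one cycle per large component'' and hence need at most two cycles is not justified: a priori the colour-$c^\ast$ graph on $M^\ast\cup N$ could split $M^\ast$ into more than two pieces with pairwise disjoint neighbourhoods (up to $r$ of them, since each neighbourhood has size $\ge n/r$). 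You need an actual argument---for instance, a one-sided degree condition strong enough to force a single cycle through each $M_i$, or a more careful two-cycle construction---rather than an appeal to connectivity.
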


\subsection{Proof of Theorem~\ref{thm:covering-all}}

Let $G$ be a fair complete $k$-partite graph on $n\geq n_0$ vertices, whose edges are 2-coloured. We assume $n_0$ to be large enough, its value may be extracted from the proof.
By Lemma~\ref{lem:reductiontotripartite}, we may assume that $2 \le k \le 3$.
Let $V_1$, $V_2$, and possibly $V_3$, be the partition classes of $G$, where we assume that $|V_1|\ge|V_2|\ge|V_3|$.
Set $\delta := 2^{-10^4}$.

For technical reasons, we  split the argument into three cases.
The explicit value of $\delta$ plays a role only in the last case of the proof.

We first discuss the case when 
\begin{equation}\label{eq:super-fair}
|V_1| \le |V_2| + |V_3| - \delta n \mbox{ and } |V_3| \ge \delta n.
\end{equation}
We pick disjoint subsets $U_1^1,U_1^2 \subseteq V_1$, $U_2^1,U_2^3 \subseteq V_2$, and $U_3^2,U_3^3 \subseteq V_3$ of size $\lfloor \delta n / 4 \rfloor$ each.
Due to~\eqref{eq:super-fair}, it holds that $G - \{U_i^j : i,j \in \{1,2,3\}\}$ is fair.
Indeed, every graph $H$ with $G - \{U_i^j : i,j \in \{1,2,3\}\} \subseteq H \subseteq G$ is fair, a fact that we will exploit later. 

Let $G_1 := G[U_1^1, U_2^1]$, $G_2 := [U_1^2, U_3^2]$, and $G_3 := G[U_2^3, U_3^3]$. Assuming $n_0$ is large enough,
we apply Lemma~\ref{lem:eps-hamiltonian} with $\gamma:=1/2$ to $G_i$, $i=1,2,3$, considering only the edges of the respective majority colour. This gives  monochromatic $1/8$-hamiltonian (and thus balanced) subgraphs $H_i=[W^i_1, W^i_2]$ of $G_i$, $i=1,2,3$ with
\begin{equation}\label{eqn:orderofHi}
|W^i_1|  \ge {\delta n}/{2^{6053}}.
\end{equation}

Now let $H := G - \bigcup_{i=1}^3 V(H_i)$ and recall that $H$ is fair since $G - \{U_i^j : i,j \in \{1,2,3\}\} \subseteq H \subseteq G$.
By~\eqref{eq:super-fair} and by the choice of the sets $U^i_j$, each partition class of $H$ has size at least $\delta n / 2$. Let $\delta':={\delta }/{2^{6148}}$.
Assuming $n_0$ is large enough, we know $H$ is not $\delta'$-close to a split colouring, and Theorem~\ref{thm:cycles} gives that all but at most $\delta' n$ vertices of $H$ can be partitioned into two monochromatic cycles.
If the set $X \subseteq V(G')$ of uncovered vertices has odd cardinality, we cover one of its vertices with a trivial cycle, and from now on assume $|X|$ is even.

Assume $|X \cap V_3| \ge |X \cap V_1|,|X \cap V_2|$ (the other cases are similar).
Partition $X$ into equal-sized sets $X_1,X_2$ with 
$X \cap V_i \subseteq X_i$ for $i=1,2$.
Assuming $n_0$ is sufficiently large, the choice of $\delta'$ and~\eqref{eqn:orderofHi} give that 
\begin{equation}\label{eq:eq:eq}
|X_1|  \le {\delta'n}/{2} \le {|W^1_2|}/{16^{24}}.
\end{equation}
 So, Lemma~\ref{lem:somewhat-unbalanced} yields a set $\mathcal C$ of eight vertex-disjoint monochromatic cycles in $[X_1,W^1_2]$ and $[X_2,W^1_1]$ that together cover $X_1 \cup X_2$.
As we may assume that $\mathcal C$ does not contain any trivial cycles,  $V(\bigcup \mathcal C)\cap V(H_1)$ is balanced, and each side has at most $ {|W^1_2|}/{16^{24}}$ vertices, by~\eqref{eq:eq:eq}.
Since $H_1$ is $(1/8)$-hamiltonian, the monochromatic graph $H_1-V(\bigcup \mathcal C)$ thus has a hamilton cycle.
Also, $H_2$ and $H_3$ each admit a hamilton cycle.
In total, we covered $V(G)$ using at most $2+1+8+3=14$ vertex-disjoint monochromatic cycles.
\medskip

We now discuss the case when $k=2$.
We use the same method as above. 
Observe that now, we only need to find one $(1/8)$-hamiltonian subgraph of $G$ (as opposed to the three graphs $H_1,H_2,H_3$ above).  
Thus in the last step we will use  only one instead of three hamilton cycles. 
However, we might need three cycles to cover $G$ almost entirely, since the colouring of $G$ might be close to a split-colouring (a covering with 3 cycles does exist by the remark after Theorem~\ref{thm:cycles}).
Since we may assume that none of these three cycles is trivial, we may assume the set $X$ of vertices not covered with the three cycles to have even cardinality.
In this way we avoid another cycle. 
This means we can cover $G$ with $3+8+1=12$ vertex-disjoint monochromatic cycles in total.
\medskip

To complete the proof of Theorem~\ref{thm:covering-all}, we now consider the case when $k=3$, but~\eqref{eq:super-fair} is violated.
Essentially, we proceed as in the case of $k=2$.
If $n$ is odd, we pick a single vertex $v$ from $V_1$ as a cycle.
Since $G-v$ is fair, we may simply assume that $n$ is even.

No matter whether we have $|V_3| < \delta n$, or we have $|V_3| \ge \delta n$ and $|V_1| > |V_2| + |V_3| - \delta n$, we proceed as follows. Delete a set $W$ of $|V_2| + |V_3|- |V_1| < \delta n$ vertices from $V_3$. Consider the bipartite graph $G'$ spanned by $V_1$ and the remains of $V_2\cup V_3$, which we call $V'_2$. Note that $G'$ is complete bipartite and balanced.

Let $U_1 \subseteq V_1$ and $U_2 \subseteq V_2$ have size $\lceil n/4\rceil$ each.
(Note that clearly $|V_2|\geq n/4$.) Assuming $n_0$ is large enough, we may
apply Lemma~\ref{lem:eps-hamiltonian} with $\gamma=1/2$ to the graph induced by  the majority colour to find a $(1/8)$-hamiltonian subgraph $H=[U'_1,U'_2]$ of $G'[U_1 \cup U_2]$, with 
$|U'_1|=|U'_2| \ge n/2^{6052}$.
Assuming $n_0$ is large enough, the remark after
Theorem~\ref{thm:cycles} gives three monochromatic vertex-disjoint cycles that cover all but a set $W'$ of at most $\delta n$ many vertices of $G'- V(H)$.
We may assume that none of these cycles is trivial, and thus $|W' \cap V_1| = |W' \cap V_2'|$.

Partition $W \cup W'$ into two equal-sized sets $W_1$ and $W_2$ such that for $i=1,2$, all edges between $W_i$ and $U_i'$ are present. As
$$16^{24} |W_1|  < \delta  n /2^{97}\leq n/2^{6052} \le |U_2'|,$$
we may apply Lemma~\ref{lem:somewhat-unbalanced} to cover each of $W_1$, $ W_2$ with four vertex-disjoint monochromatic cycles, which we again assume to be non-trivial.
Since $H$ is $(1/8)$-hamiltonian, the yet unused vertices of $H$ span a hamiltonian cycle. In total, we covered $G$ with $1+3+8+1=13$ vertex-disjoint monochromatic cycles.

\section{Conclusion}\label{sec:conclu}

Our Theorem~\ref{thm:mainresult} and the earlier result of Pokrovskiy together cover all cases of monochromatic path covers in $2$-edge-coloured multipartite graphs. 
It would be natural to investigate the same problem for more colours, in the spirit of Conjecture~\ref{con:gyarfas}. 
For bipartite graphs whose edges are coloured with $r$ colours, Pokrovskiy~\cite{Pok14} conjectures that there is a partition into $2r-1$ monochromatic paths, and he shows this number is best possible. 
For $k$-partite graphs with $k\geq 3$ a partition into less monochromatic paths, perhaps $r$ paths, might be possible, in analogy to the case $r=2$ we treated here.

\medskip

For cycle partitions, as said in the introduction, we believe that Theorem~\ref{thm:cycles} is not best possible. 
However, recalling the example given in the introduction, a split colouring between a partition class of size $n/2$ and the rest of the graph,
it is not always possible to cover {\it all} vertices of any large fair multipartite graph with two vertex-disjoint monochromatic cycles.
This remains true even if the colouring of this graph is far from a split colouring (the other classes might have size $n/4$ each). 

Even if a partition class containing half of the vertices of the graph is forbidden, such constructions are possible.
Indeed, given a properly split-coloured balanced complete bipartite graph one can add a third partition class consisting of a single vertex $v_3$ seeing only one colour. Then the obtained graph cannot be partitioned into two monochromatic cycles. One can even add a fourth partition class consisting of a single vertex $v_4$ seeing only the other colour, while giving $v_3v_4$ any colour. Still the obtained graph has no partition into two monochromatic cycles.

\begin{problem}\label{prob:cyles}
For $k\geq 3$, under which conditions does a fair complete $k$-partite graph with a 2-colouring of the edges admit a partition into two monochromatic cycles?
\end{problem}

One candidate for a sufficient condition in Problem~\ref{prob:cyles} could be balancedness, that is, having partition classes of equal size.
In any case, we think that considering balanced multipartite graphs is a reasonable restriction that might be of its own interest.

\medskip

The next natural step is to extend Theorems~\ref{thm:pokrovskiy} and~\ref{thm:cycles} to more colours. We have seen here that three disjoint monochromatic cycles can cover all but $o(n)$ of the vertices of any large enough multipartite graph, whose edges are two-coloured (where three cycles are only needed if the colouring is very close to a split colouring). 
In~\cite{LSS14}, together with Richard Lang, we prove that in  large enough  3-edge coloured balanced complete bipartite graphs,  five disjoint monochromatic  cycles suffice to cover the graph almost entirely. 

In analogy to Pokrovskiy's conjecture for path covers~\cite{Pok14} mentioned above, it might always be possible to cover almost all vertices of any large enough multipartite graph, whose edges are $r$-coloured, with $2r-1$ disjoint monochromatic cycles. Perhaps the number of cycles needed can even be dropped to  $r$ in $k$-partite graphs with $k\geq 3$.
Maybe this is even possible in bipartite graphs with a colouring sufficiently far from a specific problematic colouring.


\bibliographystyle{amsplain}
\bibliography{monochromatic-cover}

\providecommand{\bysame}{\leavevmode\hbox to3em{\hrulefill}\thinspace}
\providecommand{\MR}{\relax\ifhmode\unskip\space\fi MR }
\providecommand{\MRhref}[2]{%
  \href{http://www.ams.org/mathscinet-getitem?mr=#1}{#2}
}
\providecommand{\href}[2]{#2}
\begin{thebibliography}{10}

\bibitem{A08}
P.~Allen, \emph{Covering two-edge-coloured complete graphs with two disjoint
  monochromatic cycles}, Combin. Probab. Comput. \textbf{17} (2008), 471--486.

\bibitem{BBGGS}
J.~Balogh, S.~Bar\'at, D.~Gerbner, A.~Gyarf\'as, and G.~S\'ark\"ozy,
  \emph{Partitioning 2-edge-coloured graphs by monochromatic paths and cycles},
  preprint.

\bibitem{BT10}
S.~Bessy and S.~Thomass\'e, \emph{Partitioning a graph into a cycle and an
  anticycle: a proof of {L}ehel's conjecture}, J. Combin. Theory Ser. B
  \textbf{100} (2010), 176--180.

\bibitem{CS14}
D.~Conlon and M.~Stein, \emph{Monochromatic cycle partitions in local edge
  colourings}, arXiv:1403.5975 [math.CO], 2014.

\bibitem{EGP91}
P.~Erd\H{o}s, A.~Gy\'arf\'as, and L.~Pyber, \emph{Vertex coverings by
  monochromatic cycles and trees}, J. Combin. Theory Ser. B \textbf{51} (1991),
  90--95.

\bibitem{GG67}
L.~Gerencs\'er and A.~Gy\'arf\'as, \emph{On {R}amsey-type problems}, Annales
  Univ. E\"otv\"os Section Math. \textbf{10} (1967), 167--170.

\bibitem{Gya83}
A.~Gy\'arf\'as, \emph{Vertex coverings by monochromatic paths and cycles}, J.
  Graph Theory \textbf{7} (1983), 131--135.

\bibitem{Gya89}
\bysame, \emph{Covering complete graphs by monochromatic paths}, Algorithms and
  Combinatorics \textbf{8} (1989), 89--91.

\bibitem{GL73}
A.~Gy\'arf\'as and J.~Lehel, \emph{A {R}amsey-type problem in directed and
  bipartite graphs}, Periodica Math. Hung. (1973), 299--304.

\bibitem{GRSS06}
A.~Gy\'arf\'as, M.~Ruszink\'o, G.~S\'ark\"ozy, and E.~Szemer\'edi, \emph{An
  improved bound for the monochromatic cycle partition number}, J. Combin.
  Theory Ser. B \textbf{96} (2006), 855--873.

\bibitem{GRSS06b}
\bysame, \emph{One-sided coverings of coloured complete bipartite graphs},
  Algorithms and Combinatorics (2006), 133--144.

\bibitem{GRSS11}
\bysame, \emph{Partitioning $3$-coloured complete graphs into three
  monochromatic cycles}, Electron.~J.~Combin. \textbf{18} (2011), Paper 53, 16
  pp.

\bibitem{Hax97}
P.~Haxell, \emph{Partitioning complete bipartite graphs by monochromatic
  cycles}, J. Combin. Theory Ser. B \textbf{69} (1997), 210--218.

\bibitem{LSS14}
R.~Lang, O.~Schaudt, and M.~Stein, \emph{Partitioning edge-coloured complete
  bipartite graphs into monochromatic cycles}, in preparation.

\bibitem{Luc99}
T.~{\L}uczak, \emph{${R}({C}_n ,{C}_n ,{C}_n ) \le (4 + o(1))n$}, J. Combin.
  Theory Ser. B \textbf{75} (1999), 174--187.

\bibitem{LRS98}
T.~{\L}uczak, V.~R\"odl, and E.~Szemer\'edi, \emph{Partitioning two-coloured
  complete graphs into two monochromatic cycles}, Combin. Probab. Comput.
  \textbf{7} (1998), 423--436.

\bibitem{PRR02}
Y.~Peng, V.~R\"odl, and A.~Ruci{\'{n}}ski, \emph{Holes in graphs},
  Electron.~J.~Combin. \textbf{9} (2002), Paper 1, 18 pp.

\bibitem{Pok14}
A.~Pokrovskiy, \emph{Partitioning edge-coloured complete graphs into
  monochromatic cycles and paths}, J. Combin. Theory Ser. B \textbf{106}
  (2014), 70--97.

\bibitem{Sar11}
G.N. S{\'a}rk\H{o}zy, \emph{Monochromatic cycle partitions of edge-colored
  graphs}, J. Graph Theory (2011), 57--64.

\bibitem{Sze78}
E.~Szemer{\'e}di, \emph{Regular partitions of graphs}, Colloq. Internat. CNRS
  \textbf{260} (1978), 399--401.

\end{thebibliography}

\end{document}